\definecolor{myCiteColour}{rgb}{0,0,0}
\definecolor{myLinkColour}{rgb}{0,0,0}
\theoremstyle{plain}
   \newtheorem{theorem}{Theorem}[section]
   \newtheorem{proposition}[theorem]{Proposition}
   \newtheorem{lemma}[theorem]{Lemma}
   \newtheorem{corollary}[theorem]{Corollary}
   \newtheorem{conjecture}[theorem]{Conjecture}
\theoremstyle{definition}
   \newtheorem{definition}{Definition}[section]
   \newtheorem{example}{Example}[section]
\theoremstyle{remark}
   \newtheorem{remark}[theorem]{Remark}
\numberwithin{equation}{section}
\def\kk{\kern.2ex\mbox{\raise.5ex\hbox{{\rule{.35em}{.12ex}}}}\kern.2ex}
\newcommand{\lma}{\lambda_{\rm max}}
\newcommand{\lmi}{\lambda_{\rm min}}
\newcommand{\zz}{\mathbf{z}}
\newcommand{\ww}{\mathbf{w}}
\newcommand{\ee}{\mathbf{e}}
\newcommand{\xx}{\mathbf{x}}
\newcommand{\yy}{\mathbf{y}}
\newcommand{\uu}{\mathbf{u}}
\renewcommand{\ss}{\mathbf{s}}
\newcommand{\vv}{\mathbf{v}}
\newcommand{\bl}{\boldsymbol{\lambda}}
\newcommand{\RR}{\mathbb{R}}
\renewcommand{\Im}{{\rm Im}}
\DeclareMathOperator{\conv}{\ast}
\title[]{Spectrahedrality of hyperbolicity cones of multivariate matching polynomials} 
\author{Nima Amini}
\address{Department of Mathematics, Royal Institute of Technology, SE-100 44 Stockholm,
Sweden}
\email{namini@kth.se}
\begin{document}
\begin{abstract}
The generalized Lax conjecture asserts that each hyperbolicity cone is a linear slice of the cone of positive semidefinite matrices. We prove the conjecture for a multivariate generalization of the matching polynomial. This is further extended (albeit in a weaker sense) to a multivariate version of the independence polynomial for simplicial graphs. As an application we give a new proof of the conjecture for elementary symmetric polynomials (originally due to Br\"and\'en). Finally we consider a hyperbolic convolution of determinant polynomials generalizing an identity of Godsil and Gutman. 
\end{abstract}
\maketitle

\renewcommand\theenumi{\roman{enumi}}
\thispagestyle{empty}

\section{Introduction} \label{sec::intro}
A homogeneous polynomial $h(\xx) \in \RR[x_1, \ldots, x_n]$ is \emph{hyperbolic} with respect to a vector 
$\ee \in \RR^n$ if $h(\ee) \neq 0$, and if for all $\xx \in \RR^n$ the univariate polynomial $t \mapsto h(t\ee -\xx)$ has only real zeros. Note that if $h$ is a hyperbolic polynomial of degree $d$, then we may write
\begin{align*}
\displaystyle h(t\mathbf{e} - \mathbf{x}) = h(\mathbf{e}) \prod_{j=1}^d (t - \lambda_j(\mathbf{x})), 
\end{align*} \noindent
where $$\lma(\xx)=\lambda_1(\mathbf{x}) \geq \cdots \geq \lambda_d(\mathbf{x})=\lmi(\xx)$$ are called the \emph{eigenvalues} of $\mathbf{x}$ (with respect to $\mathbf{e}$). The \emph{hyperbolicity cone} of $h$ with respect to $\mathbf{e}$ is the set $\Lambda_{+}(h, \mathbf{e}) = \{ \mathbf{x} \in \mathbb{R}^n : \lmi(\mathbf{x}) \geq 0 \}$. If $\vv \in \Lambda_{+}(h, \ee)$, then $h$ is hyperbolic with respect to $\vv$ and $\Lambda_{+}(h,\vv) = \Lambda_{+}(h, \ee)$. For this reason we usually abbreviate and write $\Lambda_{+}(h)$ if there is no risk for confusion. We denote by $\Lambda_{++}(h)$ the interior of $\Lambda_{+}(h)$. The cone $\Lambda_{++}(h)$ is convex and can be characterized as the connected component of the set $\{\xx \in \RR^n :h(\xx) \neq 0 \}$ containing $\ee$. These are all facts due to G\r{a}rding \cite{Gar}.
\begin{example}\label{detta}
An important example of a hyperbolic polynomial is $\det(X)$, where $X = (x_{ij})_{i,j =1}^n$ is a matrix of variables where we impose $x_{ij} = x_{ji}$. Note that $t \mapsto \det(tI - X)$ where $I = \text{diag}(1,\dots, 1)$, is the characteristic polynomial of a symmetric matrix so it has only real zeros. Hence $\det(X)$ is a hyperbolic polynomial with respect to $I$, and its hyperbolicity cone is the cone of positive semidefinite matrices. 
\end{example} \noindent
Denote the directional derivative of $h(\xx) \in \RR[x_1, \dots, x_n]$ with respect to $\vv = (v_1, \dots, v_n)^T \in \RR^n$ by
$$
\displaystyle D_{\vv}h(\xx) = \sum_{k=1}^n v_k \frac{\partial h}{\partial x_k} (\xx).
$$ 
The following lemma is well-known and essentially follows from the identity $D_{\vv}h(t) = \frac{d}{dt}h(t \vv + \xx) |_{t=0}$ together with Rolle's theorem (see \cite{Gar} \cite{R}).
\begin{lemma} \label{derivhyp}
Let $h$ be a hyperbolic polynomial and let $\vv \in \Lambda_+$ be such that $D_{\vv}h \not \equiv 0$. Then $D_{\vv} h$ is hyperbolic with $\Lambda_{+}(h, \vv) \subseteq \Lambda_{+}(D_{\vv} h, \vv)$.
\end{lemma}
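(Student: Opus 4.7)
The plan is to reduce everything to a one-variable Rolle-type argument applied to $q_{\xx}(t) := h(t\vv - \xx)$ for each fixed $\xx \in \RR^n$. The starting point is the observation that, since $\vv \in \Lambda_+$, one may take $\vv$ itself as a hyperbolic direction for $h$, so $q_{\xx}(t)$ has only real roots for every $\xx$, and its degree is exactly $d = \deg h$. Differentiating through the definition, the identity
\[
\frac{d}{dt}\, h(t\vv - \xx) \;=\; \sum_{k=1}^n v_k\,\frac{\partial h}{\partial x_k}(t\vv - \xx) \;=\; (D_{\vv}h)(t\vv - \xx)
\]
shows that the univariate polynomial $t \mapsto (D_{\vv}h)(t\vv - \xx)$ is literally $q_{\xx}'(t)$.

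First I would verify that $D_{\vv}h$ admits $\vv$ as a hyperbolic direction. Euler's identity applied to the homogeneous polynomial $h$ of degree $d$ gives $(D_{\vv}h)(\vv) = d\,h(\vv)$, which is nonzero whenever $\vv$ lies in the open cone $\Lambda_{++}$ (the interior case is the one of interest; the boundary case can be handled by a density/continuity argument since $D_{\vv}h \not\equiv 0$ is assumed). Then, for any $\xx$, $q_{\xx}(t)$ is a real-rooted polynomial of degree $d$, and by Rolle's theorem its derivative $q_{\xx}'(t)$ is real-rooted of degree $d-1$. Since $q_{\xx}'(t) = (D_{\vv}h)(t\vv - \xx)$, this is precisely the statement that $D_{\vv}h$ is hyperbolic with respect to $\vv$.

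For the cone inclusion, suppose $\xx \in \Lambda_+(h,\vv)$, so that all roots $\lambda_1(\xx) \ge \cdots \ge \lambda_d(\xx) \ge 0$ of $q_{\xx}(t)$ are nonnegative. By Rolle interlacing, the $d-1$ roots $\mu_1(\xx) \ge \cdots \ge \mu_{d-1}(\xx)$ of $q_{\xx}'(t)$ satisfy
\[
\lambda_1(\xx) \;\ge\; \mu_1(\xx) \;\ge\; \lambda_2(\xx) \;\ge\; \cdots \;\ge\; \mu_{d-1}(\xx) \;\ge\; \lambda_d(\xx) \;\ge\; 0,
\]
so in particular $\mu_{d-1}(\xx) \ge 0$. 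But these $\mu_j(\xx)$ are exactly the eigenvalues of $\xx$ with respect to $D_{\vv}h$ and $\vv$, so $\xx \in \Lambda_+(D_{\vv}h,\vv)$, giving the desired containment.

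The only mildly delicate point, and what I would spend a sentence on, is ensuring that $D_{\vv}h$ really has degree $d-1$ (equivalently, does not vanish at $\vv$) when $\vv$ lies on $\partial\Lambda_+$; the Euler identity handles the interior case immediately, and the hypothesis $D_{\vv}h \not\equiv 0$ together with the connectedness characterization of $\Lambda_{++}(h)$ reduces the boundary case to it. Apart from this, the argument is entirely a single application of Rolle's theorem pointwise in $\xx$.
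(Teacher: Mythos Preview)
Your proposal is correct and follows exactly the route the paper indicates: the paper does not supply a proof of this lemma but simply states that it follows from the identity $D_{\vv}h(\xx) = \frac{d}{dt}h(t\vv + \xx)\big|_{t=0}$ together with Rolle's theorem (citing G\r{a}rding and Renegar), which is precisely the argument you carry out. Your attention to the boundary case $\vv \in \partial\Lambda_+$ via Euler's identity and a limiting argument is, if anything, more careful than the paper's own treatment.
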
 \noindent
A class of polynomials which is intimately connected to hyperbolic polynomials is the class of stable polynomials.
A polynomial $P(\xx) \in \mathbb{C}[x_1, \dots, x_n]$ is \emph{stable} if $P(z_1,\ldots,z_n) \neq 0$ whenever $\Im(z_j)>0$ for all $1\leq j \leq n$.
Hyperbolic and stable polynomials are related as follows, see \cite[Prop. 1.1]{PLMS}. 
\begin{lemma}
\label{hypbas}
Let $P \in \mathbb{R}[x_1, \dots, x_n]$ be a homogenous polynomial. Then $P$ is stable if and only if $P$ is hyperbolic with $\mathbb{R}_+^n \subseteq \Lambda_+(P)$.
\end{lemma}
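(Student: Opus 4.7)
The plan is to handle the two implications separately, using the homogeneity of $P$ to bridge complex evaluations (stability) with real-rooted univariate restrictions (hyperbolicity). Throughout I will take $\ee = \one \in \RR^n$ (the all-ones vector) as the natural direction of hyperbolicity.

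For the forward implication, assume $P$ is stable. Two observations do the bulk of the work: first, for any $\xx \in \RR^n$ and any $t \in \CC$ with $\Im(t) > 0$, the vector $t\one - \xx$ has strictly positive imaginary parts, so stability forces $P(t\one-\xx) \neq 0$; second, homogeneity combined with stability gives $P(\xx) \neq 0$ for every $\xx$ in the open positive orthant, via $P(i\xx) = i^d P(\xx)$ and the fact that $i\xx$ has all positive imaginary parts. From the first observation, together with the reality of the coefficients of $P$ (which rules out roots in the open lower half-plane), every univariate restriction $t \mapsto P(t\one - \xx)$ is real-rooted, so $P$ is hyperbolic with respect to $\one$. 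The second observation exhibits the open positive orthant as a connected subset of $\{\xx : P(\xx) \neq 0\}$ containing $\one$, so G\r{a}rding's characterization of $\Lambda_{++}(P)$ as the connected component of that set containing the hyperbolicity direction yields that the open orthant lies in $\Lambda_{++}(P)$; passing to closures gives $\RR_+^n \subseteq \Lambda_+(P)$.

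For the reverse implication, assume $P$ is hyperbolic with $\RR_+^n \subseteq \Lambda_+(P)$. Since the open positive orthant is an open subset of $\Lambda_+(P)$, it lies in the interior $\Lambda_{++}(P)$, so any strictly positive $\yy$ serves as a direction of hyperbolicity. Given $\zz = \xx + i\yy$ with $\xx \in \RR^n$ and $\yy$ strictly positive, I would write the univariate factorization
\begin{equation*}
P(t\yy + \xx) \;=\; P(\yy)\prod_{j=1}^{d}(t - \mu_j), \qquad \mu_j \in \RR,
\end{equation*}
coming from hyperbolicity in the direction $\yy$, and then specialize at $t = i$ to obtain $P(\zz) = P(\yy)\prod_{j}(i-\mu_j) \neq 0$. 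This yields stability.

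The only subtle point, rather than a deep obstacle, is reconciling the strict inequality $\Im(z_j)>0$ required by stability with the non-strict inclusion $\RR_+^n \subseteq \Lambda_+(P)$ appearing in the hypothesis; this is handled by exploiting openness of the positive orthant to land inside $\Lambda_{++}(P)$, together with G\r{a}rding's connected-component description which ensures that every strictly positive vector is a legitimate direction of hyperbolicity. Everything else reduces to the homogeneity trick $P(i\xx) = i^d P(\xx)$ and the real-rootedness of hyperbolic restrictions.
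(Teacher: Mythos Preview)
Your argument is correct and is essentially the standard proof of this equivalence. Note, however, that the paper does not actually supply its own proof of Lemma~\ref{hypbas}: it simply quotes the result from \cite[Prop.~1.1]{PLMS}. So there is no in-paper proof to compare against; what you have written is precisely the kind of argument one finds in the cited reference, and it is complete as stated.
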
 \noindent
The next theorem which follows (see \cite{LPR}) from a theorem of Helton and Vinnikov \cite{HV} proved the Lax conjecture (after Peter Lax $1958$ \cite{Lax}). 
\begin{theorem}[Helton-Vinnikov \cite{HV}] \label{heltonvinnikov}
Suppose that $h(x,y,z)$ is of degree $d$ and hyperbolic with respect to $e = (e_1,e_2,e_3)^T$. Suppose further that $h$ is normalized such that $h(e) = 1$. Then there are symmetric $d \times d$ matrices $A, B, C$ such that $e_1A+e_2B+ e_3C = I$ and
\begin{align*}
\displaystyle h(x,y,z) = \det(xA + yB + zC).
\end{align*}
\end{theorem}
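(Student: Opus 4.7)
The plan is to follow the algebraic-geometric strategy of Helton and Vinnikov, translating the determinantal representation problem into a question about line bundles on the projective plane curve $C = \{h=0\} \subset \PP^2$.

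First I would reduce to the case where $C$ is smooth and irreducible. By a Nuij-type perturbation, any hyperbolic $h$ can be approximated within its hyperbolicity cone by hyperbolic polynomials whose zero sets define smooth curves; a continuity and compactness argument on the space of $I$-normalized symmetric triples $(A,B,C)$ would then permit passing to the limit, so it suffices to treat the smooth case.

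Next I would invoke Dixon's classical correspondence: for a smooth plane curve of degree $d$, symmetric determinantal representations $h = \det(xA+yB+zC)$ (up to simultaneous conjugation) correspond bijectively to \emph{theta characteristics} on $C$, that is, line bundles $L$ satisfying $L^{\otimes 2} \cong K_C$ and $H^0(C,L) = 0$. Over $\CC$ there are $2^{2g}$ such line bundles, where $g = \binom{d-1}{2}$ is the genus. The task thus reduces to exhibiting a theta characteristic which is \emph{real} and for which the associated symmetric pencil is furthermore \emph{definite} at $\ee$.

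The main obstacle — and the deep content of the theorem — lies in this last requirement. Hyperbolicity with respect to $\ee$ forces $C(\RR)$ to consist of $\lfloor d/2 \rfloor$ nested ovals (plus a pseudo-line when $d$ is odd), with $\ee$ in the innermost component. This nested topology pins down a distinguished connected component of the real Picard variety $\mathrm{Pic}^{g-1}(C)(\RR)$ and hence a preferred family of real theta characteristics. Following Vinnikov, I would then construct $A,B,C$ explicitly from theta functions on the Jacobian of $C$ attached to the chosen characteristic, using Riemann--Roch together with the monodromy of these theta functions around the real ovals to establish that the resulting Hermitian pencil is positive definite at $\ee$. Rescaling so that $e_1A + e_2B + e_3C = I$ then yields the desired matrices. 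Verifying the sign of the pencil, rather than merely producing some real symmetric representation, is the technical heart of the argument and is where the theory of real Jacobians enters essentially.
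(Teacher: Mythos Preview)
The paper does not prove this theorem at all: it is quoted as a known result of Helton and Vinnikov \cite{HV} (via \cite{LPR}) and is used only as background for the discussion of the Lax conjecture and its generalization. There is therefore no ``paper's own proof'' to compare your proposal against.

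That said, your outline is a faithful high-level sketch of the actual Helton--Vinnikov argument: the reduction to smooth curves by perturbation, Dixon's correspondence between symmetric determinantal representations and ineffective theta characteristics, and the use of the nested-oval topology forced by hyperbolicity to pick out a real theta characteristic for which the resulting pencil is definite at $\ee$. The only caution is that each of the steps you describe as routine (the limiting/compactness argument in the singular case, and especially the sign analysis via theta functions on the real Jacobian) is genuinely delicate in the original paper; your proposal correctly identifies where the depth lies but should not be read as a self-contained proof.
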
 \noindent
\begin{remark}
The exact analogue of Theorem \ref{heltonvinnikov} fails for $n>3$ variables. This may be seen by comparing dimensions. The set of polynomials on $\mathbb{R}^n$ of the form $\det(x_1A_1 + \cdots x_nA_n)$ with $A_i$ symmetric for $1 \leq i \leq n$, has dimension at most $n \binom{d}{2}$ whereas the set of hyperbolic polynomials on $\mathbb{R}^n$ has dimension $\binom{n+d-1}{d}$.   
\end{remark} \noindent
A convex cone in $\RR^n$ is \emph{spectrahedral} if it is of the form
\begin{align*}
\displaystyle \left \{ \mathbf{x} \in \mathbb{R}^n : \sum_{i = 1}^n x_i A_i \thickspace \text{ is positive semidefinite} \right \}
\end{align*} \noindent
where $A_i$, $i = 1, \dots, n$ are symmetric matrices such that there exists a vector $(y_1, \dots, y_n) \in \mathbb{R}^n$ with $\sum_{i=1}^n y_i A_i$ positive definite. It is easy to see that spectrahedral cones are hyperbolicity cones. A major open question asks if the converse is true. 
\begin{conjecture}[Generalized Lax conjecture \cite{HV,Vin}] \label{glc}
All hyperbolicity cones are spectrahedral.
\end{conjecture}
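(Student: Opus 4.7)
The plan is to attempt to produce, for each hyperbolic polynomial $h(\xx)$ with $h(\ee) \neq 0$, a symmetric linear matrix pencil $L(\xx) = \sum_{i=1}^n x_i A_i$ with $L(\ee) \succ 0$ such that
\[
\Lambda_+(h,\ee) = \{\xx \in \RR^n : L(\xx) \succeq 0\}.
\]
The most natural first attempt would be to realize $h$ itself as $\det L$, but the dimension count recorded in the remark after Theorem \ref{heltonvinnikov} rules this out once $n \geq 4$. So my first step would be to weaken the target: rather than representing $h$ as a determinant, I would look for an auxiliary polynomial $q$, hyperbolic with respect to $\ee$, such that $qh = \det L$ for some symmetric linear pencil $L$ and such that $\Lambda_+(q,\ee) \supseteq \Lambda_+(h,\ee)$. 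Since hyperbolicity cones are closed under (nonempty) intersection with other hyperbolicity cones sharing a common interior point, the identity $\Lambda_+(qh,\ee) = \Lambda_+(q,\ee) \cap \Lambda_+(h,\ee) = \Lambda_+(h,\ee)$ would then realize the conjectured cone as the intersection of a spectrahedron with itself, giving spectrahedrality.

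My second step would be to attack the construction of $q$ by restriction and induction. For any generic two-dimensional subspace $V \subseteq \RR^n$ through $\ee$, the restriction $h|_V$ is a binary form with only real roots and, after homogenization in an auxiliary variable, falls under Theorem \ref{heltonvinnikov}; so planar slices admit honest determinantal representations. Threading these slice-wise representations into a global representation of $qh$ is where the real work lies. A complementary tactic uses Lemma \ref{derivhyp}: the directional derivatives $D_\ee^k h$ generate a nested sequence of hyperbolicity cones expanding from $\{\ee\}$ out to $\Lambda_+(h)$, and one could try to construct $q$ and $L$ by descending this sequence, taking advantage of the fact that derivatives typically have smaller degree while preserving the ambient geometry.

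The step I expect to be the main obstacle is producing the certificate $q$ with the required cone containment. Imposing $\Lambda_+(q,\ee) \supseteq \Lambda_+(h,\ee)$ is extremely rigid when combined with the demand that $qh$ be the determinant of a symmetric linear pencil: the condition forces real-algebraic constraints on the hypersurface $\{h=0\}$ (generalizations of the theta-characteristic and real-fibredness conditions used by Vinnikov in the $n=3$ case), and lifting these from curves to higher-dimensional real hypersurfaces is precisely where all known general methods break down. Even constructing $q$ abstractly, without controlling its cone, is nontrivial; the sharper cone-containment requirement appears to need a genuinely new idea, possibly coming from the theory of stable polynomials and Lemma \ref{hypbas}, or from interlacing/convolution identities of the kind the rest of this paper develops.

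Finally, I would be honest that Conjecture \ref{glc} is a well-known open problem and that no existing technique delivers a full proof; what one can realistically hope for, and what the body of the present paper pursues, is to verify the conjecture on explicit families of hyperbolic polynomials (multivariate matching polynomials, elementary symmetric polynomials) and to establish weaker spectrahedral containments in more general situations. Accordingly, the role of the statement here is to fix terminology and motivate the targeted results that follow, rather than to be resolved in place.
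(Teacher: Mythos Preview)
Your assessment is correct: the paper does not prove Conjecture~\ref{glc} in general, and indeed treats it as an open problem---immediately after stating it, the paper lists the known special cases and weaker/stronger variants, then devotes the remainder to verifying it for the explicit families you name (multivariate matching polynomials, elementary symmetric polynomials, and independence polynomials of simplicial graphs). Your final paragraph captures exactly the role the statement plays in the paper, so there is nothing to compare at the level of a ``proof of the conjecture.''
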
 \noindent
\begin{remark}
An important consequence of Conjecture \ref{glc} in the field of optimization is that hyperbolic programming \cite {R} is the same as semidefinite programming.
\end{remark} \noindent
We may reformulate Conjecture \ref{glc} as follows, see \cite{HV,Vin}. The hyperbolicity cone of $h(\mathbf{x})$ with respect to $\mathbf{e} = (e_1, \dots ,e_n)$ is spectrahedral if there is a homogeneous polynomial $q(\mathbf{x})$ and real symmetric matrices $A_1, \ldots, A_n$ of the same size such that
\begin{align}\label{qhd}
\displaystyle q(\mathbf{x})h(\mathbf{x}) = \det \left ( \sum_{i = 1}^n x_i A_i \right )
\end{align} \noindent
where $\Lambda_{++}(h, \mathbf{e}) \subseteq \Lambda_{++}(q, \mathbf{e})$ and $\sum_{i=1}^n e_iA_i$ is positive definite. 
If we can choose $q(\xx) \equiv 1$, then we say that $h(\xx)$ admits a \textit{definite determinantal representation}.
\begin{itemize}
\item Conjecture \ref{glc} is true for $n=3$ by Theorem \ref{heltonvinnikov},
\item Conjecture \ref{glc} is true for homogeneous cones \cite{Chua}, i.e., cones for which the automorphism group acts transitively on its interior, 
\item Conjecture \ref{glc} is true for quadratic polynomials, see e.g. \cite{NT}, 
\item Conjecture \ref{glc} is true for elementary symmetric polynomials, see \cite{B2},
\item Weaker versions of Conjecture \ref{glc} are true for smooth hyperbolic polynomials, see \cite{Kum2, NS}. 
\item Stronger algebraic versions of Conjecture \ref{glc} are false, see \cite{AB, B}. 
\end{itemize} \noindent
The paper is organized as follows. In Section \ref{sec::matching} we prove Conjecture \ref{glc} for a multivariate generalization of the matching polynomial (Theorem \ref{matchconespec}). We also show that this implies Conjecture \ref{glc} for elementary symmetric polynomials (Theorem \ref{elemspec}). Our result may therefore be viewed as a generalization of \cite{B2}. In Section \ref{sec::indep} we generalize further to a multivariate version of the independence polynomial using a recent divisibility relation of Leake and Ryder \cite{LR} (Theorem \ref{indepspec}). The variables of the homogenized independence polynomial do not fully correspond combinatorially (under the line graph operation) to the more refined homogeneous matching polynomial. The restriction of Theorem \ref{indepspec} to line graphs is therefore weaker than Theorem \ref{matchconespec}.  
Finally, in Section \ref{sec::conv} we consider a hyperbolic convolution of determinant polynomials generalizing an identity of Godsil and Gutman \cite{GG} which asserts that the expected characteristic polynomial of a random signing of the adjacency matrix of a graph is equal to its matching polynomial.

Unless stated otherwise, $G = (V(G),E(G))$ denotes a simple undirected graph.
We shall adopt the following notational conventions.
\begin{itemize}
\item $\text{Sym}(S)$ denotes the symmetric group on the set $S$. Write $\mathfrak{S}_n = \text{Sym}([n])$.
\item $N_G[v]$ (resp. $N_G(v)$) denotes the closed (resp. open) neighbourhood of $v$.
\item If $S \subseteq V(G)$, then $G[S]$ denotes the subgraph of $G$ induced by $S$.
\item $G \sqcup H$ denotes the disjoint union of the graphs $G$ and $H$.
\item $\RR^S = \{ (a_s)_{s \in S} : a_s \in \RR \} \cong \RR^{|S|}$.
\item $\RR^G = \RR^{V(G)} \times \RR^{E(G)}$.
\end{itemize}

\section{Hyperbolicity cones of multivariate matching polynomials} \label{sec::matching} \noindent
A \textit{$k$-matching} in $G$ is a subset $M \subseteq E(G)$ of $k$ edges, no two of which have a vertex in common. Let $\mathcal{M}(G)$ denote the set of all matchings in $G$ and let $m(G,k)$ denote the number of $k$-matchings in $G$. By convention $m(G,0) = 1$. We denote by $V(M)$ the set of vertices contained in the matching $M$.
If $|V(M)| = |V(G)|$, then we call $M$ a \textit{perfect matching}. The (univariate) \textit{matching polynomial} is defined by
\begin{align*}
\displaystyle \mu(G, t)= \sum_{k \geq 0} (-1)^k m(G,k) t^{|V(G)|-2k}.
\end{align*} \noindent
Note that this is indeed a polynomial since $m(G,k) = 0$ for $k > \frac{|V(G)|}{2}$. Heilmann and Lieb \cite{HL} studied the following multivariate version of the matching polynomial with variables $\mathbf{x} = (x_i)_{i \in V}$ and non-negative weights $\boldsymbol{\lambda} = (\lambda_e)_{e \in E}$,
\begin{align*}
\displaystyle \mu_{\boldsymbol{\lambda}}(G, \mathbf{x}) = \sum_{M \in \mathcal{M}(G)} (-1)^{|M|} \prod_{ij \in M} \lambda_{ij} x_i x_j.
\end{align*} \noindent 

\begin{remark} \label{specrem}
Note that $\displaystyle t^{|V(G)|} \mu_{\mathbf{1}}(G, t^{-1} \mathbf{1}) = \mu(G,t)$, where $\mathbf{1} = (1, \dots, 1)$.
\end{remark} \noindent
\begin{theorem}[Heilmann-Lieb \cite{HL}]
\label{heilmannlieb} \noindent \newline
If $\boldsymbol{\lambda} = (\lambda_e)_{e \in E}$ is a sequence of non-negative edge weights, then $\mu_{\boldsymbol{\lambda}}(G, \mathbf{x})$ is stable.
\end{theorem}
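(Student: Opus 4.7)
The plan is to prove stability by induction on $|V(G)|$, with the base case $|V(G)|\le 1$ being immediate since in that range $\mu_{\boldsymbol{\lambda}}(G,\mathbf{x})=1$, which is trivially stable.

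For the inductive step, fix any vertex $v\in V(G)$. Partitioning the matchings of $G$ according to whether they cover $v$ (necessarily through a unique edge $uv$) or leave $v$ uncovered yields the vertex-deletion recurrence
\begin{equation*}
\mu_{\boldsymbol{\lambda}}(G,\mathbf{x})=\mu_{\boldsymbol{\lambda}}(G-v,\mathbf{x})-x_v\sum_{u\in N_G(v)}\lambda_{uv}\,x_u\,\mu_{\boldsymbol{\lambda}}(G-\{u,v\},\mathbf{x}).
\end{equation*}
Writing the right-hand side as $A(\mathbf{x}')-x_vB(\mathbf{x}')$ with $\mathbf{x}'=(x_w)_{w\ne v}$, the induction hypothesis gives stability of $A$ and of each polynomial $\mu_{\boldsymbol{\lambda}}(G-\{u,v\},\mathbf{x}')$ appearing in $B$.

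To conclude, fix any $\mathbf{z}'\in\mathbb{H}^{V\setminus v}$ and examine $A(\mathbf{z}')-x_vB(\mathbf{z}')$, which is affine in $x_v$. Stability of $A$ forces $A(\mathbf{z}')\ne 0$; if $B(\mathbf{z}')=0$ there is no zero in $x_v$, and otherwise the unique zero is $x_v=A(\mathbf{z}')/B(\mathbf{z}')$, which must be shown to have non-positive imaginary part. Thus the remaining task reduces to the auxiliary half-plane claim that for every $v$, the rational function $A/B$ maps $\mathbb{H}^{V\setminus v}$ into $\overline{\mathbb{H}^-}\cup\{\infty\}$.

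The principal obstacle is establishing this half-plane claim, which I expect to be the technical heart of the argument. My strategy is to strengthen the induction: prove simultaneously that $\mu_{\boldsymbol{\lambda}}(G,\mathbf{x})$ is stable and that for every vertex $w$ the ratio $\mu_{\boldsymbol{\lambda}}(G-w,\mathbf{x}')/B_w(\mathbf{x}')$ is anti-Herglotz on $\mathbb{H}^{V\setminus w}$, where the identity $B_w=-\partial_{x_w}\mu_{\boldsymbol{\lambda}}(G,\mathbf{x})$ follows from the multi-affineness of $\mu_{\boldsymbol{\lambda}}$ in $x_w$. Differentiating the recurrence expresses $B_w$ as a combination of smaller matching polynomials, so the sharpened inductive hypothesis applies. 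Alternatively, one might bypass the explicit Herglotz bookkeeping by producing $\mu_{\boldsymbol{\lambda}}(G,\mathbf{x})$ as a sequence of Asano-type contractions of the manifestly stable starting polynomial $\prod_{e=ij\in E}(1-\lambda_e x_ix_jy_e)$ in the auxiliary variables $y_e$. The delicate point either way is controlling boundary behavior where $A$ or $B$ vanishes and ensuring that the half-plane condition is preserved under the sum over $u\in N_G(v)$ that defines $B$.
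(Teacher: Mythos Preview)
Your inductive approach via the vertex-deletion recurrence, strengthened by an anti-Herglotz hypothesis on the ratios $\mu_{\boldsymbol{\lambda}}(G-v)/\mu_{\boldsymbol{\lambda}}(G)$, is the classical Heilmann--Lieb route and can be made to work. The key observation you are circling is that from the recurrence one obtains
\[
\frac{B}{A}=\sum_{u\in N(v)}\lambda_{uv}\,x_u\,\frac{\mu_{\boldsymbol{\lambda}}(G-\{u,v\})}{\mu_{\boldsymbol{\lambda}}(G-v)},
\]
and a strengthened inductive hypothesis of the form ``$x_u\cdot\mu_{\boldsymbol{\lambda}}(H-u)/\mu_{\boldsymbol{\lambda}}(H)$ has positive imaginary part on $\mathbb{H}^{V(H)}$'' forces each summand (and hence $B/A$) into the closed upper half-plane, giving $\Im(A/B)\le 0$ as required. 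So the plan is sound, though the bookkeeping you flag as ``delicate'' is real and takes a page or two to execute cleanly.

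The paper, however, takes a completely different and much shorter route: it simply observes that
\[
\mu_{\boldsymbol{\lambda}}(G,\mathbf{x})=\mathrm{MAP}\Bigl(\prod_{e=ij\in E(G)}(1-\lambda_e x_i x_j)\Bigr),
\]
where $\mathrm{MAP}$ is the linear operator extracting the multiaffine part. Each factor $1-\lambda_e x_i x_j$ is stable for $\lambda_e\ge 0$, products of stable polynomials are stable, and $\mathrm{MAP}$ preserves stability by a theorem of Borcea--Br\"and\'en. This is a one-line proof modulo that black box. Your ``alternative'' suggestion involving $\prod_e(1-\lambda_e x_i x_j y_e)$ and Asano-type moves is groping toward this same idea, but the right operation is not an Asano contraction in the auxiliary $y_e$; it is projection to the multiaffine part in the $x_i$. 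The trade-off is clear: the paper's argument is vastly shorter but imports a nontrivial stability-preservation theorem, while your inductive argument is self-contained and closer to the original 1972 proof, at the cost of the Herglotz bookkeeping.
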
 \noindent
\begin{remark}
A quick way to see Theorem \ref{heilmannlieb} is to observe that
\begin{align*}
\displaystyle \text{MAP} \left ( \prod_{e = (i,j) \in E(G)} (1- \lambda_{e} x_i x_j) \right ) = \mu_{\boldsymbol{\lambda}}(G, \mathbf{x})
\end{align*} \noindent
where $\text{MAP}: \mathbb{C}[z_1, \dots, z_n] \to \mathbb{C}[z_1, \dots, z_n]$ is the stability preserving linear map taking a multivariate polynomial to its multiaffine part (see \cite{BB}). Since real stable univariate polynomials are real-rooted the Heilmann-Lieb theorem (together with Remark \ref{specrem}) implies the real-rootedness of $\mu(G, t)$.
\end{remark} \noindent
We will consider the following homogeneous multivariate version of the matching polynomial
\begin{definition}
Let $\mathbf{x} = (x_v)_{v \in V}$ and $\mathbf{w} = (w_e)_{e \in  E}$ be indeterminates.
Define the \textit{homogeneous multivariate matching polynomial} $\mu(G, \mathbf{x} \oplus \mathbf{w} ) \in \mathbb{R}[\mathbf{x}, \mathbf{w}]$ by
\begin{align*}
\displaystyle \mu(G, \mathbf{x} \oplus \mathbf{w} ) = \sum_{M \in \mathcal{M}(G)} (-1)^{|M|} \prod_{v \not \in V(M)} x_v \prod_{e \in M} w_e^2.
\end{align*} \noindent
\end{definition} \noindent

\begin{figure}
    \centering
\begin{tikzpicture}

\node[label=right:{ \scriptsize $1$}, draw=black,fill,circle,inner sep=0pt,minimum size=3pt] (x1) at (2,0) {};

\node[label=left:{\scriptsize $4$}, draw=black,fill,circle,inner sep=0pt,minimum size=3pt] (x4) at (-1,0) {};

\node[label=below left:{\scriptsize $3$}, draw=black,fill,circle,inner sep=0pt,minimum size=3pt] (x3) at (-2,-2) {};

\node[label=below right:{\scriptsize $2$}, draw=black,fill,circle,inner sep=0pt,minimum size=3pt] (x2) at (1,-2) {};

\node (e) at (0.3, 0.2) {\scriptsize $e$};
\node (d) at (-1.8, -1) {\scriptsize $d$};
\node (b) at (-0.5, -2.3) {\scriptsize $b$};
\node (a) at (1.7, -1) {\scriptsize $a$};
\node (c) at (0.2, -1.2) {\scriptsize $c$};

\draw (x4) -- (x1);
\draw (x4) -- (x3);
\draw (x3) -- (x2);
\draw (x2) -- (x1);
\draw (x3) -- (x1);

\end{tikzpicture} 
\caption{•} \label{MatchingExFig}
\end{figure}
\begin{example} 
\label{MatchignEx}
The homogeneous multivariate matching polynomial of the graph $G$ in Figure \ref{MatchingExFig} is given by
$$
\mu(G, \xx \oplus \ww) = x_1x_2x_3x_4 - x_3x_4w_a^2 - x_1x_4w_b^2 - x_2x_4w_c^2 - x_1x_2w_d^2 - x_2x_3w_e^2 + w_a^2w_d^2 + w_b^2w_e^2.
$$
\end{example}
\begin{remark}
Note that $\displaystyle \mu(G, t \mathbf{1} \oplus \mathbf{1}) = \mu(G, t)$ and that the perfect matching polynomial is given by $\mu(G, \mathbf{0} \oplus \mathbf{w})$.
\end{remark} \noindent
In this section we prove Conjecture \ref{glc} in the affirmative for the polynomials $\mu(G, \mathbf{x} \oplus \mathbf{w} )$. We first assert that $\mu(G, \mathbf{x} \oplus \mathbf{w} )$ is indeed a hyperbolic polynomial.  
\begin{lemma}
The polynomial $\mu(G, \mathbf{x} \oplus \mathbf{w} )$ is hyperbolic with respect to $\mathbf{e} = \mathbf{1} \oplus \mathbf{0}$.
\end{lemma}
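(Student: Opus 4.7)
The plan is to reduce the lemma to the Heilmann--Lieb stability theorem (Theorem~\ref{heilmannlieb}) via a reciprocation trick. The starting observation is the rational identity
\begin{equation*}
\mu(G, \zz \oplus \ww) \;=\; \Bigl(\prod_{v \in V(G)} z_v\Bigr)\, \mu_{\ww^2}\!\bigl(G,\, (z_v^{-1})_{v \in V(G)}\bigr),
\end{equation*}
which follows by matching the summands indexed by $M \in \MM(G)$: the global factor $\prod_v z_v$ cancels the $\prod_{v \in V(M)} z_v^{-1}$ arising from $x_v \mapsto z_v^{-1}$ in $\mu_{\ww^2}$, leaving $\prod_{v \notin V(M)} z_v$, while the edge contribution $\prod_{ij \in M} w_{ij}^2$ is the same on both sides.

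Using this, I would show that for each fixed $\ww \in \RR^{E(G)}$ the polynomial $F_\ww(\zz) := \mu(G, \zz \oplus \ww) \in \RR[\zz]$ is stable in $\zz$. Suppose $\Im(z_v) > 0$ for all $v$; then $z_v \neq 0$ and $\Im(z_v^{-1}) = -\Im(z_v)/|z_v|^2 < 0$. By Theorem~\ref{heilmannlieb} applied to the non-negative edge weights $\lambda_e = w_e^2$, the polynomial $\mu_{\ww^2}(G, \cdot)$ is stable, i.e.\ nonzero on the upper polydisc, and because it has real coefficients the conjugation symmetry $\overline{\mu_{\ww^2}(G, \xx)} = \mu_{\ww^2}(G, \bar{\xx})$ forces it also to be nonzero on the lower polydisc. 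Therefore $\mu_{\ww^2}(G, (z_v^{-1})_v) \neq 0$, and combined with $\prod_v z_v \neq 0$ the identity above yields $F_\ww(\zz) \neq 0$. The polynomial $F_\ww$ is not identically zero, since the summand indexed by $M = \emptyset$ contributes $\prod_v z_v$ with coefficient $+1$.

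To finish, observe that each $w_e$ enters $\mu(G, \xx \oplus \ww)$ only through $w_e^2$, so for any $\xx_0 \in \RR^{V(G)}$ and $\ww_0 \in \RR^{E(G)}$,
\begin{equation*}
\mu\bigl(G,\, t\ee - (\xx_0 \oplus \ww_0)\bigr) \;=\; \mu\bigl(G,\, (t\one - \xx_0) \oplus (-\ww_0)\bigr) \;=\; F_{\ww_0}(t\one - \xx_0).
\end{equation*}
A root $t_0$ with $\Im(t_0) > 0$ would place $t_0\one - \xx_0$ in the upper polydisc, contradicting stability of $F_{\ww_0}$; similarly $\Im(t_0) < 0$ is ruled out by real coefficients. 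Hence this univariate polynomial is real-rooted, and since $\mu(G, \ee) = \mu(G, \one \oplus \mathbf{0}) = 1 \neq 0$ (the factor $\prod_{e \in M} 0^2$ kills every $M \neq \emptyset$), we conclude that $\mu(G, \xx \oplus \ww)$ is hyperbolic with respect to $\ee = \one \oplus \mathbf{0}$. The only potentially delicate point is justifying the reciprocation step, but this is handled cleanly by the conjugation symmetry of real stable polynomials; beyond invoking Heilmann--Lieb no further technology is required.
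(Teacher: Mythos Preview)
Your proof is correct and follows essentially the same approach as the paper: both establish the rational identity $\mu(G,\zz\oplus\ww)=\bigl(\prod_v z_v\bigr)\mu_{\ww^2}(G,(z_v^{-1})_v)$ and then invoke the Heilmann--Lieb theorem to deduce real-rootedness of $t\mapsto \mu(G,t\ee-\xx\oplus\ww)$. Your write-up is in fact more careful than the paper's, which asserts real-rootedness of the right-hand side without spelling out the conjugation-symmetry step; your intermediate claim that $F_\ww(\zz)$ is itself stable makes this transparent.
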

\begin{proof}
Clearly $\mu(G, \mathbf{1} \oplus \mathbf{0} ) = 1 \neq 0$.
Let $\mathbf{x} \oplus \mathbf{w} \in \RR^G$ and $\lambda_e = w_{e}^2$ for all $e \in  E(G)$. Then
\begin{align*}
\displaystyle \mu(G, t \mathbf{e} - \mathbf{x} \oplus \mathbf{w} ) = \left ( \prod_{v \in V} (t - x_v) \right ) \mu_{\boldsymbol{\lambda}}(G, (t\mathbf{1} - \mathbf{x})^{-1}).
\end{align*} \noindent
Since $\mu_{\boldsymbol{\lambda}}(G, \mathbf{x})$ is real stable by Heilmann-Lieb theorem it follows that the right hand side is real-rooted. Hence $\mu(G, \mathbf{x} \oplus \mathbf{w} )$ is hyperbolic with respect to $\mathbf{e} = \mathbf{1} \oplus \mathbf{0}$.

\end{proof} \noindent
Analogues of the standard recursions for the univariate matching polynomial (see \cite[Thm 1.1]{God}) also hold for $\mu(G, \mathbf{x} \oplus \mathbf{w} )$. In particular the following recursion is used frequently so we give details.
\begin{lemma}
\label{matchrec}
Let $u \in V(G)$. Then the homogeneous multivariate matching polynomial satisfies the recursion 
\begin{align*}
\displaystyle \mu(G, \mathbf{x} \oplus \mathbf{w} ) = x_u \mu(G \setminus u, \mathbf{x} \oplus \mathbf{w} ) - \sum_{v \in N(u)} w_{uv}^2 \mu((G \setminus u) \setminus v, \mathbf{x} \oplus \mathbf{w} ).
\end{align*} \noindent
\end{lemma}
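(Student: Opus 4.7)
The plan is a direct combinatorial proof: split the defining sum of $\mu(G, \mathbf{x} \oplus \mathbf{w})$ according to whether the distinguished vertex $u$ is covered by the matching $M$, and match each piece with a term on the right-hand side.

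First I would write
\begin{align*}
\mu(G, \mathbf{x} \oplus \mathbf{w}) = \sum_{\substack{M \in \MM(G) \\ u \notin V(M)}} (-1)^{|M|} \prod_{v \notin V(M)} x_v \prod_{e \in M} w_e^2 + \sum_{\substack{M \in \MM(G) \\ u \in V(M)}} (-1)^{|M|} \prod_{v \notin V(M)} x_v \prod_{e \in M} w_e^2.
\end{align*}
For the first sum, the matchings $M$ of $G$ with $u \notin V(M)$ are precisely the matchings of $G \setminus u$, and since $u \notin V(M)$ the factor $x_u$ appears in $\prod_{v \notin V(M)} x_v$; pulling it out yields $x_u \, \mu(G \setminus u, \mathbf{x} \oplus \mathbf{w})$.

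For the second sum, any matching $M \in \MM(G)$ with $u \in V(M)$ contains a unique edge incident to $u$, say $uv$ with $v \in N(u)$. The assignment $M \mapsto (v, M \setminus \{uv\})$ is a bijection onto pairs $(v, M')$ with $v \in N(u)$ and $M' \in \MM((G \setminus u) \setminus v)$. Under this bijection $|M| = |M'| + 1$, the factor $w_{uv}^2$ is separated from $\prod_{e \in M'} w_e^2$, and the set of uncovered vertices of $M$ in $G$ coincides with the set of uncovered vertices of $M'$ in $(G \setminus u) \setminus v$. Collecting the contributions gives $-\sum_{v \in N(u)} w_{uv}^2 \, \mu((G \setminus u) \setminus v, \mathbf{x} \oplus \mathbf{w})$, and adding the two pieces gives the claimed identity.

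There is no real obstacle here; the proof is elementary bookkeeping. The only thing worth being careful about is that the monomial $\prod_{v \notin V(M)} x_v$ in the definition of $\mu(G, \mathbf{x} \oplus \mathbf{w})$ is taken over the uncovered vertices of the \emph{ambient} graph in which $M$ is considered a matching, so one must confirm that the uncovered vertices of $M'$ in $(G \setminus u) \setminus v$ really do match the uncovered vertices of $M$ in $G$ once $u$ and $v$ are removed from both the graph and the covered set. This is immediate but is the one point requiring explicit verification.
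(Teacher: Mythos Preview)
Your proof is correct and follows essentially the same approach as the paper: both partition the matchings of $G$ according to whether $u \in V(M)$ or $u \notin V(M)$, identify the first piece with $x_u\,\mu(G\setminus u,\mathbf{x}\oplus\mathbf{w})$, and decompose the second by the unique edge $uv\in M$ to obtain the sum over $v\in N(u)$. The paper simply writes the same bookkeeping a bit more explicitly via the shorthand $f_G(M)=\prod_{v\notin V(M)}x_v\prod_{e\in M}w_e^2$.
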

\begin{proof} 
The identity follows by partitioning the matchings $M \in \mathcal{M}(G)$ into two parts depending on whether $u \in V(M)$ or $u \not \in V(M)$. 
Let $f_G(M) = \prod_{v \not \in V(M)} x_v \prod_{e \in M} w_e^2$. Then
\begin{align*}
\displaystyle \mu(G, \mathbf{x} \oplus \mathbf{w}) &= \sum_{M \in \mathcal{M}(G)} (-1)^{|M|} f_G(M) \\ &= \sum_{\substack{M \in \mathcal{M}(G) \\ u \not \in V(M) }} (-1)^{|M|} f_{G}(M) + \sum_{\substack{ M \in \mathcal{M}(G) \\ u \in V(M)} } (-1)^{|M|} f_G(M) \\ &= x_u \sum_{M \in M(G \setminus u)} (-1)^{|M|} f_{G \setminus u}(M) + \sum_{v \in N(u)} \sum_{\substack{M \in \mathcal{M}(G) \\ uv \in M }} (-1)^{|M|} f_G(M) \\ &= x_u \mu(G \setminus u, \mathbf{x} \oplus \mathbf{w} ) - \sum_{v \in N(u)} w_{uv}^2 \sum_{M \in M( (G \setminus u) \setminus v)} (-1)^{|M|} f_{(G \setminus u)\setminus v}(M) \\ &= x_u \mu(G \setminus u, \mathbf{x} \oplus \mathbf{w} ) - \sum_{v \in N(u)} w_{uv}^2 \thinspace \mu((G \setminus u) \setminus v, \mathbf{x} \oplus \mathbf{w}).
\end{align*}
\end{proof} \noindent
Let $G$ be a graph and $u \in V(G)$. The \textit{path tree} $T(G,u)$ is the tree with vertices labelled by paths in $G$ starting at $u$ and where two vertices are joined by an edge if one vertex is labelled by a maximal subpath of the other. 

\begin{example} \noindent \newline
\begin{tabular}{ cc }
$G$ & $T(G,1)$ \\ \\
\begin{tikzpicture}

\node[label=above:{ $1$}, draw=black,fill,circle,inner sep=0pt,minimum size=3pt] (x1) at (-2.5,0) {};

\node[label=above:{ $3$}, draw=black,fill,circle,inner sep=0pt,minimum size=3pt] (x3) at (0,0) {};

\node[label=above:{ $5$}, draw=black,fill,circle,inner sep=0pt,minimum size=3pt] (x5) at (2.5,0) {};

\node[label=below:{ $2$}, draw=black,fill,circle,inner sep=0pt,minimum size=3pt] (x2) at (-2.5,-2.5) {};

\node[label=below:{ $4$}, draw=black,fill,circle,inner sep=0pt,minimum size=3pt] (x4) at (0,-2.5) {};

\node[label=below:{ $6$}, draw=black,fill,circle,inner sep=0pt,minimum size=3pt] (x6) at (2.5,-2.5) {};

\draw (x1) -- (x3);
\draw (x3) -- (x5);
\draw (x5) -- (x6);
\draw (x6) -- (x4);
\draw (x4) -- (x2);
\draw (x2) -- (x1);
\draw (x3) -- (x4);

\end{tikzpicture} 
&
\includegraphics[width=65mm]{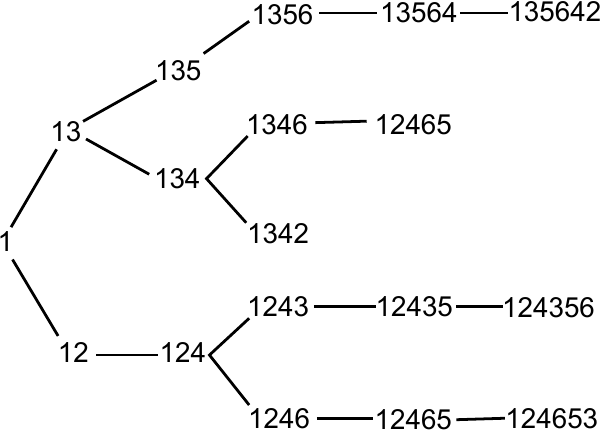}

\end{tabular}
    \label{MatchignExFig}
\end{example}

\begin{definition}
Let $G$ be a graph and $u \in V(G)$. Let $\phi: \mathbb{R}^{T(G,u)} \to \mathbb{R}^{G}$ denote the linear change of variables defined by
\begin{align*}
\displaystyle x_{p} &
\mapsto x_{i_k}, \\ w_{pp'} &\mapsto w_{i_k i_{k+1}},
\end{align*}  \noindent
where $p = i_1 \cdots i_k$ and $p' = i_1 \cdots i_k i_{k+1}$ are adjacent vertices in $T(G,u)$. For every subforest $T \subseteq T(G,u)$, define the polynomial
$$
\eta(T, \xx \oplus \ww) = \mu(T, \phi(\xx' \oplus \ww'))
$$
where $\xx' = (x_p)_{p \in V(T)}$ and $\ww' = (w_e)_{e \in E(T)}$.
\begin{remark}
Note that $\eta(T, \xx \oplus \ww)$ is a polynomial in variables $\xx = (x_v)_{v \in V(G)}$ and $\ww = (w_e)_{e \in E(G)}$.
\end{remark} \noindent
For the univariate matching polynomial we have the following rather unexpected divisibility relation due to Godsil \cite{God2}, 
\begin{align*}
\frac{\mu(G \setminus u, t)}{\mu(G, t)} = \frac{\mu(T(G,u) \setminus u, t)}{\mu(T(G,u), t)}.
\end{align*} \noindent
Below we prove a multivariate analogue of this fact. A similar multivariate analogue was also noted independently by Leake and Ryder \cite{LR}. In fact they were able to find a further generalization to independence polynomials of simplicial graphs. We will revisit their results in Section \ref{sec::indep}. The arguments all closely resemble Godsil's proof for the univariate matching polynomial. For the convenience of the reader we provide the details in our setting.  
\end{definition}
\begin{lemma}
\label{matchquot}
Let $u \in V(G)$. Then
\begin{align*}
\displaystyle \frac{\mu(G \setminus u, \mathbf{x} \oplus \mathbf{w} )}{ \mu(G, \mathbf{x} \oplus \mathbf{w})} = \frac{\eta(T(G,u) \setminus u, \mathbf{x} \oplus \mathbf{w})}{ \eta(T(G,u), \mathbf{x} \oplus \mathbf{w})}.
\end{align*}
\end{lemma}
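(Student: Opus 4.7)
I will prove this by induction on $|V(G)|$, applying Lemma \ref{matchrec} to both $\mu(G,\xx\oplus\ww)$ at $u$ and to $\eta(T(G,u),\xx\oplus\ww)$ at the root (also labeled $u$). The base case $|V(G)|=1$ is immediate: both ratios reduce to $1/x_u$ under the conventions $\mu(\emptyset,\xx\oplus\ww)=\eta(\emptyset,\xx\oplus\ww)=1$.

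The central structural observation is the canonical decomposition
\[
T(G,u)\setminus u \;=\; \bigsqcup_{v\in N_G(u)} T_v,
\]
where $T_v$ is the subtree containing the vertex labeled by the length-one path $uv$. Each $T_v$ is isomorphic to $T(G\setminus u,v)$ via the bijection that strips the initial $u$ from each path label; since $\phi$ depends only on the final vertex (resp.\ final edge) of a path, this isomorphism is compatible with the variable substitutions, so $\eta(T_v,\xx\oplus\ww)=\eta(T(G\setminus u,v),\xx\oplus\ww)$. Multiplicativity of $\mu$ (hence $\eta$) over disjoint unions of graphs then yields
\[
\eta(T(G,u)\setminus u,\xx\oplus\ww) \;=\; \prod_{v\in N_G(u)} \eta(T(G\setminus u,v),\xx\oplus\ww),
\]
and analogously $\eta((T(G,u)\setminus u)\setminus uv,\xx\oplus\ww) = \eta(T(G\setminus u,v)\setminus v,\xx\oplus\ww)\prod_{v'\neq v}\eta(T(G\setminus u,v'),\xx\oplus\ww)$.

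Applying Lemma \ref{matchrec} to $T(G,u)$ at the root and dividing both sides by $\eta(T(G,u)\setminus u,\xx\oplus\ww)$, the common products cancel term by term, leaving
\[
\frac{\eta(T(G,u),\xx\oplus\ww)}{\eta(T(G,u)\setminus u,\xx\oplus\ww)} \;=\; x_u - \sum_{v\in N_G(u)} w_{uv}^2 \,\frac{\eta(T(G\setminus u,v)\setminus v,\xx\oplus\ww)}{\eta(T(G\setminus u,v),\xx\oplus\ww)}.
\]
By the inductive hypothesis applied to each pair $(G\setminus u,v)$, each $\eta$-quotient on the right equals $\mu((G\setminus u)\setminus v,\xx\oplus\ww)/\mu(G\setminus u,\xx\oplus\ww)$. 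On the other hand, dividing Lemma \ref{matchrec} applied to $G$ at $u$ by $\mu(G\setminus u,\xx\oplus\ww)$ produces the identical expression for $\mu(G,\xx\oplus\ww)/\mu(G\setminus u,\xx\oplus\ww)$. Equating the two closes the induction, as an equality of rational functions in $\RR(\xx,\ww)$.

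The main obstacle is the path-tree decomposition $T(G,u)\setminus u\cong\bigsqcup_v T(G\setminus u,v)$ together with compatibility under $\phi$. This is essentially combinatorial bookkeeping, resting on the elementary fact that a simple path in $G$ beginning $u,v,\ldots$ cannot revisit $u$, so such paths correspond bijectively (after stripping the initial $u$) to simple paths in $G\setminus u$ starting at $v$; since $\phi$ reads off only the last vertex (resp.\ last edge) of a path, the variable assignments agree across this bijection.
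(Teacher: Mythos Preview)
Your proof is correct and follows essentially the same approach as the paper: induction on $|V(G)|$, the decomposition $T(G,u)\setminus u \cong \bigsqcup_{v\in N_G(u)} T(G\setminus u,v)$ compatible with $\phi$, and application of the recursion (Lemma~\ref{matchrec}) to both $\mu(G,\cdot)$ and $\eta(T(G,u),\cdot)$ at the root, followed by the inductive hypothesis on each pair $(G\setminus u,v)$. The paper's proof is organized slightly differently (it first isolates the ratio $\eta(T(G,u)\setminus\{u,uv\})/\eta(T(G,u)\setminus u)$ as a separate claim and takes trees as the base case), but the content is the same.
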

\begin{proof}
If $G$ is a tree, then $\mu(G, \xx \oplus \ww) = \eta(T(G,u), \xx \oplus \ww)$ and $\mu(G \setminus u, \xx \oplus \ww) = \eta(T(G,u) \setminus u, \xx \oplus \ww)$ so the lemma holds. In particular the lemma holds for all graphs with at most two vertices. We now argue by induction on the number of vertices of $G$. We first claim that
$$
\frac{\eta(T(G,u)\setminus \{ u, uv \}, \xx \oplus \ww )}{\eta(T(G,u) \setminus u, \xx \oplus \ww )} = \frac{\eta(T(G \setminus u, v)\setminus v, \xx \oplus \ww ) }{\eta(T(G \setminus u, v), \xx \oplus \ww)}. 
$$
Let $v \in N(u)$. By examining the path tree $T(G,u)$ we note the following isomorphisms
\begin{align*}
T(G, u) \setminus u &\cong \bigsqcup_{n \in N(u)} T(G \setminus u, n), \\ T(G, u) \setminus \{ u,uv  \} &\cong \left ( \bigsqcup_{\substack{n \in N(u) \\ n \neq v}} T(G \setminus u, n) \right ) \sqcup T(G \setminus u,v) \setminus v, 
\end{align*} \noindent
following from the fact that $T(G\setminus u, n)$ is isomorphic to the connected component of $T(G,u)\setminus u$ which contains the path $un$ in $G$.
By the definition of $\phi$ and the general multiplicative identity 
\begin{align*}
\displaystyle \mu(G \sqcup H, \mathbf{x} \oplus \mathbf{w}) = \mu(G, \mathbf{x} \oplus \mathbf{w})\mu(H, \mathbf{x} \oplus \mathbf{w}),
\end{align*} \noindent 
the above isomorphisms translate to the following identities
\begin{align*}
\displaystyle \eta(T(G, u) \setminus u, \xx \oplus \ww) &= \prod_{n \in N(u)} \eta(T(G \setminus u, n), \xx \oplus \ww), \\  \eta(T(G,u) \setminus \{u,uv\}, \xx \oplus \ww) &= \eta(T(G\setminus u,v) \setminus v, \xx \oplus \ww) \prod_{\substack{n \in N(u) \\ n \neq v}} \eta(T(G \setminus u, n), \xx \oplus \ww),
\end{align*} \noindent
from which the claim follows.
By Lemma \ref{matchrec}, induction, above claim and the definition of $\phi$ we finally get
\begin{align*}
\displaystyle \frac{\mu(G, \xx \oplus \ww)}{\mu(G \setminus u, \xx \oplus \ww)} &= \frac{x_u \mu(G\setminus u, \xx \oplus \ww) - \sum_{v \in N(u)}w_{uv}^2 \mu(G \setminus \{u,v \}, \xx \oplus \ww)  }{\mu(G \setminus u, \xx \oplus \ww)} \\ &= x_u - \sum_{v \in N(u)}w_{uv}^2 \frac{\mu((G \setminus u) \setminus  v, \xx \oplus \ww)}{\mu(G \setminus u, \xx \oplus \ww)} \\ &= x_u - \sum_{v \in N(u)} w_{uv}^2 \frac{\eta(T(G \setminus u, v) \setminus v, \xx \oplus \ww)}{\eta(T(G \setminus u, v), \xx \oplus \ww)} \\ &= x_u - \sum_{v \in N(u)} w_{uv}^2 \frac{\eta(T(G,u)\setminus \{ u, uv \}, \xx \oplus \ww )}{\eta(T(G,u) \setminus u, \xx \oplus \ww )} \\ &= \frac{\eta(T(G,u), \xx \oplus \ww)}{\eta(T(G,u) \setminus u, \xx \oplus \ww )}
\end{align*} \noindent
which is the reciprocal of the desired identity.

\end{proof} \noindent
\begin{lemma} \label{matchdiv}
Let $u \in V(G)$. Then $\mu(G, \xx \oplus \ww)$ divides $\eta(T(G,u), \xx \oplus \ww)$.
\end{lemma}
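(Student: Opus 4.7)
The plan is to prove the lemma by strong induction on $|V(G)|$, with Lemma \ref{matchquot} supplying the crucial identity. The base case $V(G) = \{u\}$ is immediate, as both $\mu(G, \xx \oplus \ww)$ and $\eta(T(G,u), \xx \oplus \ww)$ equal $x_u$.

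For the inductive step (taking $G$ connected, which is the natural setting for the path tree construction), I first rearrange Lemma \ref{matchquot} into the polynomial identity
$$
\mu(G, \xx \oplus \ww) \cdot \eta(T(G,u) \setminus u, \xx \oplus \ww) = \mu(G \setminus u, \xx \oplus \ww) \cdot \eta(T(G,u), \xx \oplus \ww).
$$
Since $\RR[\xx, \ww]$ is a UFD and $\mu(G \setminus u, \xx \oplus \ww)$ is nonzero (the coefficient of $\prod_{v \in V(G) \setminus \{u\}} x_v$ equals $1$, coming from the empty matching), it suffices to show that $\mu(G \setminus u, \xx \oplus \ww)$ divides $\eta(T(G,u) \setminus u, \xx \oplus \ww)$ and then cancel this common factor. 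For this I invoke the factorization
$$
\eta(T(G,u) \setminus u, \xx \oplus \ww) = \prod_{n \in N(u)} \eta(T(G \setminus u, n), \xx \oplus \ww),
$$
already derived in the proof of Lemma \ref{matchquot} from the tree isomorphism $T(G,u) \setminus u \cong \bigsqcup_{n \in N(u)} T(G \setminus u, n)$. Applying the inductive hypothesis to the smaller graph $G \setminus u$ at each neighbour $n \in N(u)$ yields $\mu(G \setminus u, \xx \oplus \ww) \mid \eta(T(G \setminus u, n), \xx \oplus \ww)$, and since $N(u) \neq \emptyset$ when $G$ is connected with $|V(G)| \geq 2$, the divisibility passes to the product, completing the step.

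I do not anticipate a serious obstacle. The most delicate point is ensuring the inductive divisibility, a priori established in the subring $\RR[\xx_{G \setminus u}, \ww_{G \setminus u}]$, extends to a divisibility in the full ring $\RR[\xx_G, \ww_G]$; this is routine since any quotient witnessing divisibility in the subring remains a valid quotient after extension of scalars. The argument does leave out the case of an isolated vertex $u$ in a graph with $|V(G)| \geq 2$, where the statement genuinely fails (e.g.\ two disjoint edges with $u$ an endpoint), confirming that the connected hypothesis is natural.
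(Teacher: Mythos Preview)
Your argument is essentially the paper's: induction on $|V(G)|$, the product decomposition $\eta(T(G,u)\setminus u,\xx\oplus\ww)=\prod_{n\in N(u)}\eta(T(G\setminus u,n),\xx\oplus\ww)$, the inductive hypothesis applied to $G\setminus u$, and then Lemma~\ref{matchquot} to pass from $\mu(G\setminus u)\mid\eta(T(G,u)\setminus u)$ to $\mu(G)\mid\eta(T(G,u))$; you are simply more explicit about the UFD reasoning behind that last step.

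One subtlety, present in the paper's proof as well, deserves a word. When you invoke the inductive hypothesis to obtain $\mu(G\setminus u,\xx\oplus\ww)\mid\eta(T(G\setminus u,n),\xx\oplus\ww)$, the graph $G\setminus u$ need not be connected even though $G$ is, and (as you yourself observe) the divisibility statement is false for disconnected graphs: if $G\setminus u$ has a component not containing $n$, then $\mu(G\setminus u)$ involves variables that $\eta(T(G\setminus u,n))$ does not. The repair is routine: apply the hypothesis instead to the connected component $C_n$ of $G\setminus u$ containing $n$, getting $\mu(C_n)\mid\eta(T(C_n,n))=\eta(T(G\setminus u,n))$. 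Since $G$ is connected, every component of $G\setminus u$ meets $N(u)$, and distinct components involve disjoint variable sets, so the factors $\mu(C)$ are pairwise coprime and their product $\mu(G\setminus u)=\prod_C\mu(C)$ still divides $\prod_{n\in N(u)}\eta(T(G\setminus u,n))=\eta(T(G,u)\setminus u,\xx\oplus\ww)$.
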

\begin{proof}
The argument is by induction on the number of vertices of $G$.
Deleting the root $u$ of $T(G,u)$ we get a forest with $|N(u)|$ disjoint components isomorphic to $T(G \setminus u, v)$ respectively for $v \in N(u)$. This gives
\begin{align} \label{proddecomp}
\displaystyle \eta(T(G,u) \setminus u, \mathbf{x} \oplus \mathbf{w}) = \prod_{v \in N(u)} \eta(T(G \setminus u,v), \mathbf{x} \oplus \mathbf{w}).
\end{align} \noindent
Therefore $\eta(T(G \setminus u,v), \mathbf{x} \oplus \mathbf{w})$ divides $\eta(T(G,u) \setminus u, \mathbf{x} \oplus \mathbf{w})$ for all $v \in N(u)$. By induction $\mu(G \setminus u, \mathbf{x} \oplus \mathbf{w})$ divides $\eta(T(G \setminus u,v), \mathbf{x} \oplus \mathbf{w})$ for all $v \in N(u)$. Hence $\mu(G \setminus u, \mathbf{x} \oplus \mathbf{w})$ divides $\eta(T(G,u) \setminus u, \mathbf{x} \oplus \mathbf{w})$, so by Lemma \ref{matchquot}, $\mu(G, \mathbf{x} \oplus \mathbf{w})$ divides $\eta(T(G,u), \mathbf{x} \oplus \mathbf{w})$.

\end{proof}

In \cite{GG} Godsil and Gutman proved the following  relationship between the univariate matching polynomial $\mu(G,t)$ of a graph $G$ and the characteristic polynomial $\chi(A,t)$ of its adjacency matrix $A$
\begin{align*}
\displaystyle \chi(A,t) = \sum_{C} (-2)^{\text{comp}(C)} \mu(G \setminus C, t),
\end{align*} \noindent
where the sum ranges over all subgraphs $C$ (including $C = \emptyset$) in which each component is a cycle of degree $2$ and $\text{comp}(C)$ is the number of connected components of $C$. In particular if $T$ is a tree, then the only such subgraph is $C = \emptyset$ and therefore
\begin{align*}
\displaystyle \chi(A,t) = \mu(T,t).
\end{align*} \noindent
Next we will derive a multivariate analogue of this relationship for trees.
\begin{lemma}
\label{treedetrep}
Let $T = (V,E)$ be a tree. Then $\mu(T, \mathbf{x} \oplus \mathbf{w})$ has a definite determinantal representation.
\end{lemma}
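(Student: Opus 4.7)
The plan is to exhibit an explicit symmetric linear pencil whose determinant equals $\mu(T, \xx \oplus \ww)$ exactly, with no auxiliary multiplier $q$ needed. Let $E_{ij}$ denote the matrix unit with a $1$ in position $(i,j)$ and zeros elsewhere. I will take the symmetric $|V| \times |V|$ matrix
\begin{align*}
M(\xx,\ww) \;=\; \sum_{v \in V} x_v E_{vv} \;+\; \sum_{e = uv \in E} w_e \bigl(E_{uv} + E_{vu}\bigr),
\end{align*}
i.e.\ the matrix with $M_{vv} = x_v$ on the diagonal, $M_{uv} = M_{vu} = w_{uv}$ whenever $uv \in E(T)$, and $0$ elsewhere. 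Evaluating at $\ee = \one \oplus \mathbf{0}$ gives $M(\ee) = I$, which is positive definite, so this pencil meets the definiteness requirement.

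The main step is to verify the identity $\det M(\xx, \ww) = \mu(T, \xx \oplus \ww)$. I will expand the determinant as
\begin{align*}
\det M(\xx,\ww) \;=\; \sum_{\sigma \in \mathrm{Sym}(V)} \sgn(\sigma) \prod_{v \in V} M_{v, \sigma(v)},
\end{align*}
and note that a term survives only if, for every $v$, one has either $\sigma(v) = v$ or $v\sigma(v) \in E(T)$. Decomposing $\sigma$ into disjoint cycles, any non-trivial cycle would trace out a closed walk in $T$; because $T$ is a tree (and hence contains no cycle of length $\geq 3$), every non-trivial cycle of $\sigma$ must be a transposition $(uv)$ with $uv \in E(T)$. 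Therefore the surviving permutations are exactly the involutions supported on matchings of $T$: a matching $M = \{u_1v_1, \ldots, u_kv_k\} \in \mathcal{M}(T)$ corresponds to $\sigma_M = (u_1 v_1)\cdots (u_k v_k)$ with $\sgn(\sigma_M) = (-1)^k$.

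For such $\sigma_M$, each edge $e = u_i v_i \in M$ contributes $M_{u_i v_i} M_{v_i u_i} = w_e^2$ to the product, while each fixed point $v \notin V(M)$ contributes $x_v$. Consequently the summand equals $(-1)^{|M|} \prod_{v \notin V(M)} x_v \prod_{e \in M} w_e^2$, and summing over $M \in \mathcal{M}(T)$ reproduces $\mu(T, \xx \oplus \ww)$ verbatim by Definition of the homogeneous multivariate matching polynomial. The only potential obstacle is the cycle-free argument, but it is immediate from the tree hypothesis; everything else is routine bookkeeping of signs and weights. This produces the required definite determinantal representation.
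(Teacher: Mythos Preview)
Your proof is correct and follows essentially the same approach as the paper's: both construct the symmetric pencil $M = \mathrm{diag}(\xx) + A$ (with $A_{uv} = w_{uv}$ for $uv \in E(T)$), expand the determinant as a sum over permutations, use acyclicity of $T$ to reduce the surviving permutations to fixed-point-free involutions on subsets (i.e.\ matchings), and verify positive definiteness at $\ee = \one \oplus \mathbf{0}$ by observing $M(\ee) = I$. Your presentation via direct cycle decomposition is slightly more streamlined than the paper's intermediate grouping by the fixed-point set $S$, but the content is the same.
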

\begin{proof}
Let $X = \text{diag}(\mathbf{x})$ and $A = (A_{ij})$ be the matrix 
$$
A_{ij} = \begin{cases} w_{ij} & \text{ if } ij \in E(T) \\ 0 & \text{ otherwise} \end{cases}
$$ 
for all $i,j \in V(T)$. If $\sigma \in \text{Sym}(V(T))$ is an involution (i.e $\sigma^2 = id$), then clearly $A_{j \sigma(j)} = w_{j\sigma(j)} = A_{\sigma(j) \sigma^2(j)}$ since $A$ is symmetric.
Hence by acyclicity of trees we have that
\begin{align*}
\displaystyle  \det(X + A) &= \sum_{\sigma \in \text{Sym}(V(T))} \text{sgn}(\sigma) \prod_{i \in V(T)} (X_{i \sigma(i)} + A_{i \sigma(i)}) \\ &=  \sum_{S \subseteq V(T)} \prod_{i \in V(T) \setminus S} x_i \sum_{\substack{\sigma \in \text{Sym}(S) \\ \sigma(j) \neq j \thinspace \forall j \in S \\ \sigma^2 = \text{id}}} \text{sgn}(\sigma) \prod_{j \in S} A_{j \sigma(j)} \\ &= \sum_{S \subseteq V(T)} \prod_{i \in V(T) \setminus S} x_i \sum_{\substack{M \in \mathcal{M}(T[S])\\ M \text{ perfect}}}(-1)^{|M|} \prod_{jk \in M} w_{jk}^2  \\ &= \sum_{M \in \mathcal{M}(T)} (-1)^{|M|} \prod_{i \not \in M} x_i \prod_{jk \in M} w_{jk}^2 && \\ &= \mu(T, \mathbf{x} \oplus \mathbf{w}).
\end{align*} \noindent
Write
\begin{align*}
\displaystyle X + A =  \sum_{i \in V(T)} x_i E_{ii} + \sum_{ij \in E(T)} w_{ij} (E_{ij} + E_{ji}),
\end{align*} \noindent
where $\{E_{ij} : i,j \in V(T)\}$ denotes the standard basis for the vector space of all real $|V(T)| \times |V(T)|$ matrices. Evaluated at $\ee = \mathbf{1} \oplus \mathbf{0}$ we obtain the identity matrix $I$ which is positive definite.

\end{proof}
\begin{remark}
The proof of Lemma \ref{treedetrep} is not dependent on $T$ being connected so the statement remains valid for arbitrary undirected acyclic graphs (i.e. forests). 
\end{remark} \noindent
We now have all the ingredients to prove our main theorem.
\begin{theorem}
\label{matchconespec}
The hyperbolicity cone of $\mu(G, \mathbf{x} \oplus \mathbf{w})$ is spectrahedral. 
\end{theorem}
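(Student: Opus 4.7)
The plan is to verify the reformulation of the generalized Lax conjecture displayed in \eqref{qhd}: namely, to produce a polynomial $q$ and real symmetric matrices so that $q\cdot\mu(G) = \det(L)$ for some linear matrix pencil $L(\xx \oplus \ww)$, together with the cone inclusion $\Lambda_{++}(\mu(G)) \subseteq \Lambda_{++}(q)$ and positive-definiteness of $L$ at $\ee = \mathbf{1} \oplus \mathbf{0}$.

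The determinantal side is obtained by lifting the tree representation through $\phi$. By Lemma \ref{treedetrep} applied to the path tree $T(G,u)$, the tree matching polynomial $\mu(T(G,u), \xx' \oplus \ww')$ equals $\det(X' + A')$ for a symmetric linear pencil in the tree variables. Composing with the linear substitution $\phi$ from the definition of $\eta$ gives
\begin{align*}
\eta(T(G,u), \xx \oplus \ww) = \det(\phi(X' + A')),
\end{align*}
a determinant of a symmetric linear pencil in $(\xx, \ww) \in \RR^G$. At $\ee = \mathbf{1} \oplus \mathbf{0}$, $\phi$ sends each $x_p$ to $1$ and each $w_{pp'}$ to $0$, so $\phi(X'+A')\big|_{\ee}$ is the identity matrix, in particular positive definite. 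By Lemma \ref{matchdiv}, $\mu(G)$ divides $\eta(T(G,u))$, so we may set $q := \eta(T(G,u))/\mu(G)$ and obtain the identity $q \cdot \mu(G) = \det(\phi(X' + A'))$.

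The heart of the argument is the cone inclusion $\Lambda_{++}(\mu(G)) \subseteq \Lambda_{++}(q)$, which I would establish by induction on $|V(G)|$; the base case (a tree, where $\eta(T(G,u)) = \mu(G)$ and $q \equiv 1$) is trivial. For the inductive step, fix any $u \in V(G)$. Combining Lemma \ref{matchquot} with the product decomposition \eqref{proddecomp} gives
\begin{align*}
q \,=\, \frac{\eta(T(G,u) \setminus u)}{\mu(G \setminus u)} \,=\, \frac{\prod_{v \in N(u)} \eta(T(G \setminus u, v))}{\mu(G \setminus u)},
\end{align*}
and applying the inductive hypothesis to $G \setminus u$ at each neighbour $v$ yields $\eta(T(G \setminus u, v)) = \mu(G \setminus u) \cdot q_v$ with $\Lambda_{++}(\mu(G \setminus u)) \subseteq \Lambda_{++}(q_v)$. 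Substitution gives the factorization $q = \mu(G \setminus u)^{|N(u)|-1} \prod_{v \in N(u)} q_v$. On the other hand, Lemma \ref{matchrec} yields $\partial \mu(G)/\partial x_u = \mu(G \setminus u)$, and the coordinate vector with $x_u$-entry $1$ and all other entries $0$ lies in $\Lambda_+(\mu(G))$ (one computes directly that $\mu(G, t\ee - \text{this vector}) = t^{|V(G)|-1}(t-1)$), so Lemma \ref{derivhyp} applied in this direction yields $\Lambda_{++}(\mu(G)) \subseteq \Lambda_{++}(\mu(G \setminus u))$. Since the hyperbolicity cone of a product of polynomials hyperbolic with respect to a common direction is the intersection of the factor cones, we conclude
\begin{align*}
\Lambda_{++}(\mu(G)) \,\subseteq\, \Lambda_{++}(\mu(G \setminus u)) \cap \bigcap_{v \in N(u)} \Lambda_{++}(q_v) \,=\, \Lambda_{++}(q),
\end{align*}
closing the induction.

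The principal obstacle is the cone-inclusion step, whose cleanness depends on correctly extracting the recursive structure $q = \mu(G \setminus u)^{|N(u)|-1} \prod_v q_v$ from the path-tree divisibility machinery of Lemmas \ref{matchquot} and \ref{matchdiv}. Once that factorization is identified, the derivative inclusion from Lemma \ref{derivhyp} combines cleanly with the induction hypothesis to produce the required containment.
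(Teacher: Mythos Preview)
Your argument is correct and mirrors the paper's proof: take $q=\eta(T(G,u))/\mu(G)$, obtain its definite determinantal representation by pushing Lemma~\ref{treedetrep} through $\phi$, and establish $\Lambda_{++}(\mu(G))\subseteq\Lambda_{++}(q)$ inductively via the factorization $q=\mu(G\setminus u)^{|N(u)|-1}\prod_{v\in N(u)} q_v$ together with the derivative inclusion from Lemma~\ref{derivhyp}. The one point the paper handles that you omit is the disconnected case (needed since Lemma~\ref{matchdiv}, and hence the definition of $q_v$ as a polynomial, requires connectedness); the paper treats $G=G_1\sqcup G_2$ by a direct product-of-cones argument before reducing to connected $G$.
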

\begin{proof}
The proof is by induction on the number of vertices of $G$. For the base case we have $\mu(G, \xx \oplus \ww) = x_v$, so $\Lambda_{+} = \{ x \in \RR : x \geq 0 \}$ which is clearly spectrahedral. Assume $G$ contains more than one vertex. If $G = G_1 \sqcup G_2$ for some non-empty graphs $G_1,G_2$, then $\Lambda_{++}(\mu(G_i, \xx, \oplus \ww))$ is spectrahedral by induction for $i = 1,2$. Therefore
\begin{align*}
\displaystyle \Lambda_{++}(\mu(G, \xx, \oplus \ww)) &= \Lambda_{++}(\mu(G_1 \sqcup G_2, \xx, \oplus \ww)) \\&= \Lambda_{++} \left (\mu(G_1, \xx, \oplus \ww)\mu(G_1, \xx, \oplus \ww) \right )  \\ &= \Lambda_{++}(\mu(G_1, \xx, \oplus \ww))  \cap \Lambda_{++}(\mu(G_2, \xx, \oplus \ww))
\end{align*} \noindent
showing that $\Lambda_{++}(\mu(G, \xx, \oplus \ww))$ is spectrahedral.
We may therefore assume $G$ is connected.
Let $u \in V(G)$. Since $G$ is connected and has size greater than one, $N(u) \neq \emptyset$. By Lemma \ref{matchdiv} we may define the polynomial
$$
q_{G,u}(\xx \oplus \ww) = \frac{\eta(T(G,u), \xx \oplus \ww)}{\mu(G, \xx \oplus \ww)}
$$
for each graph $G$ and $u \in V(G)$.
We want to show that 
$$
\Lambda_{++}(\mu(G, \xx \oplus \ww)) \subseteq \Lambda_{++}(q_{G,u}(\xx \oplus \ww)).
$$
By Lemma \ref{matchquot} we have that
\begin{align*}
\displaystyle q_{G,u}(\mathbf{x} \oplus \mathbf{w}) \mu(G \setminus u, \mathbf{x} \oplus \mathbf{w} ) = \eta(T(G,u) \setminus u, \mathbf{x} \oplus \mathbf{w}).
\end{align*} \noindent
Fixing $v \in N(u)$ it follows using (\ref{proddecomp}) that
\begin{align*}
\displaystyle \frac{q_{G,u}(\mathbf{x} \oplus \mathbf{w}) }{q_{G \setminus u,v}(\mathbf{x} \oplus \mathbf{w})} &=  \frac{q_{G,u}(\mathbf{x} \oplus \mathbf{w}) \mu(G \setminus u, \mathbf{x} \oplus \mathbf{w}) }{q_{G \setminus u,v}(\mathbf{x} \oplus \mathbf{w}) \mu(G \setminus u, \mathbf{x} \oplus \mathbf{w})} \\ &=  \frac{\eta(T(G,u) \setminus u, \mathbf{x} \oplus \mathbf{w} )}{ \eta(T(G \setminus u, v), \mathbf{x} \oplus \mathbf{w})} \\ &= \prod_{w \in N(u) \setminus v} \eta(T(G \setminus u, w), \mathbf{x} \oplus \mathbf{w}) \\ &= \prod_{w \in N(u) \setminus v} q_{G \setminus u,w}(\mathbf{x} \oplus \mathbf{w}) \mu(G \setminus u, \mathbf{x} \oplus \mathbf{w}).
\end{align*} \noindent
Note that 
$$\frac{\partial}{\partial x_u} \mu(G, \mathbf{x} \oplus \mathbf{w}) = \mu(G \setminus u, \mathbf{x} \oplus \mathbf{w}).
$$
Therefore by Lemma \ref{derivhyp},
\begin{align*}
\displaystyle \Lambda_{++}(\mu(G, \mathbf{x} \oplus \mathbf{w})) \subseteq \Lambda_{++}(\mu(G \setminus u, \mathbf{x} \oplus \mathbf{w})) \subseteq \Lambda_{++}(q_{G \setminus u,w}(\mathbf{x} \oplus \mathbf{w}))
\end{align*} \noindent
for all $w \in N(u)$ where the last inclusion follows by inductive hypothesis. Hence
\begin{align*}
\displaystyle \Lambda_{++}(\mu(G, \mathbf{x} \oplus \mathbf{w})) &\subseteq \bigcap_{w \in N(u)} \Lambda_{++} (q_{G \setminus u,w}(\xx \oplus \ww) \cap \Lambda_{++} \left (\mu(G \setminus u, \xx \oplus \ww \right ) \\ &=  \Lambda_{++} \left ( q_{v, G \setminus u}(\mathbf{x} \oplus \mathbf{w}) \prod_{w \in N(u) \setminus v} q_{w, G \setminus u}(\mathbf{x} \oplus \mathbf{w}) \mu(G \setminus u, \mathbf{x} \oplus \mathbf{w})  \right ) \\ &= \Lambda_{++}(q_{G,u}(\xx \oplus \ww)).
\end{align*} \noindent
Finally by Lemma \ref{treedetrep}, $\eta(T(G,u), \mathbf{x} \oplus \mathbf{w})$ has a definite determinantal representation. Hence the theorem follows by induction.
 
\end{proof} \noindent
\begin{remark}
To show that a hyperbolic polynomial $h$ has a spectrahedral hyperbolicity cone it is by Theorem \ref{matchconespec} sufficient to show that $h$ can be realized as a factor of a matching polynomial $\mu(G, \mathbf{x} \oplus \mathbf{w})$ with $\Lambda_{++}(h, \mathbf{e}) \subseteq \Lambda_{++} \left ( \frac{\mu(G, \mathbf{x} \oplus \mathbf{w})}{h}, \mathbf{e} \right )$ (possibly after a linear change of variables).
\end{remark} \noindent

\begin{figure} 
    \centering
\begin{tikzpicture}
\node[draw=white,minimum size=5cm,regular polygon,regular polygon sides=6] (p) {};

\node[label=below right:{\scriptsize $x_{n+1}$}, draw=black,fill,circle,inner sep=0pt,minimum size=3pt] (x1) {};

\node[label={\scriptsize $x_2$}, draw=black,fill,circle,inner sep=0pt,minimum size=3pt] (x2) at (p.corner 1) {};

\node[label={\scriptsize $x_{n}$}, draw=black,fill,circle,inner sep=0pt,minimum size=3pt] (xn) at (p.corner 2) {};

\node[label=left:{\scriptsize $x_6$}, draw=black,fill,circle,inner sep=0pt,minimum size=3pt] (x6) at (p.corner 3) {};

\node[label=below:{\scriptsize $x_5$}, draw=black,fill,circle,inner sep=0pt,minimum size=3pt] (x5) at (p.corner 4) {};

\node[label=below:{\scriptsize $x_4$}, draw=black,fill,circle,inner sep=0pt,minimum size=3pt] (x4) at (p.corner 5) {};

\node[label=right:{\scriptsize $x_{3}$}, draw=black,fill,circle,inner sep=0pt,minimum size=3pt] (x3) at (p.corner 6) {};

\coordinate (anchor1) at (-0.4,0.7);
\coordinate (anchor2) at (-0.8,0);

\node (w1) at (0.9, 1) {\scriptsize $w_1$};
\node (w2) at (1.5, -0.2) {\scriptsize $w_2$};
\node (w3) at (0.5, -1.4) {\scriptsize $w_3$};
\node (w4) at (-1, -1.2) {\scriptsize $w_4$};
\node (w5) at (-1.5, 0.15) {\scriptsize $w_5$};
\node (wn) at (-0.4, 1.2) {\scriptsize $w_n$};

\draw (x1) -- (x2);
\draw (x1) -- (x3);
\draw (x1) -- (x4);
\draw (x1) -- (x5);
\draw (x1) -- (x6);
\draw (x1) -- (xn);
\draw[dotted]
      (anchor1) to[out=140,in=140] (anchor2);

\end{tikzpicture} 
\caption{The star graph $S_n$ labelled by vertex and edge variables}
    \label{stargraph}
\end{figure}
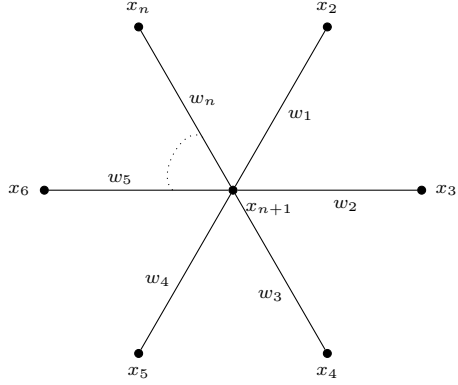

The \textit{elementary symmetric polynomial} $e_d(\xx) \in \RR[x_1, \dots, x_n]$ of degree $d$ in $n$ variables is defined by
$$
e_d(\xx) = \sum_{\substack{S \subseteq [n] \\ |S| = d}} \prod_{i \in S} x_i.
$$
The polynomials $e_d(\xx)$ are hyperbolic (in fact stable) as a consequence of e.g Grace-Walsh-Szeg\H{o} theorem (see \cite[Thm 15.4]{Mar}).
\begin{example}
The star graph, denoted $S_n$, is given by the complete bipartite graph $K_{1,n}$ with $n+1$ vertices. As an application of Theorem \ref{matchconespec} we show that several well-known instances of hyperbolic polynomials have spectrahedral hyperbolicity cones by realizing them as factors of the multivariate matching polynomial of $S_n$ under some linear change of variables.
With notation as in Figure \ref{stargraph}, using the recursion in Lemma \ref{matchrec}, the multivariate matching polynomial of $S_n$ is given by 
\begin{align*}
\displaystyle \mu(S_n, \mathbf{x} \oplus \mathbf{w}) = \prod_{i = 1}^{n+1} x_i - \sum_{i=1}^n w_i^2 \prod_{\substack{j = 1 \\ j \neq i}}^{n} x_j. 
\end{align*}
\begin{enumerate}
\item For $h(\mathbf{x}) = e_{n-1}(\mathbf{x})$ consider the linear change of variables $x_n \mapsto -x_n$ and $w_i \mapsto x_n$ for $i = 1, \dots, n-1$. Then $\mu(S_{n-1}, \mathbf{x} \oplus \mathbf{w}) \mapsto -x_n e_{n-1}(\mathbf{x})$. Clearly $\Lambda_{++}(e_{n-1}(\mathbf{x}), \mathbf{1}) \subseteq \Lambda_{++}(x_n, \mathbf{1})$. The spectrahedrality of $\Lambda_{++}(e_{n-1}(\xx), \mathbf{1})$ was first proved by Sanyal in \cite{S}.
\item For $h(\mathbf{x}) = e_{2}(\mathbf{x})$ consider the linear change of variables $x_i \mapsto e_1(\mathbf{x})$ and $w_i \mapsto x_i$ for $i = 1, \dots, n+1$. Then $\mu(S_{n}, \mathbf{x} \oplus \mathbf{w}) \mapsto 2e_1(\mathbf{x})^{n-1}e_{2}(\mathbf{x})$. Since $D_{\mathbf{1}} e_2(\xx) = (n-1)e_1(\xx)$, Lemma \ref{derivhyp} implies that $\Lambda_{++}(e_2(\xx), \mathbf{1}) \subseteq \Lambda_{++}(e_1(\xx), \mathbf{1})$. Hence $\Lambda_{++}(e_2(\xx), \mathbf{1})$ is spectrahedral.
\item Let $h(\mathbf{x}) = x_n^2 - x_{n-1}^2 - \cdots - x_{1}^2$. Recall that $\Lambda_{++}(h, \mathbf{e})$ is the \textit{Lorentz cone} where $\mathbf{e} = (0, \dots, 0,1)$. Consider the linear change of variables $x_i \mapsto x_n$ and $w_i \mapsto x_{i}$ for $i = 1, \dots, n$. Then $\mu(S_{n-1}, \mathbf{x} \oplus \mathbf{w}) \mapsto x_n^n - \sum_{i = 1}^{n-1} x_{i}^2 x_n^{n-2} = x_n^{n-2}h(\mathbf{x})$. Clearly $\Lambda_{++}(h, \mathbf{e}) \subseteq \Lambda_{++}(x_n^{n-2}, \mathbf{e})$. Hence the Lorentz cone is spectrahedral. Of course this (and the preceding example) also follow from the fact that all quadratic hyperbolic polynomials have spectrahedral hyperbolicity cone \cite{NT}.
\end{enumerate}
\end{example} \noindent
Hyperbolicity cones of elementary symmetric polynomials have been studied by Zinchenko \cite{Z}, Sanyal \cite{S} and Br\"and\'en \cite{B2}. Br\"and\'en proved that all hyperbolicity cones of elementary symmetric polynomials are spectrahedral. As an application of Theorem \ref{matchconespec} we give a new proof of this fact using matching polynomials.

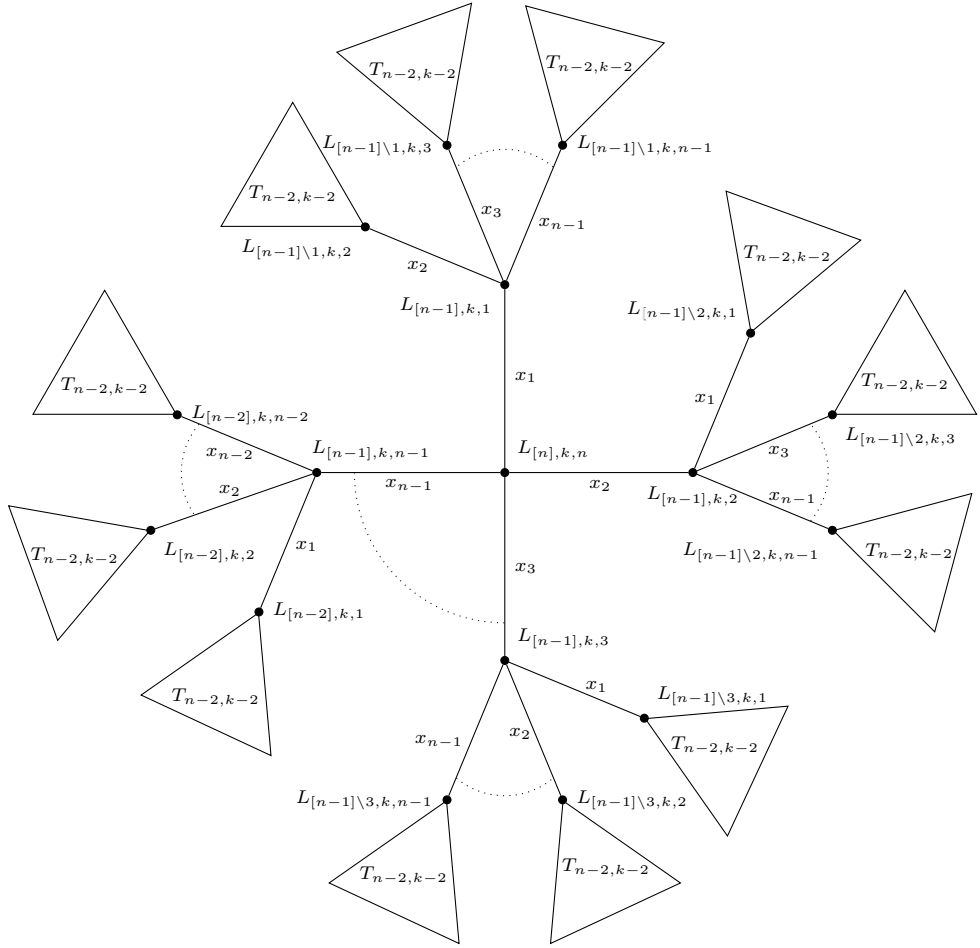
\begin{figure} 
    \centering
\begin{tikzpicture}

\node[label=above right:{\scriptsize $L_{[n],k,n}$}, draw=black,fill,circle,inner sep=0pt,minimum size=3pt] (v1) at (0,0) {};

\node[label=below:{\scriptsize $L_{[n-1],k,2}$}, draw=black,fill,circle,inner sep=0pt,minimum size=3pt] (v12) at (2.5,0) {};

\node[label=above right:{\scriptsize $L_{[n-1],k,3}$}, draw=black,fill,circle,inner sep=0pt,minimum size=3pt] (v13) at (0,-2.5) {};

\node[label=below left:{\scriptsize $L_{[n-1],k,1}$}, draw=black,fill,circle,inner sep=0pt,minimum size=3pt] (v11) at (0,2.5) {};

\node[label=above right:{ \hspace{-1em} \scriptsize $L_{[n-1],k,n-1}$}, draw=black,fill,circle,inner sep=0pt,minimum size=3pt] (v1n) at (-2.5,0) {};

\node[draw=white,minimum size=4cm,regular polygon,regular polygon sides=8] (p1) at (v12) {};

\node[label=above left:{\scriptsize $L_{[n-1]\setminus 2,k,1}$}, draw=black,fill,circle,inner sep=0pt,minimum size=3pt] (v21) at (p1.corner 1) {};

\node[draw=black,minimum size=2.2cm,regular polygon,regular polygon sides=3,rotate=40, anchor=corner 2] (t1) at (v21) {};

\node[] (T21) at ([xshift=3pt]t1) {\scriptsize $T_{n-2,k-2}$};

\node[label=below right:{\scriptsize $L_{[n-1]\setminus 2,k,3}$}, draw=black,fill,circle,inner sep=0pt,minimum size=3pt] (v22) at (p1.corner 8) {};

\node[draw=black,minimum size=2.2cm,regular polygon,regular polygon sides=3,rotate=0, anchor=corner 2] (t2) at (v22) {};

\node[] (T22) at ([yshift=-3.5pt]t2) {\scriptsize $T_{n-2,k-2}$};

\node[label=below left:{\scriptsize $L_{[n-1]\setminus 2,k,n-1}$}, draw=black,fill,circle,inner sep=0pt,minimum size=3pt] (v23) at (p1.corner 7) {};

\node[draw=black,minimum size=2.2cm,regular polygon,regular polygon sides=3,rotate=-45, anchor=corner 2] (t3) at (v23) {};

\node[] (T23) at ([xshift=-1.5pt]t3) {\scriptsize $T_{n-2,k-2}$};

\node[] (e11) at ([xshift= 8pt, yshift=-35]v11) {\scriptsize $x_1$};

\node[] (e12) at ([xshift=-35pt, yshift=-5]v12) {\scriptsize $x_2$};

\node[] (e13) at ([xshift=8pt, yshift=35]v13) {\scriptsize $x_{3}$};

\node[] (e1n) at ([xshift=35pt, yshift=-5]v1n) {\scriptsize $x_{n-1}$};

\node[] (e21) at ([xshift=-17pt, yshift=-25]v21) {\scriptsize $x_1$};

\node[] (e22) at ([xshift=-20pt, yshift=-14]v22) {\scriptsize $x_3$};

\node[] (e23) at ([xshift=-15pt, yshift=12]v23) {\scriptsize $x_{n-1}$};

\node[draw=white,minimum size=4cm,regular polygon,regular polygon sides=8] (p2) at (v1n) {};

\node[label=right:{\scriptsize $L_{[n-2],k,1}$}, draw=black,fill,circle,inner sep=0pt,minimum size=3pt] (vn1) at (p2.corner 5) {};

\node[draw=black,minimum size=2.2cm,regular polygon,regular polygon sides=3,rotate=-25, anchor=corner 1] (t4) at (vn1) {};

\node[label=below right:{\scriptsize $L_{[n-2],k,2}$}, draw=black,fill,circle,inner sep=0pt,minimum size=3pt] (vn2) at ([xshift=-10pt] p2.corner 4) {};

\node[draw=black,minimum size=2.2cm,regular polygon,regular polygon sides=3,rotate=-70, anchor=corner 1] (t5) at (vn2) {};

\node[label=right:{\scriptsize $L_{[n-2],k,n-2}$}, draw=black,fill,circle,inner sep=0pt,minimum size=3pt] (vn3) at (p2.corner 3) {};

\node[draw=black,minimum size=2.2cm,regular polygon,regular polygon sides=3,rotate=-120, anchor=corner 1] (t6) at (vn3) {};

\node[] (en1) at ([xshift= 18pt, yshift=25]vn1) {\scriptsize $x_1$};

\node[] (en2) at ([xshift= 30pt, yshift=15]vn2) {\scriptsize $x_2$};

\node[] (en3) at ([xshift= 20pt, yshift=-15]vn3) {\scriptsize $x_{n-2}$};

\node[] (Tn1) at ([yshift=-3.5pt, xshift=-3pt]t4) {\scriptsize $T_{n-2,k-2}$};

\node[] (Tn2) at ([yshift=1pt, xshift=1pt]t5) {\scriptsize $T_{n-2,k-2}$};

\node[] (Tn3) at ([yshift=-5pt, xshift=0pt]t6) {\scriptsize $T_{n-2,k-2}$};

\node[draw=white,minimum size=4cm,regular polygon,regular polygon sides=8] (p3) at (v11) {};

\node[label=below left:{\scriptsize $L_{[n-1]\setminus 1,k,2}$}, draw=black,fill,circle,inner sep=0pt,minimum size=3pt] (v111) at (p3.corner 3) {};

\node[] (e111) at ([xshift= 20pt, yshift=-15]v111) {\scriptsize $x_2$};

\node[draw=black,minimum size=2.2cm,regular polygon,regular polygon sides=3,rotate=0, anchor=corner 3] (t7) at (v111) {};

\node[] (T11) at ([yshift=-3.5pt, xshift=0pt]t7) {\scriptsize $T_{n-2,k-2}$};

\node[label=left:{\scriptsize $L_{[n-1]\setminus 1,k,3}$}, draw=black,fill,circle,inner sep=0pt,minimum size=3pt] (v112) at (p3.corner 2) {};

\node[] (e112) at ([xshift= 17pt, yshift=-25]v112) {\scriptsize $x_3$};

\node[draw=black,minimum size=2.2cm,regular polygon,regular polygon sides=3,rotate=-40, anchor=corner 3] (t8) at (v112) {};

\node[] (T12) at ([yshift=-2pt, xshift=-2pt]t8) {\scriptsize $T_{n-2,k-2}$};

\node[label=right:{\scriptsize $L_{[n-1]\setminus 1,k,n-1}$}, draw=black,fill,circle,inner sep=0pt,minimum size=3pt] (v113) at (p3.corner 1) {};

\node[] (e113) at ([xshift= 0pt, yshift=-30]v113) {\scriptsize $x_{n-1}$};

\node[draw=black,minimum size=2.2cm,regular polygon,regular polygon sides=3,rotate=45, anchor=corner 2] (t9) at (v113) {};

\node[] (T13) at ([yshift=0pt, xshift=2pt]t9) {\scriptsize $T_{n-2,k-2}$};

\node[draw=white,minimum size=4cm,regular polygon,regular polygon sides=8] (p4) at (v13) {};

\node[label=above right:{\scriptsize $L_{[n-1]\setminus 3,k,1}$}, draw=black,fill,circle,inner sep=0pt,minimum size=3pt] (v131) at (p4.corner 7) {};

\node[] (e131) at ([xshift= -18pt, yshift=12]v131) {\scriptsize $x_{1}$};

\node[draw=black,minimum size=2.2cm,regular polygon,regular polygon sides=3,rotate=65, anchor=corner 1] (t10) at (v131) {};

\node[] (T31) at ([yshift=3pt, xshift=-2pt]t10) {\scriptsize $T_{n-2,k-2}$};

\node[label=right:{\scriptsize $L_{[n-1]\setminus 3,k,2}$}, draw=black,fill,circle,inner sep=0pt,minimum size=3pt] (v132) at (p4.corner 6) {};

\node[] (e132) at ([xshift= -16pt, yshift=25]v132) {\scriptsize $x_{2}$};

\node[draw=black,minimum size=2.2cm,regular polygon,regular polygon sides=3,rotate=25, anchor=corner 1] (t11) at (v132) {};

\node[] (T32) at ([yshift=3pt, xshift=3pt]t11) {\scriptsize $T_{n-2,k-2}$};

\node[label=left:{\scriptsize $L_{[n-1]\setminus 3,k,n-1}$}, draw=black,fill,circle,inner sep=0pt,minimum size=3pt] (v133) at (p4.corner 5) {};

\node[] (e133) at ([xshift= -3pt, yshift=22]v133) {\scriptsize $x_{n-1}$};

\node[draw=black,minimum size=2.2cm,regular polygon,regular polygon sides=3,rotate=-25, anchor=corner 1] (t12) at (v133) {};

\node[] (T33) at ([yshift=0pt, xshift=-3pt]t12) {\scriptsize $T_{n-2,k-2}$};

\draw (v1) -- (v12);
\draw (v1) -- (v13);
\draw (v1) -- (v11);
\draw (v1) -- (v1n);

\draw (v12) -- (v21);
\draw (v12) -- (v22);
\draw (v12) -- (v23);

\draw (v1n) -- (vn1);
\draw (v1n) -- (vn2);
\draw (v1n) -- (vn3);

\draw (v11) -- (v111);
\draw (v11) -- (v112);
\draw (v11) -- (v113);

\draw (v13) -- (v131);
\draw (v13) -- (v132);
\draw (v13) -- (v133);

\draw [dotted,domain=230:310] plot ({cos(\x)}, {-3.3 + sin(\x)});

\draw [dotted,domain=50:130] plot ({cos(\x)}, {3.3 + sin(\x)});

\draw [dotted,domain=-40:40] plot ({3.3 + cos(\x)}, {sin(\x)});

\draw [dotted,domain=180:270] plot ({2*cos(\x)}, {2*sin(\x)});

\draw [dotted,domain=140:215] plot ({-3.3 + cos(\x)}, {sin(\x)});

\end{tikzpicture} 
\caption{The length k-truncated path tree $T_{n,k}$ of $K_n$ labelled by linear change of variables.}
    \label{truncpathtree}
\end{figure}

\begin{theorem} \label{elemspec}
Hyperbolicity cones of elementary symmetric polynomials are spectrahedral.
\end{theorem}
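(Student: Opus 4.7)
The plan is to invoke the remark following Theorem \ref{matchconespec}: it suffices to realize (a scalar multiple of) $e_k(x_1,\dots,x_n)$ as a factor of $\mu(T, \xx \oplus \ww)$ for some graph $T$, after a linear change of variables, while ensuring that the complementary factor has a hyperbolicity cone containing $\Lambda_{++}(e_k, \one)$. The candidate graph is the length-$k$-truncated path tree $T_{n,k}$ of $K_n$ depicted in Figure~\ref{truncpathtree}, and the change of variables is the one encoded by the labels there.

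First I would define $T_{n,k}$ precisely: its vertices correspond to simple paths $(v_1,\dots,v_j)$ in $K_n$ of length at most $k$, and two vertices are adjacent if one extends the other by a single step. I would then specify the linear change of variables: each edge weight $w_e$ descending into a vertex whose path terminates at $i \in [n]$ is sent to $x_i$ (as indicated in the figure), and the vertex variables are assigned a highly symmetric substitution — by analogy with the star-graph example (2), a natural choice is $x_p \mapsto e_1(\xx)$. The central computational step is then to prove by induction on $k$ that, after this substitution,
\[
\mu(T_{n,k}, \xx \oplus \ww) \;=\; c_{n,k}\, q_{n,k}(\xx)\, e_k(\xx),
\]
for some nonzero constant $c_{n,k}$ and some polynomial $q_{n,k}$ expressible as a product of elementary symmetric polynomials $e_j(\xx)$ with $j<k$ (and/or powers of $e_1(\xx)$).

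The inductive step uses the matching recursion of Lemma~\ref{matchrec} applied at the root of $T_{n,k}$: deleting the root yields a forest whose components are smaller truncated path trees, while additionally deleting a neighbor exposes the $T_{n-2,k-2}$ substructures visible in the figure, together with the multiplicative identity of $\mu$ across disjoint unions. Under the symmetric substitution the resulting combinatorial decomposition mirrors the Pascal-type recursion
\[
e_k(x_1,\dots,x_n) \;=\; e_k(x_1,\dots,x_{n-1}) + x_n\, e_{k-1}(x_1,\dots,x_{n-1}),
\]
which is what allows the factor $e_k(\xx)$ to be extracted cleanly.

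For the hyperbolicity-cone containment $\Lambda_{++}(e_k, \one) \subseteq \Lambda_{++}(q_{n,k}, \one)$, I would iterate Lemma~\ref{derivhyp}: since $D_{\one}e_j = (n-j+1)\,e_{j-1}$, the chain $\Lambda_{++}(e_k,\one) \subseteq \Lambda_{++}(e_{k-1},\one) \subseteq \cdots \subseteq \Lambda_{++}(e_1,\one)$ holds, which dominates every elementary symmetric factor appearing in $q_{n,k}$. Combining these ingredients with Theorem~\ref{matchconespec} and the remark following it concludes that $\Lambda_{+}(e_k)$ is spectrahedral. The main obstacle, to my mind, is the combinatorial bookkeeping in the factorization step: one must track the recursive structure of $T_{n,k}$ together with the symmetric substitution carefully enough to verify that $e_k(\xx)$ emerges as a clean factor (with a complementary factor built purely from lower $e_j$'s), rather than some polynomial of the right degree and symmetry type but without the desired divisibility.
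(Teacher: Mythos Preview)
Your overall strategy matches the paper's exactly: the graph is indeed the length-$k$-truncated path tree $T_{n,k}$ of $K_n$, one makes a linear substitution so that $e_k(\xx)$ emerges as a factor of the resulting matching polynomial, and the cone containment is handled via Lemma~\ref{derivhyp}. The gap is in the substitution itself. The uniform choice $x_p \mapsto e_1(\xx)$ does \emph{not} produce $e_k(\xx)$ as a factor. Already for $n=3$, $k=2$ (so $T_{3,2}$ is the five-vertex tree consisting of a root with two children, each having one child), your substitution together with the edge labelling from Figure~\ref{truncpathtree} yields a polynomial which at $(x_1,x_2,x_3)=(1,1,-\tfrac12)$ evaluates to $-\tfrac{45}{32}$, whereas $e_2(1,1,-\tfrac12)=0$. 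The paper instead uses a position-dependent substitution: the vertex at a node indexed by $(S,k,i)$ receives
\[
L_{S,k,i} \;=\; \tfrac{1}{C_{k-1}}\,e_1(S\setminus i) + C_k\,x_i,
\]
where the constants satisfy $C_kC_{k-1}=k$. These coefficients are precisely what is needed so that the key identity $e_1(T)e_{k-1}(T) - \sum_{j\in T}x_j^2\,e_{k-2}(T\setminus j) = k\,e_k(T)$ (not the Pascal recursion) combines with $e_k(S\setminus i)+x_i\,e_{k-1}(S\setminus i)=e_k(S)$ to extract the factor $e_k(S)$ cleanly at each level.

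There is a second, smaller gap. Because the substitution depends on the subset $S$ of ``unvisited'' vertices at each node, the complementary factor $q_{n,k}$ is a product of polynomials $e_j(T)$ for various \emph{proper subsets} $T\subsetneq[n]$, not just $e_j(\xx)=e_j([n])$. Your chain $\Lambda_{++}(e_k,\one)\subseteq\cdots\subseteq\Lambda_{++}(e_1,\one)$ handles only the full-variable case. One additionally needs $\Lambda_{++}(e_k(S),\one)\subseteq\Lambda_{++}(e_k(T),\one)$ for $T\subseteq S$, which follows from Lemma~\ref{derivhyp} applied to the coordinate directions, since $e_k(T)=\big(\prod_{i\in S\setminus T}\partial/\partial x_i\big)e_k(S)$.
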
  
\begin{proof}
For a subset $S \subseteq [n]$ we shall use the notation
\begin{align*}
\displaystyle e_k(S) = \sum_{\substack{T \subseteq S \\ |T| = k}} \prod_{j \in T} x_j.
\end{align*} \noindent
We show that $e_k(\mathbf{x}) = e_k([n])$ divides the multivariate matching polynomial of the length $k$-truncated path tree $T_{n,k}$ of the complete graph $K_n$ rooted at a vertex $v$ after a linear change of variables. 
Let $(C_k)_{k \geq 0}$ denote the real sequence defined by
\begin{align*}
\displaystyle C_{0} = 1, \thickspace C_{1} = 1, \thickspace C_k = \prod_{j = 0}^{\lfloor k/2 \rfloor - 1} \frac{k - 2j}{k - 2j-1} \hspace{0.2cm} \text{ for } k \geq 2,
\end{align*} \noindent
so that
\begin{align*}
\displaystyle C_{k}C_{k-1} = k \text{ for all } k \geq 1.
\end{align*} \noindent
Consider the family 
\begin{align*}
\displaystyle M_{S,k,i} = \mu(T_{|S|, k}, \phi_{S,k,i}(\mathbf{x} \oplus \mathbf{w})) 
\end{align*} \noindent
of multivariate matching polynomials where $i \in S, \thinspace k \in \mathbb{N}$ and $\phi_{S,k,i}$ is the linear change of variables defined recursively (see Fig \ref{truncpathtree}) via
\begin{enumerate}
\item $\phi_{S,0,i}$ is the map $x_v \mapsto e_1(S)$ for all $S \subseteq [n]$ and $i \in S$.
\item $x_{v} \mapsto L_{S,k,i}$ if $k \geq 1$ where 
$$
L_{S,k,i} = \frac{1}{C_{k-1}}e_1(S \setminus i) + C_k x_i
$$ 
and $x_v$ is the variable corresponding to the root of $T_{n,k}$.
\item $w_{e_j} \mapsto x_j$ for $j \in S \setminus i$ where $w_{e_j}$ are the variables corresponding to the edges $e_j$ incident to the root of $T_{n,k}$.
\item For each $j \in S \setminus i$ make recursively the linear substitutions $\phi_{S \setminus i, k-1, j}$ respectively to the variables corresponding to the $j$-indexed copies of the subtrees of $T_{n,k}$ isomorphic to $T_{n-1,k-1}$.
\end{enumerate} \noindent
We claim
\begin{align*}
\displaystyle
M_{S,0,i} &= e_1(S), \\   
M_{S,k,i} &= \frac{C_k e_k(S)}{e_{k-1}(S \setminus \{i\})} \prod_{j \in S \setminus \{i\}} M_{S \setminus i, k-1, j}
\end{align*} \noindent
for all $S \subseteq [n], \thinspace i \in S$ and $k \in \mathbb{N}$ by induction on $k$. Clearly $M_{S,0,i} = e_1(S)$ since $\mu(T_{n,0}, \mathbf{x} \oplus \mathbf{w}) = x_v$.
By Lemma \ref{matchrec} and induction we have
\begin{align*}
\displaystyle &M_{S,k,i} \\ &= L_{S,k,i} \prod_{s \in S \setminus \{i \}} M_{S \setminus i, k - 1, s} - \sum_{j \in S \setminus i} x_j^2 \prod_{s \in S \setminus \{i,j\}} M_{S \setminus i, k - 1, s} \thinspace M_{S \setminus \{i,j \}, k - 2, s}
\\ &= \left ( \frac{1}{C_{k-1}}e_1(S \setminus i) + C_kx_i - \sum_{j \in S \setminus \{i\}} x_j^2 \thinspace \frac{e_{k-2}(S \setminus \{i,j \})}{C_{k-1}e_{k-1}(S \setminus i)} \right )  \prod_{s \in S \setminus \{i\}} M_{S \setminus i, k-1, s} \\ &=
 \frac{1}{e_{k-1}(S \setminus i)} \left ( \left (\frac{1}{C_{k-1}}e_1(S \setminus i) + C_k x_i \right ) e_{k-1}(S \setminus i) - \frac{1}{C_{k-1}}\sum_{j \in S \setminus i} x_j^2 e_{k-2}(S \setminus \{i,j \})  \right ) \\ & \times  \prod_{s \in S \setminus \{i\}} M_{S \setminus i, k-1, s} \\ &= \frac{1}{e_{k-1}(S \setminus i)} \left ( \frac{k}{C_{k-1}} e_{k}(S \setminus i) + C_kx_i e_{k-1}(S \setminus i) \right ) \prod_{s \in S \setminus \{i\}} M_{S \setminus i, k-1, s} \\ &=  \frac{C_k e_k(S)}{e_{k-1}(S \setminus i)} \prod_{s \in S \setminus \{i\}} M_{S \setminus i, k-1, s}.
\end{align*} \noindent
Unwinding the above recursion it follows that
$M_{S,k,i}$ is of the form
\begin{align*}
\displaystyle M_{S,k,i} = C e_k(S) \prod_{\substack{T \subseteq S \setminus i \\ |T| > |S| - k}} e_{k + |T| - |S|}(T)^{\alpha_T}
\end{align*} \noindent
for some constant $C$ and exponents $\alpha_T \in \mathbb{N}$ . Taking $S = [n]$ we thus see that $e_k(\mathbf{x})$ is a factor of the multivariate matching polynomial $M_{[n], k, n}$. It remains to show that
\begin{align*}
\displaystyle \Lambda_{++}(e_k(\mathbf{x}), \mathbf{1}) \subseteq \Lambda_{++} \left (\frac{M_{[n],k,n}}{e_k(\mathbf{x})}, \mathbf{1} \right ).
\end{align*} \noindent
for all $k \leq n$.
By Lemma \ref{derivhyp} above inclusion follows from the fact that
\begin{align*}
\displaystyle \Lambda_{++}(e_k(S), \mathbf{1}) \subseteq \Lambda_{++}(e_{k-1}(S), \mathbf{1})
\end{align*} \noindent
for all $k \geq 1$ since $D_{\mathbf{1}} e_{k}(S) = (|S|-k)e_{k-1}(S)$, and from the fact that
\begin{align*}
\displaystyle \Lambda_{++}(e_k(S), \mathbf{1}) \subseteq \Lambda_{++} \left (e_k(T), \mathbf{1} \right )
\end{align*} \noindent
for all $T \subseteq S$ since $e_k(T) = \left ( \prod_{i \in S \setminus T} \frac{\partial}{\partial x_i} \right ) e_k(S)$. Hence $\Lambda_{++}(e_k(\mathbf{x}), \mathbf{1})$ is spectrahedral by Theorem \ref{matchconespec}.

\end{proof} \noindent
\section{Hyperbolicity cones of multivariate independence polynomials} \label{sec::indep} \noindent
A subset $I \subseteq V(G)$ is \textit{independent} if no two vertices of $I$ are adjacent in $G$. Let $\mathcal{I}(G)$ denote the set of all independent sets in $G$ and $i(G,k)$ denote the number of independent sets in $G$ of size $k$. By convention $i(G,0) = 1$. The (univariate) \textit{independence polynomial} is defined by
$$
I(G,t) = \sum_{k \geq 0} i(G,k) t^k.
$$ 
The \textit{line graph} $L(G)$ of $G$ is the graph having vertex set $E(G)$ and where two vertices in $L(G)$ are adjacent if and only if the corresponding edges in $G$ are incident. It follows that $\mu(G,t) = t^{|V(G)|}I(L(G),-t^{-2})$. Therefore the independence polynomial can be viewed as a generalization of the matching polyomial. In contrast to the matching polynomial, the independence polynomial of a graph is not real-rooted in general. However Chudnovsky and Seymour \cite{CS} proved that $I(G,t)$ is real-rooted if $G$ is claw-free, that is, if $G$ has no induced subgraph isomorphic to the complete bipartite graph $K_{1,3}$.  The theorem was later generalized by Engstr\"om to graphs with weighted vertices.
\begin{theorem}[Engstr\"om \cite{Eng}] \label{engstrom}
Let $G$ be a claw-free graph and $\boldsymbol{\lambda} = (\lambda_v)_{v \in  V(G)}$ a sequence of non-negative vertex weights. Then the polynomial
$$
I_{\boldsymbol{\lambda}}(G,t) =  \sum_{I \in \mathcal{I}(G)} \left ( \prod_{v \in I} \lambda_v \right ) t^{|I|}
$$
is real-rooted.
\end{theorem}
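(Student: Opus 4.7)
The plan is to reduce Theorem \ref{engstrom} to the unweighted Chudnovsky--Seymour theorem via a vertex blow-up construction, and then pass from positive integer weights to arbitrary non-negative real weights by continuity.

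I would begin with the case when every $\lambda_v$ is a positive integer. Construct $G^{\ast}$ by replacing each $v \in V(G)$ by a clique $C_v$ on $\lambda_v$ vertices, with edges across distinct cliques $C_u, C_v$ placed exactly when $uv \in E(G)$. An independent set of $G^{\ast}$ picks at most one vertex from each $C_v$, and each independent set $I \in \mathcal{I}(G)$ lifts to exactly $\prod_{v \in I} \lambda_v$ independent sets of $G^{\ast}$, giving the identity
\begin{align*}
I(G^{\ast}, t) \;=\; I_{\boldsymbol{\lambda}}(G, t).
\end{align*}

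The technical heart of the argument is verifying that $G^{\ast}$ is claw-free. Suppose $G^{\ast}$ has an induced claw with center $a$ and leaves $b_1, b_2, b_3$. Since each $C_v$ is a clique and the leaves are pairwise non-adjacent, the $b_i$ lie in three distinct cliques $C_{v_1}, C_{v_2}, C_{v_3}$ with the $v_i$ pairwise non-adjacent in $G$. Let $a \in C_{v_0}$. If $v_0 \notin \{v_1,v_2,v_3\}$, then $a \sim b_i$ in $G^{\ast}$ forces $v_0 \sim v_i$ in $G$ for each $i$, so $\{v_0; v_1, v_2, v_3\}$ is already a claw in $G$, contradicting the hypothesis. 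If instead $v_0 = v_i$ (say $i = 1$), then $a \sim b_2$ gives $v_1 \sim v_2$ in $G$, which in turn forces $b_1 \in C_{v_1}$ to be adjacent to $b_2 \in C_{v_2}$ in $G^{\ast}$, contradicting the claw structure. Hence $G^{\ast}$ is claw-free, and Chudnovsky--Seymour applied to $I(G^{\ast}, t)$ settles the integer case.

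For positive rational weights $\lambda_v = p_v/q$, the rescaling identity $I_{\boldsymbol{\lambda}}(G, t) = I_{\mathbf{p}}(G, t/q)$ preserves real-rootedness, so the rational case follows from the integer case. For a general $\boldsymbol{\lambda} \in \mathbb{R}_{\geq 0}^{V(G)}$, approximate by a sequence of positive rational weight vectors; since $I_{\boldsymbol{\lambda}}(G, t)$ depends polynomially on $\boldsymbol{\lambda}$ and the set of real-rooted polynomials of bounded degree is closed under coefficient-wise limits (Hurwitz's theorem), the conclusion extends to all non-negative real weights. The main obstacle is the blow-up/claw-freeness step, and specifically the case $v_0 \in \{v_1, v_2, v_3\}$, where one must exploit that the center and a leaf lying in the same clique produce an extra adjacency in $G^{\ast}$ that violates the claw; the remaining steps are routine density and rescaling arguments.
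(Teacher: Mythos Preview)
The paper does not supply its own proof of Theorem~\ref{engstrom}; it is quoted as Engstr\"om's result and used as a black box in the proof of Lemma~\ref{indephyp}. So there is no in-paper argument to compare against, and your proposal should be judged on its own merits.

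Your reduction is correct. The blow-up $G^{\ast}$ (replace $v$ by a clique of size $\lambda_v$, join cliques completely whenever the underlying vertices are adjacent) does satisfy $I(G^{\ast},t)=I_{\boldsymbol{\lambda}}(G,t)$ for positive integer $\boldsymbol{\lambda}$, and your claw-freeness check is sound: the case split on whether the center lies in one of the three leaf-cliques is exactly the point where the argument could go wrong, and you handle it correctly by observing that $v_0=v_1$ forces $v_1\sim v_2$ in $G$ (from $a\sim b_2$), contradicting the pairwise non-adjacency of $v_1,v_2,v_3$ already deduced from the leaves. The passage to rationals via the homogeneity identity $I_{\boldsymbol{\lambda}}(G,t)=I_{\mathbf p}(G,t/q)$ and then to non-negative reals by Hurwitz is routine; note that the constant term of $I_{\boldsymbol{\lambda}}(G,t)$ is $1$, so the limit polynomial is never identically zero and Hurwitz applies cleanly.

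As a stylistic remark: this blow-up reduction to Chudnovsky--Seymour is elegant and self-contained, but it is worth knowing that Engstr\"om's original argument proceeds differently (via compatible families / interlacing), which yields additional interlacing information rather than bare real-rootedness. For the purposes of this paper only real-rootedness is needed, so your approach is entirely adequate.
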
 \noindent
A full characterization of the graphs for which $I(G,t)$ is real-rooted remains an open problem. 

A natural multivariate analogue of the independence polynomial is given by
$$
I(G, \xx) = \sum_{I \in \mathcal{I}(G)}  \prod_{v \in I} x_v.
$$
Leake and Ryder \cite{LR} define a strictly weaker notion of stability which they call same-phase stability. A polynomial $p(\zz) \in \RR[z_1, \dots, z_n]$ is \textit{(real) same-phase stable} if for every $\xx \in \RR_+^n$, the univariate polynomial $p(t \xx)$ is real-rooted. The authors prove that $I(G, \xx)$ is same-phase stable if and only if $G$ is claw-free. In fact the same-phase stability of $I(G, \xx)$ is an immediate consequence of Theorem \ref{engstrom}. 

The added variables in a homogeneous multivariate independence polynomial should preferably have labels carrying combinatorial meaning in the graph. For line graphs it is additionally desirable to maintain a natural correspondence with the homogeneous multivariate matching polynomial $\mu(G, \xx \oplus \ww)$. Unfortunately we have not found a hyperbolic definition that satisfies both of the above properties. We have thus settled for the following definition.
\begin{definition}
Let $\xx = (x_v)_{v \in V}$ and $t$ be indeterminates. 
Define the \textit{homogeneous multivariate independence polynomial} $I(G,\xx \oplus t) \in \RR[\xx,t]$ by
$$
I(G,\xx \oplus t) = \sum_{I \in \mathcal{I}(G)} (-1)^{|I|} \left ( \prod_{v \in I} x_v^2 \right ) t^{2|V(G)| -2 |I|}.
$$
\end{definition} \noindent
\begin{lemma} \label{indephyp}
If $G$ is a claw-free graph, then $I(G, \xx,t)$ is a hyperbolic polynomial with respect to $\ee = (0, \dots, 0,1) \in \RR^{V(G)} \times \RR$.
\end{lemma}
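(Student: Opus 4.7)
The plan is to reduce the hyperbolicity of $I(G,\xx \oplus t)$ to the real-rootedness of the weighted (univariate) independence polynomial, so that Engstr\"om's theorem (Theorem \ref{engstrom}) does the heavy lifting. First I would check that $I(G,\ee) \neq 0$: substituting $\xx = \mathbf{0}$ and $t = 1$ kills every term except the one coming from the empty independent set, leaving $I(G, \ee) = 1$. Next, fix an arbitrary point $(\yy, s) \in \RR^{V(G)} \times \RR$ and consider the univariate polynomial
\begin{align*}
\tau \mapsto I\bigl(G,\, \tau \ee - (\yy, s)\bigr) = I\bigl(G,\, (-\yy) \oplus (\tau - s)\bigr) = \sum_{I \in \mathcal{I}(G)} (-1)^{|I|} \Bigl( \prod_{v \in I} y_v^2 \Bigr)(\tau - s)^{2|V(G)| - 2|I|}.
\end{align*}
Writing $u = \tau - s$ and $n = |V(G)|$, this equals $q(u)$ where
\begin{align*}
q(u) = \sum_{I \in \mathcal{I}(G)} (-1)^{|I|} \Bigl( \prod_{v \in I} y_v^2 \Bigr) u^{2n - 2|I|},
\end{align*}
so the hyperbolicity claim reduces to showing that $q(u)$ has only real zeros for every choice of $\yy \in \RR^{V(G)}$.

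Set $\bl = (\lambda_v)_{v \in V(G)}$ with $\lambda_v = y_v^2 \geq 0$. A direct manipulation gives the identity
\begin{align*}
q(u) = u^{2n} \sum_{I \in \mathcal{I}(G)} \Bigl( \prod_{v \in I} \lambda_v \Bigr)\bigl(-u^{-2}\bigr)^{|I|} = u^{2n}\, I_{\bl}\bigl(G,\, -u^{-2}\bigr).
\end{align*}
By Theorem \ref{engstrom} the weighted independence polynomial $I_{\bl}(G, s)$ is real-rooted in $s$. Since its coefficients are all non-negative and $I_{\bl}(G, 0) = 1 > 0$, all of its real roots are strictly negative, and it factors as
\begin{align*}
I_{\bl}(G, s) = \prod_{i=1}^{d} (1 + a_i s)
\end{align*}
for some $a_i > 0$, where $d$ is its degree (the maximal size of an independent set $I$ with $\prod_{v \in I} \lambda_v \neq 0$).

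Substituting $s = -u^{-2}$ and clearing denominators yields
\begin{align*}
q(u) = u^{2n}\prod_{i=1}^{d}\Bigl(1 - \frac{a_i}{u^2}\Bigr) = u^{2n - 2d} \prod_{i=1}^{d}(u^2 - a_i),
\end{align*}
whose roots are $u = 0$ (with multiplicity $2n - 2d$) and $u = \pm \sqrt{a_i}$, all real. Hence $\tau \mapsto q(\tau - s)$ has only real zeros, proving that $I(G, \xx \oplus t)$ is hyperbolic with respect to $\ee$. The only delicate point is handling the possible drop in degree of $I_{\bl}(G, s)$ when some $y_v = 0$; this is absorbed into the factor $u^{2n-2d}$, so the argument goes through uniformly. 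The substantive input is Engstr\"om's theorem; everything else is bookkeeping with the substitution $s = -u^{-2}$.
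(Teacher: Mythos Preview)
Your proof is correct and follows essentially the same approach as the paper: both verify $I(G,\ee)=1$, set $\lambda_v = y_v^2$, rewrite the univariate polynomial as $(s-t)^{2n} I_{\bl}(G,-(s-t)^{-2})$, and invoke Engstr\"om's theorem together with the observation that all roots of $I_{\bl}(G,s)$ are negative. Your version is slightly more explicit in factoring $I_{\bl}$ and tracking the degree drop when some $y_v=0$, but the argument is the same.
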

\begin{proof}
First note that $I(G, \ee) = 1 \neq 0$. Let $\xx \oplus t \in \RR^{V(G)} \times \RR$ and $\lambda_v = x_v^2$ for all $v \in V(G)$. Then
$$
I(G, s\ee - \xx \oplus t) = (s-t)^{2|V(G)|} I_{\boldsymbol{\lambda}}(G, -(s-t)^{-2}).
$$
By Theorem \ref{engstrom} the polynomial $I_{\boldsymbol{\lambda}}(G, s)$ is real-rooted. Clearly all roots are negative which implies $I_{\boldsymbol{\lambda}}(G, -s^{-2})$ is real-rooted. Hence the univariate polynomial $s \mapsto I(G, s\ee - \xx \oplus t)$ is real-rooted which shows that $I(G, \xx \oplus t)$ is hyperbolic with respect to $\ee$.

\end{proof} \noindent
An induced clique $K$ in $G$ is called a \textit{simplicial clique} if for all $u \in K$ the induced subgraph $N[u] \cap (G \setminus K)$ of $G \setminus K$ is a clique. In other words the neighbourhood of each $u \in K$ is a disjoint union of two induced cliques in $G$. Furthermore, a graph $G$ is said to be \textit{simplicial} if $G$ is claw-free and contains a simplicial clique.
 
In this section we prove Conjecture \ref{glc} for the polynomial $I(G, \xx,t)$ when $G$ is simplicial. The proof unfolds in a parallel manner to Theorem \ref{matchconespec} by considering a different kind of path tree. Before the results can be stated we must outline the necessary definitions from \cite{LR}.

A connected graph $G$ is a \textit{block graph} if each $2$-connected component is a clique. 
Given a simplicial graph $G$ with a simplicial clique $K$ we recursively define a block graph $T^{\boxtimes}(G,K)$ called the \textit{clique tree} associated to $G$ and rooted at $K$ (see Figure \ref{clfig}). 

We begin by adding $K$ to $T^{\boxtimes}(G,K)$. Let $K_u = N[u] \setminus K$ for each $u \in K$.
Attach the disjoint union $\bigsqcup_{u \in K} K_u$ of cliques to $T^{\boxtimes}(G,K)$ by connecting $u \in K$ to every $v \in K_u$. Finally recursively attach $T^{\boxtimes}(G\setminus K,K_u)$ to the clique $K_u$ in $T^{\boxtimes}(G,K)$ for every $u \in K$. Note that the recursion is made well-defined by the following lemma. 
\begin{lemma}[Chudnovsky-Seymour \cite{CS}]
Let $G$ be a clawfree graph and let $K$ be a simplicial clique in $G$. Then $N[u] \setminus K$ is a simplicial clique in $G \setminus K$ for all $u \in K$.
\end{lemma}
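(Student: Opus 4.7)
The plan is to verify that $K' := N[u] \setminus K$ satisfies the two defining properties of a simplicial clique in $H := G \setminus K$. I first unpack a few set-theoretic identities. Since $u \in K$,
\[
N[u] \cap (G \setminus K) = N(u) \setminus K = K',
\]
and since $K$ is a clique containing $u$, we have $K \subseteq N[u]$, which gives $(G \setminus K) \setminus K' = G \setminus N[u]$. For $v \in K'$ the closed neighbourhood $N_H[v]$ equals $N_G[v] \setminus K$, and intersecting with $G \setminus N[u]$ leaves $N_G(v) \setminus N[u]$ (the vertex $v$ itself drops out because $v \in N[u]$). Thus the simpliciality of $K'$ in $H$ reduces to showing (a) that $K'$ is an induced clique in $H$, and (b) that $N_G(v) \setminus N[u]$ is a clique for every $v \in K'$.

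Part (a) is immediate from the hypothesis that $K$ is simplicial in $G$, applied at the vertex $u \in K$: by definition $N[u] \cap (G \setminus K) = K'$ is a clique, and being a clique is preserved in any induced subgraph containing it.

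Part (b) is the heart of the argument and is where claw-freeness enters. Fix $v \in K'$, so that $uv \in E(G)$ and $v \notin K$, and take distinct $x, y \in N_G(v) \setminus N[u]$. Then $u, x, y$ are pairwise distinct neighbours of $v$, and $ux, uy \notin E(G)$ by the definition of $N[u]$. If in addition $xy \notin E(G)$, the induced subgraph on $\{v, u, x, y\}$ is an induced $K_{1,3}$ with centre $v$, contradicting the claw-freeness of $G$. Hence $xy \in E(G)$, establishing (b).

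The main step to get right is the bookkeeping in the first paragraph; once the simpliciality condition for $K'$ in $H$ has been rewritten as the internal claim about $N_G(v) \setminus N[u]$ in $G$, the remaining content is just a single application of claw-freeness to a four-vertex induced subgraph, and I do not anticipate any genuine obstacle.
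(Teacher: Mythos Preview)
The paper does not give its own proof of this lemma; it is stated with attribution to Chudnovsky--Seymour \cite{CS} and used as a black box to ensure that the recursive construction of the clique tree $T^{\boxtimes}(G,K)$ is well-defined. So there is no in-paper argument to compare against.

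Your proof is correct. The set-theoretic reductions in the first paragraph are accurate: since $u\in K\subseteq N[u]$ one has $K'=N[u]\cap(G\setminus K)$ and $(G\setminus K)\setminus K'=G\setminus N[u]$, and for $v\in K'$ the condition $N_H[v]\cap(H\setminus K')$ indeed simplifies to $N_G(v)\setminus N[u]$. Part~(a) is exactly the simpliciality of $K$ at $u$, and part~(b) is a clean single use of claw-freeness at the centre $v$ with leaves $u,x,y$. This is the standard short argument, and nothing is missing.
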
 \noindent

\begin{figure} 
    \centering
\begin{tabular}{ cc }
\begin{tikzpicture}
\node[draw=white,minimum size=3cm,regular polygon,regular polygon sides=6] (p) {};

\node[label={\scriptsize a}, draw=black,fill,circle,inner sep=0pt,minimum size=3pt] (a) at (p.corner 1) {};

\node[label={\scriptsize b}, draw=black,fill,circle,inner sep=0pt,minimum size=3pt] (b) at (p.corner 2) {};

\node[label=left:{\scriptsize c}, draw=black,fill,circle,inner sep=0pt,minimum size=3pt] (c) at (p.corner 3) {};

\node[label=below:{\scriptsize d}, draw=black,fill,circle,inner sep=0pt,minimum size=3pt] (d) at (p.corner 4) {};

\node[label=below:{\scriptsize e}, draw=black,fill,circle,inner sep=0pt,minimum size=3pt] (e) at (p.corner 5) {};

\node[label=right:{\scriptsize f}, draw=black,fill,circle,inner sep=0pt,minimum size=3pt] (f) at (p.corner 6) {};

\draw[red] (a) -- (c);
\draw[red] (a) -- (b);
\draw[red] (b) -- (c);
\draw (c) -- (d);
\draw (d) -- (e);
\draw (e) -- (f);
\draw (a) -- (f);

\draw (a) -- (e);
\draw (b) -- (d);
\draw (b) -- (f);
\draw (c) -- (e);
\draw (d) -- (f);

\end{tikzpicture} 
\\ $G$ \\ \\
\begin{tikzpicture}
\node[draw=red,minimum size=2cm,regular polygon,regular polygon sides=3] (t1) {};
\node[label=left:{\scriptsize b}, draw=black,fill,circle,inner sep=0pt,minimum size=3pt] (s1) at (t1.corner 1) {};
\node[label=above left:{\scriptsize c}, draw=black,fill,circle,inner sep=0pt,minimum size=3pt] (b1) at (t1.corner 2) {};
\node[label=above right:{\scriptsize a}, draw=black,fill,circle,inner sep=0pt,minimum size=3pt] (a1) at (t1.corner 3) {};

\node[draw=black,minimum size=2cm,regular polygon, rotate=180, regular polygon sides=3] (t2) at (0,2) {};
\node[label=right:{\scriptsize d}, draw=black,fill,circle,inner sep=0pt,minimum size=3pt] (d3) at (t2.corner 2) {};
\node[label=left:{\scriptsize f}, draw=black,fill,circle,inner sep=0pt,minimum size=3pt] (f3) at (t2.corner 3) {};

\node[draw=black,minimum size=2cm,regular polygon, rotate=180, regular polygon sides=3] (t3) at (1.75,-1) {};
\node[label=right:{\scriptsize e},draw=black,fill,circle,inner sep=0pt,minimum size=3pt] (e2) at (t3.corner 1) {};
\node[label=above:{\scriptsize f}, draw=black,fill,circle,inner sep=0pt,minimum size=3pt] (f2) at (t3.corner 2) {};

\node[draw=black,minimum size=2cm,regular polygon, rotate=180, regular polygon sides=3] (t4) at (-1.75,-1) {};
\node[label=left:{\scriptsize e},draw=black,fill,circle,inner sep=0pt,minimum size=3pt] (e1) at (t4.corner 1) {};
\node[label=above:{\scriptsize d}, draw=black,fill,circle,inner sep=0pt,minimum size=3pt] (d1) at (t4.corner 3) {};

\node[label=above:{\scriptsize f},draw=black,fill,circle,inner sep=0pt,minimum size=3pt] (fd) at ([yshift = 0.05pt, xshift=-45pt]d1) {};
\draw (d1) -- (fd);

\node[label=left:{\scriptsize f},draw=black,fill,circle,inner sep=0pt,minimum size=3pt] (fe) at (-2.4, -3.2) {};
\draw (e1) -- (fe);

\node[label=right:{\scriptsize d},draw=black,fill,circle,inner sep=0pt,minimum size=3pt] (de) at (2.4, -3.2) {};
\draw (e2) -- (de);

\node[label=above:{\scriptsize d},draw=black,fill,circle,inner sep=0pt,minimum size=3pt] (df) at ([yshift = 0.05pt, xshift=45pt]f2) {};
\draw (f2) -- (df);

\node[label=right:{\scriptsize e},draw=black,fill,circle,inner sep=0pt,minimum size=3pt] (ed) at (1.7, 4) {};
\draw (d3) -- (ed);

\node[label=left:{\scriptsize e},draw=black,fill,circle,inner sep=0pt,minimum size=3pt] (ef) at (-1.7, 4) {};
\draw (f3) -- (ef);

\end{tikzpicture}
\\ 
$T^\boxtimes(G, \{a,b,c\})$
\end{tabular}
\caption{A simplicial graph $G$ and its associated relabelled clique tree $T^{\boxtimes}(G,K)$ rooted at $K = \{a,b,c \}$ (highlighted in red).}
    \label{clfig}
\end{figure}
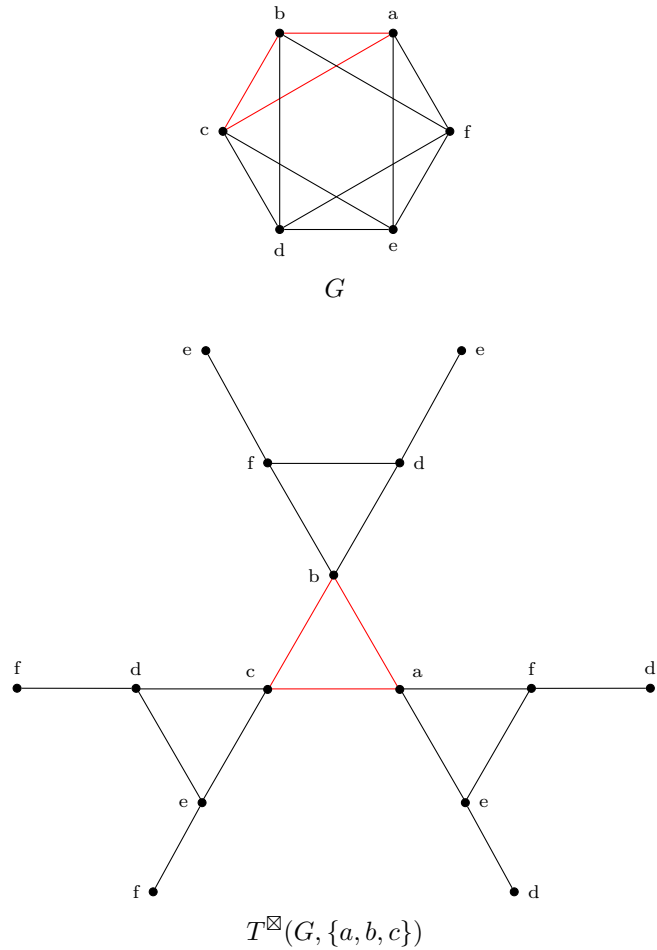

It is well-known that a graph is the line graph of a tree if and only if it is a claw-free block graph \cite[Thm 8.5]{Har}. In \cite{LR} it was demonstrated that the block graph $T^{\boxtimes}(G,K)$ is the line graph of a certain \textit{induced path tree} $T^{\angle}(G,K)$. Its precise definition is not important to us, but we remark that it is a subtree of the usual path tree defined in Section \ref{sec::matching} that avoids traversed neighbours. This enables us to find a definite determinantal representation of $I(T^{\boxtimes}(G,K), \xx \oplus t)$ via Lemma \ref{treedetrep}.
The second important fact is that $I(G, \xx)$ divides $I(T^{\boxtimes}(G,K), \xx)$ where $T^{\boxtimes}(G,K)$ is relabelled according to the natural graph homomorphism $\phi_K: T^{\boxtimes}(G,K) \to G$. 
Hence using the recursion provided by the simplicial structure of $G$ we have almost all the ingredients to finish the proof of Conjecture \ref{glc} for $I(G, \xx \oplus t)$.   

\begin{lemma}[Leake-Ryder \cite{LR})] \label{linecltree} \noindent \newline
For any simplicial graph $G$, and any simplicial clique $K \leq G$, we have 
$$
L(T^{\angle}(G,K)) \cong T^{\boxtimes}(G,K).
$$
\end{lemma}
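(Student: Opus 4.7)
The plan is to proceed by induction on $|V(G)|$, exploiting the parallel recursive structures of $T^{\angle}(G,K)$ and $T^{\boxtimes}(G,K)$. The base case is $V(G) = K$, so $G$ is the complete graph on $K$. Here the clique tree $T^{\boxtimes}(G,K)$ consists of the single block $K$, while $T^{\angle}(G,K)$ is a star $K_{1,|K|}$ whose root corresponds to the empty induced path from $K$ and whose $|K|$ leaves correspond to the one-vertex induced paths $(u)$ for $u \in K$. Since $L(K_{1,|K|}) = K_{|K|}$, identifying each vertex of the line graph with the corresponding $u \in K$ yields the isomorphism.

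For the inductive step, decompose both trees. On the $T^{\boxtimes}$ side: the root block is $K$; for each $u \in K$, the block $\{u\} \cup K_u$ is attached at $u$; and $T^{\boxtimes}(G \setminus K, K_u)$ is attached at the clique $K_u$. On the $T^{\angle}$ side: there is a root vertex $r$ joined to $|K|$ vertices $c_u$, one for each $u \in K$, and each subtree dangling from $c_u$ (together with $c_u$ itself) is exactly $T^{\angle}(G \setminus K, K_u)$, with the Chudnovsky--Seymour lemma ensuring $K_u$ is simplicial in $G \setminus K$ so the recursion is well defined.

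Now apply the line graph operation. The $|K|$ edges $r c_u$ share the vertex $r$, so in $L(T^{\angle}(G,K))$ they form a $K_{|K|}$-block; identifying $rc_u$ with $u \in K$ realises the root block $K$. At each $c_u$, the edge $rc_u$ together with the edges going down to $T^{\angle}(G \setminus K, K_u)$ form another clique of size $1 + |K_u|$, and identifying $rc_u$ with $u$ and the descending edges with the elements of $K_u$ realises the block $\{u\} \cup K_u$ attached at $u$. The remaining structure below $c_u$ is precisely $L(T^{\angle}(G \setminus K, K_u))$, which by the inductive hypothesis is isomorphic to $T^{\boxtimes}(G \setminus K, K_u)$. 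Gluing these pieces along the matched cut vertices reconstructs $T^{\boxtimes}(G,K)$.

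The main obstacle is the decomposition of $T^{\angle}(G,K)$ into a root joined to the $|K|$ rooted subtrees $T^{\angle}(G \setminus K, K_u)$; this requires unpacking the precise definition of the induced path tree and verifying that induced paths of $G$ starting at $K$ are in bijection (via the choice of first vertex $u \in K$) with induced paths of $G \setminus K$ starting at $K_u$. The ``avoids traversed neighbours'' condition is essential here: once we step from $u \in K$ into $v \in K_u$, future extensions cannot revisit vertices of $K$ or cross between different $K_u$'s, so the $|K|$ branches below $r$ really are disjoint and isomorphic to the smaller induced path trees. Together with the simpliciality of $K$, which forces each $N[u] \setminus K$ to be a clique (so the corresponding blocks in $T^{\boxtimes}$ are as described), this reduces the entire statement to bookkeeping that the block--cut-vertex structure on the $T^{\boxtimes}$ side matches the vertex--edge correspondence produced by $L$ on the $T^{\angle}$ side.
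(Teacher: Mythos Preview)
The paper does not give its own proof of this lemma: it is quoted from Leake--Ryder \cite{LR} and stated without argument, and the paper even remarks that the precise definition of $T^{\angle}(G,K)$ ``is not important to us.'' So there is no in-paper proof to compare your attempt against.

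That said, your inductive argument is the natural one and is essentially how the result is established. The recursive definitions of $T^{\angle}$ and $T^{\boxtimes}$ are set up precisely so that the line-graph functor carries one to the other level by level, and your identification of the root block, the blocks $\{u\}\cup K_u$, and the inductively attached pieces $T^{\boxtimes}(G\setminus K,K_u)$ with the corresponding edge-cliques in $L(T^{\angle}(G,K))$ is correct. The only soft spot is exactly the one you flag: you are assuming a specific recursive description of $T^{\angle}(G,K)$ (a root $r$ joined to vertices $c_u$ for $u\in K$, with the subtree below $c_u$ isomorphic to $T^{\angle}(G\setminus K,K_u)$), and this is not spelled out in the present paper. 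To make the argument self-contained you would need to state that definition and check the decomposition directly; once that is done, the rest is the straightforward block/cut-vertex bookkeeping you describe.
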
 \noindent \newline
The following theorem is a generalization of Godsil's divisibility theorem for matching polynomials. It can be proved in a similar manner by induction using the recursive structure of simplicial graphs and removing cliques instead of vertices. For the proof to go through in the homogeneous setting we must replace the usual recursion by
$$
I(G, \xx \oplus t) = t^{2|K|} I(G \setminus K, \xx \oplus t) - \sum_{v \in K} t^{2|N(v)|} x_v^2 I(G \setminus N[v], \xx \oplus t).
$$
\begin{theorem} [Leake-Ryder \cite{LR})] \label{indepdiv}
Let $K$ be a simplicial clique of the simplicial graph $G$. Then
$$
\frac{I(G, \xx \oplus t)}{I(G \setminus K, \xx \oplus t)} = \frac{I(T^{\boxtimes}(G,K), \xx \oplus t)}{I(T^{\boxtimes}(G,K) \setminus K, \xx \oplus t)},
$$
where $T^{\boxtimes}(G,K)$ is relabelled according to the natural graph homomorphism $\phi_K: T^{\boxtimes}(G,K) \to G$. Moreover $I(G, \xx \oplus t)$ divides $I(T^{\boxtimes}(G,K), \xx \oplus t)$.
\end{theorem}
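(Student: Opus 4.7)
The plan is to adapt the proofs of Lemmas \ref{matchquot} and \ref{matchdiv} to the simplicial/clique-tree setting, substituting the clique-deletion recursion displayed just above the theorem for the vertex-deletion recursion used in the matching case. The argument then splits into two inductions on $|V(G)|$: one establishing the quotient identity, and a subsequent one deducing divisibility.

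First I would verify the displayed recursion combinatorially, by partitioning $\mathcal{I}(G)$ according to whether $I \cap K$ is empty or a singleton $\{v\}$ (the only two possibilities since $K$ is a clique), and bookkeeping the powers of $t$ using $|V(G \setminus K)| = |V(G)| - |K|$ and $|V(G \setminus N[v])| = |V(G)| - |N(v)| - 1$. The same recursion, applied to the root clique $K$ of $T^{\boxtimes}(G,K)$, gives the corresponding identity on the clique-tree side; crucially the exponent $2|N_{T^{\boxtimes}}(v)|$ agrees with $2|N_G(v)|$, since the simplicial-clique condition yields $N_G(v) = (K \setminus \{v\}) \sqcup K_v$ while by construction the $T^{\boxtimes}(G,K)$-neighbours of $v$ are $K \setminus \{v\}$ together with $K_v$.

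For the quotient identity, the base case $G = K$ is immediate. For the inductive step, I would divide both recursions through by $I(G \setminus K, \xx \oplus t)$ and $I(T^{\boxtimes}(G,K) \setminus K, \xx \oplus t)$ respectively; the common $t^{2|K|}$ terms cancel, reducing matters to showing
$$
\frac{I(G \setminus N[v], \xx \oplus t)}{I(G \setminus K, \xx \oplus t)} = \frac{I(T^{\boxtimes}(G,K) \setminus N_{T^{\boxtimes}}[v], \xx \oplus t)}{I(T^{\boxtimes}(G,K) \setminus K, \xx \oplus t)}
$$
for each $v \in K$. The recursive construction of the clique tree yields the disjoint-union decompositions $T^{\boxtimes}(G,K) \setminus K \cong \bigsqcup_{u \in K} T^{\boxtimes}(G \setminus K, K_u)$ and $T^{\boxtimes}(G,K) \setminus N_{T^{\boxtimes}}[v] \cong (T^{\boxtimes}(G \setminus K, K_v) \setminus K_v) \sqcup \bigsqcup_{u \in K \setminus \{v\}} T^{\boxtimes}(G \setminus K, K_u)$. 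Multiplicativity of $I(\cdot, \xx \oplus t)$ under disjoint union then cancels the factors indexed by $u \neq v$, and the inductive hypothesis applied to $G \setminus K$ and its simplicial clique $K_v$ (valid by the Chudnovsky--Seymour lemma) identifies the surviving quotient with $I((G \setminus K) \setminus K_v, \xx \oplus t)/I(G \setminus K, \xx \oplus t)$, which matches the left side since $N[v] = K \sqcup K_v$.

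The divisibility assertion follows by a second induction using the same disjoint-union decomposition: the inductive hypothesis gives that $I(G \setminus K, \xx \oplus t)$ divides $I(T^{\boxtimes}(G \setminus K, K_u), \xx \oplus t)$ for each $u \in K$, and hence divides the product $I(T^{\boxtimes}(G,K) \setminus K, \xx \oplus t)$. Cross-multiplying the quotient identity and cancelling the nonzero polynomial $I(G \setminus K, \xx \oplus t)$ then yields that $I(G, \xx \oplus t)$ divides $I(T^{\boxtimes}(G,K), \xx \oplus t)$. The principal technical obstacle I anticipate is the careful tracking of the $t$-exponents introduced by the homogenization, since this is the main feature distinguishing the present homogeneous setting from Godsil's original univariate argument; once the exponents are checked to agree on both sides of the recursion and to behave correctly under disjoint union, the combinatorics mirrors Godsil's proof exactly.
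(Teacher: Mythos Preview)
Your proposal is correct and follows essentially the same approach as the paper's own (sketched) argument: the paper does not give a full proof of Theorem \ref{indepdiv} but simply states that it is proved by induction in the manner of Lemmas \ref{matchquot} and \ref{matchdiv}, replacing the vertex-deletion recursion by the displayed clique-deletion recursion, which is exactly what you do. Your verification that $|N_{T^{\boxtimes}}(v)| = |N_G(v)|$ for $v \in K$ and your treatment of the disjoint-union decompositions of $T^{\boxtimes}(G,K) \setminus K$ and $T^{\boxtimes}(G,K) \setminus N_{T^{\boxtimes}}[v]$ fill in precisely the details the paper omits.
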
 \noindent
The following lemma ensures the hyperbolicity cones behave well under vertex deletion.
\begin{lemma} \label{hypconedel}
Let $v \in V(G)$. Then $\Lambda_{++}(I(G, \xx \oplus t)) \subseteq \Lambda_{++}(I(G \setminus v, \xx \oplus t))$.
\end{lemma}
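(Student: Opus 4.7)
The plan is to exploit the evenness of $I(G,\xx\oplus t)$ in $x_v$ together with convexity and connectedness of hyperbolicity cones. My first observation is that, since a monomial in the defining sum contains $x_v^2$ exactly when its indexing independent set contains $v$, one has
$$
I(G,\xx\oplus t)\big|_{x_v=0}=t^2\,I(G\setminus v,\xx\oplus t),
$$
while $I(G\setminus v,\xx\oplus t)$ does not involve $x_v$ at all; in particular $I(G,\xx\oplus t)$ is even in $x_v$, and the hyperbolicity cone $\Lambda_{++}(I(G\setminus v,\xx\oplus t))$ is a cylinder in the $x_v$-direction.

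I would then fix $\yy=(\xx,t)\in\Lambda_{++}(I(G,\xx\oplus t))$ and carry out a symmetry-and-convexity move: the linear reflection $\sigma$ sending $x_v\mapsto -x_v$ and fixing every other coordinate fixes both $I(G,\xx\oplus t)$ and $\ee=(\mathbf{0},1)$, so it preserves the connected component of $\ee$ in $\{I(G,\xx\oplus t)\neq 0\}$; thus $\sigma(\yy)\in\Lambda_{++}(I(G,\xx\oplus t))$, and by convexity of the open cone the midpoint
$$
\yy|_{x_v=0}=\tfrac12(\yy+\sigma(\yy))
$$
also belongs to $\Lambda_{++}(I(G,\xx\oplus t))$. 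Since $I(G,\ee)=1$ and $I(G,\xx\oplus t)$ cannot vanish on the connected set $\Lambda_{++}$, it is strictly positive there; evaluating at the midpoint and using the factorization gives
$$
0<I(G,\yy|_{x_v=0})=t^2\,I(G\setminus v,\yy),
$$
where the last equality uses $x_v$-independence of $I(G\setminus v,\xx\oplus t)$. Hence $t\neq 0$ and $I(G\setminus v,\yy)>0$.

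To conclude, I would observe that the above holds for every $\yy\in\Lambda_{++}(I(G,\xx\oplus t))$, so $\Lambda_{++}(I(G,\xx\oplus t))\subseteq\{I(G\setminus v,\xx\oplus t)\neq 0\}$. Since $\Lambda_{++}(I(G,\xx\oplus t))$ is convex (hence connected), contains $\ee\in\Lambda_{++}(I(G\setminus v,\xx\oplus t))$, and $\Lambda_{++}(I(G\setminus v,\xx\oplus t))$ is by definition the connected component of $\ee$ in $\{I(G\setminus v,\xx\oplus t)\neq 0\}$, the desired inclusion follows. The only substantive step is the symmetry-and-convexity trick that replaces $\yy$ by its projection onto the hyperplane $\{x_v=0\}$, where the defining polynomial factors through $I(G\setminus v,\xx\oplus t)$; after that, the remainder is routine from the general features of hyperbolicity cones.
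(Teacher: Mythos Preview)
Your proof is correct and takes a genuinely different route from the paper's argument. The paper proceeds by induction on $|V(G)|$: using the recursion
\[
I(G,\xx\oplus t)=t^{2}I(G\setminus v,\xx\oplus t)-x_v^{2}\,t^{2|N(v)|}I(G\setminus N[v],\xx\oplus t),
\]
it compares the extreme roots of $s\mapsto I(G,s\ee-\xx\oplus t)$ with those of $s\mapsto I(G\setminus v,s\ee-\xx\oplus t)$ and shows the latter are sandwiched by the former, whence the eigenvalue-minimum inequality and the cone inclusion. Your argument instead exploits only the evenness of $I(G,\xx\oplus t)$ in $x_v$ together with convexity of $\Lambda_{++}$: the reflection $x_v\mapsto -x_v$ preserves the cone, so the midpoint with $x_v=0$ lies in it, and the factorization $I(G,\cdot)|_{x_v=0}=t^{2}I(G\setminus v,\cdot)$ then forces $I(G\setminus v,\yy)\neq 0$ on all of $\Lambda_{++}(I(G,\xx\oplus t))$; connectedness finishes. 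Your method is shorter, avoids induction, and is a general fact about hyperbolic polynomials even in a variable (indeed it applies verbatim to $\mu(G,\xx\oplus\ww)$ and the edge variables $w_e$). The paper's approach, while longer, extracts a bit more: an interlacing-type nesting of the extreme eigenvalues, not just the cone inclusion. For the use made of the lemma in Theorem~\ref{indepspec}, your argument is entirely sufficient.
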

\begin{proof}
Let $\xx \oplus t \in \RR^{V(G)} \times \RR$ and $\ee = (0, \dots, 0,1)$. By Lemma \ref{indephyp} the polynomials $s\mapsto I(G, s\ee - \xx \oplus t)$ and $s \mapsto I(G\setminus v, s\ee - \xx \oplus t)$ are both real-rooted. Denote their roots by $\alpha_1, \dots, \alpha_{2n}$ and $\beta_1, \dots, \beta_{2n-2}$ respectively where $n = |V(G)|$. We claim that
$$
\min_i \alpha_i \leq \min_{i} \beta_i \leq \max_{i} \beta_i \leq \max_i \alpha_i 
$$
by induction on the number of vertices of $G$. Indeed the claim is vacuously true if $|V(G)| = 1$.  
Suppose therefore $|V(G)| > 1$. If $G$ is not connected, then $G = G_1 \sqcup G_2$ for some non-empty graphs $G_1,G_2$. Without loss assume $v \in G_1$. Then $G \setminus v = (G_1\setminus v) \sqcup G_2$. By induction the claim holds for the pair $G_1$ and $G_1 \setminus v$. This implies the claim for $G$ and $G \setminus v$ since $I(G, \xx \oplus t)$ is multiplicative with respect to disjoint union.
We may therefore assume $G$ is connected. Thus $G \setminus N[v]$ is of strictly smaller size than $G \setminus v$.
We have
\begin{align} \label{indeprec}
\displaystyle I(G, \xx \oplus t) = t^2 I(G \setminus v, \xx \oplus t) - x_v^2 t^{2|N(v)|} I(G \setminus N[v], \xx \oplus t).
\end{align}
By induction, the maximal root $\gamma$ of $I(G \setminus N[v], s\ee - \xx \oplus t)$ is less than the maximal root $\beta$ of $I(G \setminus v, s\ee - \xx \oplus t)$. Since $I(G \setminus N[v], s\ee - \xx \oplus t)$ is an even degree polynomial with positive leading coefficient we have that $I(G \setminus N[v], s \ee - \xx \oplus t) \geq 0$ for all $s \geq \gamma$. By (\ref{indeprec}) this implies that $I(G, \beta \ee - \xx \oplus t) \leq 0$. Hence $\max_{i} \beta_i \leq \max_{i} \alpha_i$ since $I(G, s \ee - \xx \oplus t) \to \infty$ as $s \to \infty$. Since each of the terms involved in the polynomials $I(G, s \ee - \xx \oplus t)$ and $I(G \setminus v, s \ee - \xx \oplus t)$ have even degree in $s-t$, their respective roots are symmetric about $s = t$. Hence $\min_i \alpha_i \leq \min_i \beta_i$ proving the claim.
Finally if $\xx_0 \oplus t_0 \in \Lambda_{++}(I(G, \xx \oplus t))$, then $\min_i \alpha_i > 0$ so by the claim $\min_i \beta_i > 0$ showing that $\xx_0 \oplus t_0 \in \Lambda_{++}(I(G \setminus v, \xx \oplus t))$. This proves the lemma.

\end{proof}
\begin{remark}
Since 
$$
I(G, \xx \oplus t) \biggr |_{x_v = 0} = t^{2} I(G\setminus v, \xx \oplus t),
$$
we see by Lemma \ref{hypconedel} that setting vertex variables equal to zero relaxes the hyperbolicity cone.
\end{remark}
\begin{theorem} \label{indepspec}
If $G$ is a simplicial graph, then the hyperbolicity cone of $I(G, \xx \oplus t)$ is spectrahedral.  
\end{theorem}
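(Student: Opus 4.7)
The plan is to induct on $|V(G)|$, closely paralleling the proof of Theorem \ref{matchconespec}. For the base case with a single vertex, $I(G, \xx \oplus t) = t^2 - x_v^2$ has as hyperbolicity cone the Lorentz cone, which is spectrahedral. For disconnected $G$, use multiplicativity of $I$ over disjoint unions together with the fact that intersections of spectrahedral cones are spectrahedral; we may therefore reduce to the case that $G$ is connected. Pick a simplicial clique $K \subseteq V(G)$. By Theorem \ref{indepdiv} we may define the polynomial
$$q_{G,K}(\xx \oplus t) = \frac{I(T^{\boxtimes}(G,K), \xx \oplus t)}{I(G, \xx \oplus t)}.$$
It then suffices to prove (i) $I(T^{\boxtimes}(G,K), \xx \oplus t)$ admits a definite determinantal representation, and (ii) $\Lambda_{++}(I(G, \xx \oplus t)) \subseteq \Lambda_{++}(q_{G,K}(\xx \oplus t))$.

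For (i), Lemma \ref{linecltree} gives $T^{\boxtimes}(G,K) = L(T^{\angle}(G,K))$, so I would first prove the general identity
$$I(L(T), \xx \oplus t) = t^{|V(T)|-2}\, \mu(T, t \one \oplus \xx)$$
for any tree $T$, by identifying independent sets in $L(T)$ with matchings in $T$ and comparing degrees in $t$ using $|E(T)| = |V(T)|-1$. Lemma \ref{treedetrep} (and the remark following it) then provides $\mu(T, t \one \oplus \xx) = \det(tI + A(\xx))$, where $A(\xx)$ is the symmetric weighted adjacency matrix of $T$ with $x_e$ on edge $e$. Multiplying by the padding factor $t^{|V(T)|-2} = \det(t I_{|V(T)|-2})$ exhibits $I(L(T), \xx \oplus t)$ as the determinant of a block-diagonal symmetric pencil that evaluates to the identity at $\ee = (0, \dots, 0, 1)$. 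Applying this with $T = T^{\angle}(G,K)$ and transporting variable labels through the graph homomorphism $\phi_K$ yields the definite determinantal representation of $I(T^{\boxtimes}(G,K), \xx \oplus t)$.

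For (ii), iterate Lemma \ref{hypconedel} over the vertices of $K$ (claw-freeness, hence hyperbolicity via Lemma \ref{indephyp}, is preserved under vertex deletion) to obtain
$$\Lambda_{++}(I(G, \xx \oplus t)) \subseteq \Lambda_{++}(I(G \setminus K, \xx \oplus t)).$$
The Chudnovsky-Seymour lemma ensures each $K_u = N[u] \setminus K$ is a simplicial clique in $G \setminus K$, so the inductive hypothesis applies and yields $\Lambda_{++}(I(G \setminus K, \xx \oplus t)) \subseteq \Lambda_{++}(q_{G \setminus K, K_u}(\xx \oplus t))$ for every $u \in K$. Combined with the recursive decomposition $T^{\boxtimes}(G, K) \setminus K = \bigsqcup_{u \in K} T^{\boxtimes}(G \setminus K, K_u)$, multiplicativity of $I$ on disjoint unions, and Theorem \ref{indepdiv} applied to both $(G,K)$ and each $(G \setminus K, K_u)$, a short calculation produces the factorization
$$q_{G,K}(\xx \oplus t) = I(G \setminus K, \xx \oplus t)^{|K|-1} \prod_{u \in K} q_{G \setminus K, K_u}(\xx \oplus t),$$
from which (ii) follows by intersecting the established containments.

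I expect the main obstacle to be (ii): correctly tracking the relabelling $\phi_K$ through the recursive construction so that the displayed factorization of $q_{G,K}$ is a genuine identity of polynomials in the variables $(\xx, t)$ of $G$, and verifying that the hypotheses of Lemma \ref{hypconedel} are met at each iteration of vertex deletion (in particular, that each intermediate subgraph is claw-free so Lemma \ref{indephyp} applies and the hyperbolicity cones are well defined). This is the analogue of the delicate path-tree bookkeeping in the proof of Theorem \ref{matchconespec}, with the Chudnovsky-Seymour preservation of simplicial cliques playing the role that connectivity played there.
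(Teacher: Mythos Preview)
Your proposal is correct and follows essentially the same route as the paper. Your factorization
\[
q_{G,K} = I(G \setminus K, \xx \oplus t)^{|K|-1} \prod_{u \in K} q_{G \setminus K, K_u}
\]
is exactly the paper's formula (\ref{clfac}) rewritten symmetrically (the paper singles out one $v \in K$ and groups the remaining $|K|-1$ copies of $I(G\setminus K)$ into the product), and your use of Lemma \ref{hypconedel}, the Chudnovsky--Seymour lemma, and Lemma \ref{linecltree} matches the paper step for step. You are in fact slightly more careful than the paper in part (i): the identity $I(L(T), \xx \oplus t) = t^{|V(T)|-2}\mu(T, t\one \oplus \xx)$ does carry that power of $t$, which the paper suppresses; your block-diagonal padding handles it cleanly and the conclusion is unaffected.
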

\begin{proof}
Let $K$ be a simplicial clique of $G$. Arguing by induction as in Theorem \ref{matchconespec}, using the clique tree $T^{\boxtimes}(G,K)$ instead of the path tree $T(G,u)$, and invoking Theorem \ref{indepdiv} we get a factorization 
\begin{align} \label{clfac}
\displaystyle q_{G,K}(\xx \oplus t) = q_{G \setminus K, K_v}(\xx \oplus t) \prod_{w \in K \setminus v} q_{G\setminus K, K_w}(\xx \oplus t) I(G \setminus K, \xx \oplus t),
\end{align} \noindent
where $v \in K$ is fixed, $K_w = N[w]\setminus K$ and 
\begin{align*}
q_{G, K}(\xx \oplus t)I(G, \xx \oplus t) &= I(T^{\boxtimes}(G,K), \xx \oplus t), \\
q_{G \setminus K, K_w}(\xx \oplus t) I(G \setminus K, \xx \oplus t) &= I(T^{\boxtimes}(G \setminus K,K_w), \xx \oplus t)
\end{align*} \noindent
for $w \in K$.
Repeated application of Lemma \ref{hypconedel} gives
$$
\Lambda_{++}(I(G, \xx \oplus t)) \subseteq \Lambda_{++}(I(G \setminus K, \xx \oplus t)).
$$
By the factorization (\ref{clfac}) and induction we hence get the desired cone inclusion 
$$
\Lambda_{++}(I(G, \xx \oplus t)) \subseteq \Lambda_{++}(q_{G,K}(\xx \oplus t)).
$$
Since $L(T^{\angle}(G,K)) \cong T^{\boxtimes}(G,K)$ by Lemma \ref{linecltree} we see that
$$
I(T^{\boxtimes}(G,K), \xx \oplus t) = \mu(T^{\angle}(G,K), t\mathbf{1} \oplus \xx).
$$
Hence $I(T^{\boxtimes}(G,K), \xx \oplus t)$ has a definite determinantal representation by Lemma \ref{treedetrep} proving the theorem.

\end{proof}

\section{Convolutions} \label{sec::conv}
If $G$ is a simple undirected graph with adjacency matrix $A = (a_{ij})$, then we may associate a signing $\ss = (s_{ij}) \in \{\pm 1 \}^{E(G)}$ to its edges. The symmetric adjacency matrix $A^{\ss} = (a_{ij}^s)$ of the resulting graph is given by $a_{ij}^{\ss} = s_{ij} a_{ij}$ for $ij \in E(G)$ and $a_{ij}^{\ss} = 0$ otherwise.
Godsil and Gutman \cite{GG2} proved that
\begin{align} \label{godsilgutman}
\mathop{\mathbb{E}}_{\ss \in \{ \pm 1 \}^{E(G)}} \det \left ( tI - A^{\ss} \right ) = \mu(G,t).
\end{align}
In other words, the expected characteristic polynomial of
an independent random signing of the adjacency matrix of a graph is equal to its matching polynomial. Therefore the expected characteristic polynomial is real-rooted. This was one of the facts used by Marcus, Spielman and Srivastava \cite{MSS1} in proving that there exist infinite families of regular bipartite Ramanujan graphs. Since then, several other families of characteristic polynomials have been identified with real-rooted expectation (see e.g \cite{MSS3},\cite{HDS}). Such families go under the name \textit{interlacing families}, based on the fact that there exists a common root interlacing polynomial if and only if every convex combination of the family is real-rooted. The method of interlacing families have been successfully applied to other contexts, in particular to the affirmative resolution of the Kadison-Singer problem \cite{MSS2}. 

In this section we define a convolution of multivariate determinant polynomials and show that it is hyperbolic as a direct consequence of a more general theorem by Br\"and\'en \cite{B3}. In particular this convolution can be viewed as a generalization of the fact that the expectation in (\ref{godsilgutman}) is real-rooted. Namely, we show that the expected characteristic polynomial over any finite set of independent random edge weightings is real-rooted barring certain adjustments to the weights of the loop edges.  

Recall that every symmetric matrix may be identified with the adjacency matrix of an undirected weighted graph (with loops). 
\begin{definition}
Let $W \subseteq \RR$ be a finite set and let $A = (a_{ij})_{i,j = 1}^n$ be a real symmetric matrix. Define a \textit{weighting} $\ww = (w_{ij})_{i < j} \in W^{\binom{n}{2}}$ of $A$ to be a symmetric matrix $A^{\ww} = (a_{ij}^{\ww})$ given by 
$$
a_{ij}^{\ww} = \begin{cases} w_{ij} a_{ij} & \text{ if } i < j \\ \sum_{k = 1}^i a_{ik} + \sum_{k = i+1}^n w_{ik}^2 a_{ik} & \text{ if } i = j \end{cases}.
$$
\end{definition}
\begin{definition}
Let $X = (x_{ij})_{i,j=1}^n$ and $Y = (y_{ij})_{i,j=1}^n$ be symmetric matrices in variables $\xx = (x_{ij})_{i \leq j}$ and $\yy = (y_{ij})_{i \leq j}$ respectively. Let $W \subseteq \RR$ be a finite set. We define the convolution
\begin{align*}
\displaystyle \det(X) \conv_{W} \det(Y) =  \mathop{\mathbb{E}}_{\ww_1, \ww_2 \in W^{\binom{n}{2}}} \det( X^{\ww_1} + Y^{\ww_2}) \in \mathbb{R}[\xx, \yy].
\end{align*}
\end{definition} \noindent
We have the following general fact about hyperbolic polynomials. 
\begin{theorem}[Br\"and\'en \cite{B3}]
\label{compatiblefamilies}
Let $h(\xx)$ be a hyperbolic polynomial with respect to $\ee \in \mathbb{R}^n$, let $V_1, \dots, V_m$ be finite sets of vectors of rank at most one in $\Lambda_+$. For $\mathbf{V} = (\vv_1, \dots, \vv_m) \in V_1 \times \cdots \times V_m$, let 
\begin{align*}
\displaystyle g(\mathbf{V};t) = h(t\ee + \uu -\alpha_1 \vv_1 - \cdots - \alpha_m \vv_m)
\end{align*} \noindent
where $\uu \in \mathbb{R}^n$ and $(\alpha_1, \dots, \alpha_m) \in \mathbb{R}^{m}$. Then $\displaystyle \mathop{\mathbb{E}}_{\mathbf{V} \in V_1 \times \cdots \times V_m} \thinspace g(\mathbf{V};t)$ is real-rooted.
\end{theorem}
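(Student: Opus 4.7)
The plan is to combine three ingredients.

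\emph{Rank-one linearization.} First I would prove that for $\vv \in \Lambda_+$ of rank at most one (at most one nonzero eigenvalue with respect to $\ee$), the map $y \mapsto h(\xx + y\vv)$ is affine in $y$ for every $\xx \in \RR^n$; equivalently, $D_\vv^k h \equiv 0$ for $k \geq 2$. Working with the polarization $\hat h$ of $h$ (the unique symmetric $d$-linear form with $\hat h(\xx,\ldots,\xx) = h(\xx)$), the factorization $h(s\ee + t\vv) = h(\ee)\prod_j (s + t\lambda_j(\vv))$ matched with the polarization expansion yields $\hat h(\ee^{d-k}, \vv^k) = h(\ee)\, e_k(\lambda(\vv))/\binom{d}{k}$, which vanishes for $k \geq 2$ under the rank-one hypothesis. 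Invariance of rank under change of base direction in $\Lambda_{++}$ upgrades this to $\hat h(\ww^{d-k}, \vv^k) = 0$ for all $\ww \in \Lambda_{++}$, and then for all $\ww$ by polynomial identity. Since a symmetric multilinear form is determined by its diagonal, $\hat h(\vv, \vv, \ww_1, \ldots, \ww_{d-2}) \equiv 0$, and the same argument handles higher $k$.

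\emph{Averaging formula.} Iterating Ingredient~1 gives, for rank-one $\vv_1, \ldots, \vv_m \in \Lambda_+$,
\[
h\Bigl(\xx + \sum_{i=1}^m y_i \vv_i\Bigr) = \prod_{i=1}^m (1 + y_i D_{\vv_i})\, h(\xx),
\]
multi-affine in $(y_1, \ldots, y_m)$. Setting $\xx = t\ee + \uu$ and $y_i = -\alpha_i$, taking expectation over the independent $\vv_i \in V_i$, and using the linearity of $D_{\vv_i}$ in $\vv_i$, one obtains
\[
\mathop{\mathbb{E}}_{\mathbf{V}} g(\mathbf{V}; t) = \prod_{i=1}^m (1 - \alpha_i D_{\bar\vv_i})\, h(\xx)\Big|_{\xx = t\ee + \uu},
\]
where $\bar\vv_i := \mathop{\mathbb{E}}_{\vv_i \in V_i} \vv_i \in \Lambda_+$ (convex combination of elements of $\Lambda_+$).

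\emph{Real-rootedness via stability.} Consider the auxiliary polynomial $P(t, y_1, \ldots, y_m) := h(t\ee + \uu + \sum_i y_i \bar\vv_i)$. Whenever $\Im t, \Im y_i > 0$, the imaginary part of its argument equals $\Im(t)\ee + \sum_i \Im(y_i)\bar\vv_i$, which lies in $\Lambda_{++}$ since $\Im(t)\ee \in \Lambda_{++}$ and the remaining terms lie in $\Lambda_+$; hyperbolicity of $h$ then gives $P \neq 0$, so $P$ is real stable in $(t, \mathbf{y})$. Its multi-affine part in $\mathbf{y}$, obtained from the Taylor expansion by keeping only multi-degrees $\leq 1$ in each $y_i$, is precisely $\prod_i (1 + y_i D_{\bar\vv_i}) h(t\ee + \uu)$. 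Each single-variable projection $\mathrm{MAP}_{y_i}$ can be realized by polarizing $P$ in $y_i$ and then specializing linearly (set $y_i^{(1)} = d_i y_i$ and the other polarized variables to $0$); since polarization (Grace--Walsh--Szeg\H{o}) and specialization at real values both preserve real stability, so does $\mathrm{MAP}_\mathbf{y}$. Hence $\mathrm{MAP}_\mathbf{y}(P)$ is real stable in $(t, \mathbf{y})$, and specializing $y_i = -\alpha_i \in \RR$ preserves stability, yielding the univariate polynomial $\mathop{\mathbb{E}}_\mathbf{V} g(\mathbf{V}; t)$ in $t$, which is therefore real-rooted.

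The hardest step will be Ingredient~1: passing from pointwise vanishing of $\hat h(\ee^{d-k}, \vv^k)$ to identical vanishing of the mixed polarization requires the invariance of rank under change of base direction in $\Lambda_{++}$, a structural feature of hyperbolic polynomials that is transparent in the matrix determinantal case but more delicate in general. A secondary technical point is the justification that the partial multi-affine projection preserves real stability via polarization.
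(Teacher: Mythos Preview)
The paper does not give a proof of this theorem; it is quoted verbatim from Br\"and\'en~\cite{B3} and invoked as a black box in the proof of Proposition~\ref{convhyp}. There is therefore no ``paper's own proof'' to compare against.

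Your proposal is correct and is essentially the argument of the cited reference. The three ingredients line up with the standard proof: (1) for $\vv \in \Lambda_+$ of rank at most one, $D_\vv^2 h \equiv 0$, hence $h(\xx + y\vv)$ is affine in $y$; (2) iterating gives $h\bigl(\xx + \sum_i y_i\vv_i\bigr) = \prod_i (1 + y_i D_{\vv_i}) h(\xx)$, and since this is multilinear in the $\vv_i$ the expectation over independent choices replaces each $\vv_i$ by its mean $\bar\vv_i \in \Lambda_+$; (3) the resulting expression is the multi-affine part (in $\yy$) of the real stable polynomial $h\bigl(t\ee + \uu + \sum_i y_i \bar\vv_i\bigr)$, and the partial $\mathrm{MAP}$ followed by real specialization $y_i = -\alpha_i$ preserves stability. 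Your realization of single-variable $\mathrm{MAP}$ via polarize-then-specialize is the right justification, and your identification of the rank-invariance step (that rank with respect to $\ee$ coincides with rank with respect to any $\ee' \in \Lambda_{++}$, equivalently that the multiplicity of $\vv$ as a zero of $h$ is intrinsic) as the one genuinely nontrivial structural input is accurate.
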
 \noindent
\begin{proposition} \label{convhyp}
Let $W \subseteq \mathbb{R}$ be a finite subset. Then $\det(X) \conv_W \det(Y)$ is hyperbolic with respect to $\ee = I \oplus \mathbf{0}$ where $I$ denotes the identity matrix.
\end{proposition}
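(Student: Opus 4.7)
The plan is to cast the univariate slice of $\det(X)\conv_W\det(Y)$ in the direction $\ee = I\oplus\mathbf{0}$ into the framework of Theorem \ref{compatiblefamilies}. The starting point is a rank-one decomposition of the weighting map: for any symmetric matrix $A = (a_{ij})$ and any weighting $\ww = (w_{ij})_{i<j}$,
$$
A^{\ww} = D(A) + \sum_{i<j} a_{ij}\, p_{ij}(w_{ij})\, p_{ij}(w_{ij})^T,
$$
where $D(A)$ is the diagonal part of $A$ and $p_{ij}(w)\in\RR^n$ has $w$ in position $i$, $1$ in position $j$, and $0$ elsewhere. The identity is immediate off the diagonal; on the $(k,k)$-entry the right-hand side yields $a_{kk} + \sum_{j>k} w_{kj}^2 a_{kj} + \sum_{i<k} a_{ik}$, which equals the prescribed value $\sum_{i=1}^k a_{ki} + \sum_{i>k} w_{ki}^2 a_{ki}$ via the symmetry $a_{ik}=a_{ki}$. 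Each summand $p_{ij}(w)p_{ij}(w)^T$ is thus a rank-one positive semidefinite matrix, hence a rank-one element of the PSD cone $\Lambda_{+}(\det)$.

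Substituting $X = tI-X_0$ and $Y = -Y_0$ and applying this decomposition to both $(tI-X_0)^{\ww_1}$ and $(-Y_0)^{\ww_2}$ gives
$$
(tI-X_0)^{\ww_1}+(-Y_0)^{\ww_2} = tI + \bigl(-D(X_0)-D(Y_0)\bigr) - \sum_{i<j}\Bigl[(X_0)_{ij}\, p_{ij}(w^{(1)}_{ij})p_{ij}(w^{(1)}_{ij})^T + (Y_0)_{ij}\, p_{ij}(w^{(2)}_{ij})p_{ij}(w^{(2)}_{ij})^T\Bigr].
$$
This is precisely the template of Theorem \ref{compatiblefamilies} applied to the hyperbolic polynomial $h = \det$ on the space of $n\times n$ real symmetric matrices (hyperbolic with respect to $I$, with $\Lambda_+$ equal to the PSD cone), with fixed shift $\uu = -D(X_0)-D(Y_0)$, scalar coefficients $\alpha$ equal to the matrix entries $(X_0)_{ij}$ and $(Y_0)_{ij}$, and, for each pair $i<j$, two independent finite families of rank-one PSD matrices
$$
V_{ij}^{(s)} = \{\, p_{ij}(w)p_{ij}(w)^T : w\in W\,\}, \qquad s=1,2.
$$
Averaging $\det\bigl((tI-X_0)^{\ww_1}+(-Y_0)^{\ww_2}\bigr)$ over the product of these families is exactly $\det(X)\conv_W\det(Y)$ evaluated at $(tI-X_0)\oplus(-Y_0) = t\ee-(X_0\oplus Y_0)$, so Theorem \ref{compatiblefamilies} produces real-rootedness in $t$.

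It remains to check that $\ee = I\oplus\mathbf{0}$ is a valid direction, i.e.\ that $\det(X)\conv_W\det(Y)$ does not vanish at $\ee$. Plugging in $X=I$, $Y=0$: all off-diagonal entries of $I$ vanish, so $I^{\ww_1}=I$ for every weighting (each diagonal entry collapses to $1$), and likewise $0^{\ww_2}=0$. Hence every term in the expectation equals $\det(I)=1$, and so does the expectation itself. Homogeneity of degree $n$ in $\xx\oplus\yy$ is immediate from linearity of $A\mapsto A^{\ww}$ in $A$, so all hypotheses of hyperbolicity are met. The main obstacle is essentially the book-keeping in the rank-one decomposition: the seemingly asymmetric diagonal formula in the definition of $A^{\ww}$ is tuned precisely so that the whole deformation of $D(A)$ splits as a sum of rank-one PSD perturbations with no other residue, after which Br\"and\'en's theorem delivers the conclusion at once.
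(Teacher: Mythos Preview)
Your proof is correct and follows essentially the same route as the paper: both reduce to Theorem~\ref{compatiblefamilies} by writing $A^{\ww}$ as a diagonal part plus a sum of rank-one positive semidefinite matrices indexed by $\{i<j\}$, and then averaging over the finite families $\{p_{ij}(w)p_{ij}(w)^T:w\in W\}$. Your treatment is in fact slightly more careful than the paper's, since you explicitly carry the diagonal shift $\uu=-D(X_0)-D(Y_0)$ and choose $p_{ij}(w)=w\delta_i+\delta_j$ so that the decomposition matches the stated diagonal formula exactly.
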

\begin{proof}
Let $h(X \oplus Y) = \det(X) \conv_{W} \det(Y)$.
We note that $h(\ee) = 1 \neq 0$.
Let $\delta_1, \dots, \delta_n$ denote the standard basis of $\mathbb{R}^n$. Put 
$$
V_{ij} = \{ \vv_{ijw} : w \in W \}
$$ 
where $\vv_{ijw} =  (\delta_{i} + w \delta_j) (\delta_{i} + w \delta_j)^{T}$ for $i < j$ and $w \in W$. Note that $\vv_{ijw}$ is a rank one matrix belonging to the hyperbolicity cone of positive semidefinite matrices (with non-zero eigenvalue $w^2+1$). Letting $\uu = \mathbf{0}$ and $\alpha_{ij}^X = x_{ij}, \alpha_{ij}^Y = y_{ij}$ for $i < j$ we see that
\begin{align*}
\displaystyle h(t \ee - X \oplus Y) &= \mathop{\mathbb{E}}_{\ww_1, \ww_2} \det(tI - X^{\ww_1} - Y^{\ww_2}) \\ &= \mathop{\mathbb{E}}_{\substack{\vv_{ijw_1}, \vv_{ijw_2} \in V_{ij} \\ i < j}} \det \left (tI + \uu - \sum_{i < j} (\alpha_{ij}^X \vv_{ijw_1} +  \alpha_{ij}^Y \vv_{ijw_2}) \right ),
\end{align*} \noindent
where the right hand side is a real-rooted polynomial in $t$ by Theorem \ref{compatiblefamilies}. Hence $\det(X) \conv_W \det(Y)$ is hyperbolic with respect to $\ee$.

\end{proof}
\begin{remark} \label{diagrmk}
Taking $W = \{ \pm 1 \}$ we see that the diagonal adjustment in the weighting is constant. Therefore setting $\uu = \text{diag}(d_1, \dots, d_n)$ where $d_i = \sum_{j \neq i} (x_{ij} + y_{ij})$ in the proof of Corollary \ref{convhyp}, we get that
\begin{align} \label{sgnconv}
\displaystyle \mathop{\mathbb{E}}_{\ss_1, \ss_2} \det(X^{\ss_1} + Y^{\ss_2})
\end{align} \noindent
is hyperbolic, where the expectation is taken over independent random signings of the matrices $X$ and $Y$ as in (\ref{godsilgutman}) without diagonal adjustment. This shows in particular that the expectation in (\ref{godsilgutman}) is real-rooted.
\end{remark}
\begin{corollary}
Let $W \subseteq \RR$ be a finite subset and $A$ a real symmetric $n \times n$ matrix. Then
\begin{align*} 
\displaystyle \mathop{\mathbb{E}}_{\ww \in W^{\binom{n}{2}}} \det(tI - A^{\ww})
\end{align*} \noindent
is real-rooted.
\end{corollary}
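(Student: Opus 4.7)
The plan is to deduce this corollary by specializing Proposition \ref{convhyp} from a two-matrix convolution to a single-matrix expectation, and then matching the weighting operation against the substitution $X = tI - A$.

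First, I would restrict the hyperbolic polynomial $h(X \oplus Y) = \det(X) \conv_{W} \det(Y)$ to the linear subspace $\{Y = \mathbf{0}\}$. Since $Y = \mathbf{0}$ gives $Y^{\ww_2} = \mathbf{0}$ for every weighting $\ww_2$ (both the off-diagonal entries $w_{ij} \cdot 0$ and the diagonal adjustments vanish), the restriction equals
\begin{align*}
p(X) := h(X \oplus \mathbf{0}) = \mathop{\mathbb{E}}_{\ww \in W^{\binom{n}{2}}} \det(X^{\ww}).
\end{align*}
By Proposition \ref{convhyp}, $h$ is hyperbolic with respect to $\ee = I \oplus \mathbf{0}$. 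Restriction to a subspace containing the reference vector preserves hyperbolicity: $p(I) = h(\ee) = 1 \neq 0$, and for any symmetric $X$ the univariate polynomial $t \mapsto p(tI - X) = h(t\ee - (X \oplus \mathbf{0}))$ is real-rooted. Hence $p$ is hyperbolic with respect to $I$.

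Next, I would verify the entry-wise identity $(tI - A)^{\ww} = tI - A^{\ww}$ by unpacking the weighting definition. For $i < j$, both sides have $(i,j)$-entry equal to $-w_{ij} a_{ij}$. For the diagonal, both sides reduce to
\begin{align*}
t - a_{ii} - \sum_{k < i} a_{ik} - \sum_{k > i} w_{ik}^2 a_{ik}.
\end{align*}
This is routine bookkeeping against the diagonal adjustment in the definition of $A^{\ww}$.

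Finally, combining the two ingredients,
\begin{align*}
\mathop{\mathbb{E}}_{\ww} \det(tI - A^{\ww}) = \mathop{\mathbb{E}}_{\ww} \det\bigl((tI - A)^{\ww}\bigr) = p(tI - A),
\end{align*}
which is real-rooted in $t$ by hyperbolicity of $p$ with respect to $I$. The only non-trivial input is Proposition \ref{convhyp}; the main obstacle, to the extent there is one, is the entry-wise check that the weighting of $tI - A$ coincides with $tI - A^{\ww}$, which needs some care because the diagonal adjustment in the weighting is not linear in the entries of $A$. This fits the pattern already used in Remark \ref{diagrmk}, where the authors specialize to $W = \{\pm 1\}$ to recover real-rootedness of the Godsil–Gutman expectation.
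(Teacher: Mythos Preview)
Your argument is correct and essentially matches the paper's one-line proof: both set one of the two matrices in $\det(X)\conv_W\det(Y)$ to zero and read off real-rootedness from the hyperbolicity established in Proposition~\ref{convhyp}. The paper substitutes $X=\mathbf{0}$, $Y=A$ directly into $t\mapsto h(t\ee-X\oplus Y)$, while you first restrict to $Y=\mathbf{0}$ and then evaluate at $X=A$; these are the same move in a different order.

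One small remark: your caution that ``the diagonal adjustment in the weighting is not linear in the entries of $A$'' is misplaced. For fixed $\ww$ the map $A\mapsto A^{\ww}$ is linear (each entry of $A^{\ww}$ is a linear combination of entries of $A$), and moreover $I^{\ww}=I$ since the off-diagonal entries of $I$ vanish. Hence $(tI-A)^{\ww}=tI^{\ww}-A^{\ww}=tI-A^{\ww}$ immediately, and your entrywise verification, while correct, is unnecessary.
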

\begin{proof}
By Corollary \ref{convhyp} the polynomial $\det(Y) \conv_W \det(X)$ is hyperbolic, so in particular $\displaystyle t \mapsto \mathop{\mathbb{E}}_{\ww} \det(tI - A^{\ww})$ is real-rooted with $X = \mathbf{0}$ and $Y = A$.

\end{proof} \noindent
Next we see that the convolution (\ref{sgnconv}) over independent random signings can be realized as a convolution of multivariate matching polynomials. The proof is similar to that of the univariate identity (\ref{godsilgutman}) (cf \cite{GG2}). Let $G_X$ and $G_Y$ denote the weighted graphs corresponding to the symmetric matrices $X$ and $Y$. 
\begin{proposition}
\label{boxplus} 
Let $X = (x_{ij})_{i,j = 1}^n$ and $Y = (y_{ij})_{i,j = 1}^n$ be symmetric matrices in variables $\xx = (x_{ij})_{i \leq j}$ and $\yy = (y_{ij})_{i \leq j}$. Then
\begin{align*}
\displaystyle &\mathop{\mathbb{E}}_{\ss^{(1)}, \ss^{(2)}} \det(X^{\ss^{(1)}} + Y^{\ss^{(2)}}) \\ &= \sum_{S \subseteq [n]} (-1)^{|S|/2} \prod_{i \not \in S} (x_{ii} + y_{ii}) \sum_{S_1 \sqcup S_2 = S} \mu(G_X[S_1], \mathbf{0} \oplus \mathbf{x}) \mu(G_Y[S_2], \mathbf{0} \oplus \mathbf{y})
\end{align*} \noindent
where the expectation is taken over independent random signings as in (\ref{godsilgutman}). 
\end{proposition}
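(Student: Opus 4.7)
The approach is a direct generalization of the classical Godsil--Gutman derivation of (\ref{godsilgutman}): expand the determinant by the Leibniz formula, distribute the off-diagonal sums $s^{(1)}_{\{i,\sigma(i)\}}x_{i\sigma(i)}+s^{(2)}_{\{i,\sigma(i)\}}y_{i\sigma(i)}$ into a choice of ``$X$'' versus ``$Y$'' at each factor, and exploit the vanishing of odd moments of independent Rademacher variables.

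I would begin by writing
\begin{align*}
\det(X^{\ss^{(1)}}+Y^{\ss^{(2)}}) = \sum_{\sigma\in\mathfrak{S}_n}\sgn(\sigma)\prod_{i:\sigma(i)=i}(x_{ii}+y_{ii})\prod_{i:\sigma(i)\neq i}\bigl(s^{(1)}_{\{i,\sigma(i)\}}x_{i\sigma(i)}+s^{(2)}_{\{i,\sigma(i)\}}y_{i\sigma(i)}\bigr),
\end{align*}
and introducing a label $c(i)\in\{X,Y\}$ at each non-fixed index $i$ to expand the last product. Taking expectations and using $\mathbb{E}[s^{(k)}_{ij}]=0$, $\mathbb{E}[(s^{(k)}_{ij})^2]=1$ together with independence of the two signings, the only pairs $(\sigma,c)$ that survive are those in which every signing variable appears to an even power.

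A short combinatorial analysis then shows that this happens precisely when $\sigma$ is an involution (so its non-fixed indices form a perfect matching $M$ on $[n]\setminus F$, where $F$ is the fixed-point set) \emph{and} the label $c$ is constant on each $2$-cycle of $\sigma$. Indeed, any cycle of length $\geq 3$ contributes at least one signing variable to the first power, and any $2$-cycle whose two endpoints receive different labels contributes both $s^{(1)}_{\{i,j\}}$ and $s^{(2)}_{\{i,j\}}$ to the first power; both situations vanish in expectation. Writing $M=M_X\sqcup M_Y$ according to the common label on each $2$-cycle, and using $\sgn(\sigma)=(-1)^{|M|}$, each surviving configuration contributes
\begin{align*}
(-1)^{|M_X|+|M_Y|}\prod_{i\in F}(x_{ii}+y_{ii})\prod_{\{i,j\}\in M_X}x_{ij}^2\prod_{\{i,j\}\in M_Y}y_{ij}^2.
\end{align*}

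Finally, I would regroup the sum by setting $S_1=V(M_X)$, $S_2=V(M_Y)$ and $S=S_1\sqcup S_2$. Because the vertex variables are set to zero, only the perfect matchings of $S_1$ (resp.\ $S_2$) contribute to $\mu(G_X[S_1],\mathbf{0}\oplus\mathbf{x})$ (resp.\ $\mu(G_Y[S_2],\mathbf{0}\oplus\mathbf{y}))$. Since $|M_X|=|S_1|/2$ and $|M_Y|=|S_2|/2$, the Leibniz sign $(-1)^{|M_X|+|M_Y|}$ combines with the intrinsic signs $(-1)^{|M_X|}$, $(-1)^{|M_Y|}$ built into the definition of the matching polynomial to produce the factor $(-1)^{|S|/2}$ displayed in the proposition, and summation over $S_1\sqcup S_2=S$ then reproduces the right-hand side. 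The main technical point I expect is the Rademacher-moment bookkeeping that forces $\sigma$ to be an involution with a label constant on every $2$-cycle; once this is pinned down, what remains is purely algebraic reorganisation and translation of the perfect-matching sums into the notation $\mu(G_X[S_1],\mathbf{0}\oplus\mathbf{x})$.
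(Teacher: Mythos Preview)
Your proposal is correct and follows essentially the same approach as the paper: Leibniz expansion, separation of fixed points, expansion of each off-diagonal factor into an $X$-part and a $Y$-part, and elimination via Rademacher moments of all permutations except fixed-point-free involutions whose $2$-cycles carry a single label. The only cosmetic difference is that the paper first fixes the partition $S_1\sqcup S_2=S$ and then argues that $\sigma$ must factor as $\sigma_1\sigma_2$ with each $\sigma_i$ a fixed-point-free involution on $S_i$, whereas you introduce the labelling $c$ first and deduce the involution-with-constant-label condition; these are the same argument in different order.
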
  
\begin{proof}
Expanding the convolution from the definition of the determinant we have
\begin{align*}
\displaystyle  &\mathop{\mathbb{E}}_{\ss^{(1)}, \ss^{(2)}} \det( X^{\ss^{(1)}} + Y^{\ss^{(2)}}) \\  &= \mathop{\mathbb{E}}_{\ss^{(1)}, \ss^{(2)}} \sum_{\sigma \in \mathfrak{S}_{n}} \text{sgn}(\sigma) \prod_{i = 1}^{n} \left (X^{\ss^{(1)}} + Y^{\ss^{(2)}} \right )_{i \sigma(i)} \\ & = \mathop{\mathbb{E}}_{\ss^{(1)}, \ss^{(2)}} \sum_{S \subseteq [n]} \prod_{i \not \in S} (x_{ii} + y_{ii}) \sum_{\substack{\sigma \in \text{Sym}(S) \\ \sigma(j) \neq j \thickspace \forall j \in S}} \text{sgn}(\sigma) \prod_{j \in  S} \left (  s_{j \sigma(j)}^{(1)}  x_{j \sigma(j)} + s_{j \sigma(j)}^{(2)} y_{j \sigma(j)} \right ) \\ &= 
\sum_{S \subseteq [n]} \prod_{i \not \in S} (x_{ii} + y_{ii}) \sum_{\substack{\sigma \in \text{Sym}(S) \\ \sigma(j) \neq j \thickspace \forall j \in S}} \text{sgn}(\sigma) \sum_{S_1 \sqcup S_2 = S} \mathop{\mathbb{E}}_{\ss^{(1)}} \prod_{j \in  S_1} s_{j \sigma(j)}^{(1)} x_{j \sigma(j)} \mathop{\mathbb{E}}_{\ss^{(2)}} \prod_{j \in  S_2} s_{j \sigma(j) }^{(2)} y_{j \sigma(j)}
\end{align*} \noindent
Note the following regarding the random variables $s_{ij}^{(k)}$, $k = 1,2$:
\begin{enumerate}
\item $s_{ij}^{(k)}$ appears with power at most two in each of the products.
\item The random variables $s_{ij}^{(k)}$ are independent.
\item $\mathop{\mathbb{E}} s_{ij}^{(k)}  = 0$.
\item $\mathop{\mathbb{E}} (s_{ij}^{(k)})^2 = 1$.
\end{enumerate} \noindent
As a consequence, permutations with the following characteristics may be eliminated since they produce factors $s_{ij}^{(k)}$ of power one making the term vanish:
\begin{enumerate}
\item $\sigma \in \mathfrak{S}_n$ having no factorization $\sigma = \sigma_1 \sigma_2$ for $\sigma_i \in \text{Sym}(S_i)$, $i = 1,2$.
\item $\sigma \in \mathfrak{S}_n$ such that $\sigma$ is not a complete product of disjoint transpositions. 
\end{enumerate} \noindent
This leaves us with products of fixed-point-free involutions in $\text{Sym}(S_1)$ and $\text{Sym}(S_2)$. Thus the non-vanishing terms are those corresponding to perfect matchings on $G_X[S_1]$ and $G_Y[S_2]$. Hence 
\begin{align*}
\displaystyle \mathop{\mathbb{E}}_{\ss^{(1)}, \ss^{(2)}} \det( X^{\ss^{(1)}} + Y^{\ss^{(2)}}) = \sum_{S \subseteq [n]} \prod_{i \not \in S} (x_{ii} + y_{ii}) \sum_{S_1 \sqcup S_2 = S} P_1(\xx) P_2(\yy)
\end{align*} \noindent
where
\begin{align*}
\displaystyle P_1(\xx) &= \sum_{\substack{ \sigma_1 \in \text{Sym}(S_1) \\ \sigma_1(j) \neq j \thickspace \forall j \in S_1}} \text{sgn}(\sigma_1) \mathop{\mathbb{E}}_{\ss^{(1)}} \prod_{i \in S_1} s_{i \sigma_1(i) }^{(1)} x_{i \sigma_1(i)} \\ &= \sum_{ \substack{ M \in \mathcal{M}(G_X[S_1]) \\ M \text{ perfect }}} (-1)^{|S_1|/2} \prod_{ij \in M} \mathop{\mathbb{E}}_{\ss^{(1)}} (s_{ i j }^{(1)})^2 x_{i j}^2 \\ &= (-1)^{|S_1|/2} \sum_{\substack{ M \in \mathcal{M}(G_X[S_1]) \\ M \text{ perfect }}} \prod_{ij \in M} x_{ij}^2 \\ &= (-1)^{|S_1|/2} \mu(G_X[S_1], \mathbf{0} \oplus \mathbf{x})
\end{align*} \noindent
and similarly for $P_2(\yy)$.

\end{proof}  \noindent
\begin{remark}
The expression in Proposition \ref{boxplus} may also be written
\begin{align*}
\displaystyle \displaystyle \mathop{\mathbb{E}}_{\ss^{(1)}, \ss^{(2)}} \det( X^{\ss^{(1)}} + Y^{\ss^{(2)}}) = \sum_{M \in \mathcal{M}(K_n)} (-1)^{|M|} \prod_{i \not \in V(M)} (x_{ii} + y_{ii}) \prod_{jk \in M} (x_{jk}^2 + y_{jk}^2).
\end{align*} \noindent
\end{remark}
\begin{example} \noindent
\begin{enumerate}
\item Let $A$ be the adjacency matrix of a simple undirected graph $G$. Under the specialization $X = tI$ and $Y = -A$ in Proposition \ref{boxplus} we recover the identity (\ref{godsilgutman}) of Godsil and Gutman.
\item Let $A$ and $B$ both be adjacency matrices of the complete graph $K_n$. It is well-known (see e.g. \cite{God}) that the number of perfect matchings in $K_n$ is given by $(n-1)!!$ if $n$ is even and $0$ otherwise, where $(n)!! = n(n-2)(n-4)\cdots$. 
By Proposition \ref{boxplus} and a simple calculation it follows that
\begin{align*}
\displaystyle \mathop{\mathbb{E}}_{\ss^{(1)}, \ss^{(2)}} \det( tI + A^{\ss^{(1)}} + B^{\ss^{(2)}}) &= \sum_{k = 0}^{\lfloor n/2 \rfloor} t^{n-2k} (-1)^k \binom{n}{2k} \sum_{i+j = k} \binom{2k}{2i} (2i-1)!!(2j-1)!! \\ &= \sum_{k = 0}^{\lfloor n/2 \rfloor} t^{n-2k} (-1)^k \binom{n}{2k} (2k-1)!! \left ( \frac{3}{2} \right )^k \\ &= t^n \mu_{\frac{3}{2} \mathbf{1}}(K_n, t^{-1}\mathbf{1}).
\end{align*} \noindent
\end{enumerate}
\end{example} \noindent
\section*{Final remarks}
In Theorem \ref{indepspec} we proved Conjecture \ref{glc} for $I(G, \xx \oplus t)$ whenever $G$ is a simplicial graph. An extension of the divisibility relation in Theorem \ref{indepdiv} to all claw-free graphs would immediately extend Theorem \ref{indepspec} to all claw-free graphs. 

An interesting extension of this work would be to study a family of stable graph polynomials introduced by Wagner \cite{Wag} in a general effort to prove Heilmann-Lieb type theorems. Let $G = (V,E)$ be a graph. For $H \subseteq E$, let $\deg_H: V \to \mathbb{N}$ denote the degree function of the subgraph $(V,H)$. Furthermore let 
\begin{align*}
\displaystyle \uu^{(v)} = (u_0^{(v)}, u_1^{(v)}, \dots, u_d^{(v)})
\end{align*} \noindent
denote a sequence of \textit{activities} at each vertex $v \in V$ where $d = \deg_G(v)$. 
Define the polynomial
\begin{align*}
\displaystyle Z(G, \bl, \uu; \xx ) = \sum_{H \subseteq E} (-1)^{|H|} \bl^H \uu_{\deg_H} \xx^{\deg_H}
\end{align*} \noindent
where $\bl = \{\lambda_e\}_{e \in  E}$ are edge weights and 
\begin{align*}
\displaystyle \bl^H = \prod_{e \in H} \lambda_e, \hspace{0.3cm} \uu_{\deg_H} = \prod_{v \in V} u_{\deg_H(v)}^{(v)}, \hspace{0.3cm} \xx^{\deg_H} = \prod_{v \in .V} x_v^{\deg_H(v)}.
\end{align*} \noindent
Wagner proves that $Z(G, \boldsymbol{\lambda}, \uu, \xx)$ is stable whenever $\lambda_e \geq 0$ for all $e \in E$ and the univariate \textit{key-polynomial} $K_v(z) = \sum_{j=0}^d \binom{d}{j} u_j^{(v)} z^j$ is real-rooted for all $v \in V$ (cf \cite[Thm 3.2]{Wag}). We note in particular that if $u_0^{(v)} = u_1^{(v)} = 1$, $u_k^{(v)} = 0$ for all $k > 1$ and $\thickspace v \in V$, then $Z(G, \bl, \uu; \xx) = \mu_{\bl}(G,\xx)$ where $\mu_{\bl}(G,\xx)$ is the weighted multivariate matching polynomial studied by Heilmann and Lieb \cite{HL}. An appropriate homogenization of $Z(G, \bl, \uu; \xx)$ could be defined as
$$
W(G, \mathbf{u}; \xx \oplus \ww) = \sum_{H \subseteq E} (-1)^{|H|} \uu_{\deg_H} \ww^{2H} \xx^{\deg_G - \deg_H}.
$$ 
Since $W(G, \mathbf{u}; \xx \oplus \ww) = \xx^{\deg_G} Z(G, \ww^2, \uu; \xx^{-1} )$ we see that $W(G, \mathbf{u}; \xx \oplus \ww)$ is hyperbolic with respect to $\ee = \mathbf{1} \oplus \mathbf{0}$ whenever $K_v(z)$ is real-rooted for all $v \in  V$. We also note the following edge and node recurrences for $e \in E$ and $v \in V$,
\begin{align*}
&W(G, \uu; \xx \oplus \ww ) \\ &= \xx^e W(G \setminus e, \uu; \xx \oplus \ww) - \ww^{2e} W(G \setminus e, \uu \ll e; \xx \oplus \ww) \\
&= \sum_{S \subseteq N(v)} (-1)^{|S|} u_{|S|}^{(v)} \ww^{2E(S,v)} x_v^{\deg_G(v) - |S|}\xx^{N(v) \setminus S}W(G \setminus v,\uu \ll S; \xx \oplus \ww)
\end{align*} \noindent
where $E(S,v) = \{ sv \in E : s \in S \}$ and $\displaystyle (\uu \ll S)^{(v)} = \begin{cases} (u_1^{(v)}, \dots, u_d^{(v)}), & v \in S \\ \uu^{(v)}, & v \not \in S \end{cases}$

Although it is not clear in general how to find a definite determinantal representation of $W(G, \mathbf{u}; \xx \oplus \ww)$, it may be possible to consider special form activity vectors and obtain a reduction by constructing divisibility relations in the spirit of Lemma \ref{matchquot} and Theorem \ref{indepdiv}. This may also be of independent interest for studying root bounds of their univariate specializations. \noindent \newline \newline
\textbf{Acknowledgements.} The author is grateful to Petter Br\"and\'en.

\end{document}